\numberwithin{equation}{section}
\newtheorem{theorem}[equation]{Theorem}
\newtheorem{proposition}[equation]{Proposition}
\newtheorem{prop}[equation]{Proposition}
\newtheorem{lem}[equation]{Lemma}
\newtheorem{lemma}[equation]{Lemma}
\newtheorem{corollary}[equation]{Corollary}
\newtheorem{cor}[equation]{Corollary}
\theoremstyle{remark}
\theoremstyle{definition}
\newtheorem{definition}[equation]{Definition}
\newtheorem{observation}[equation]{Observation}
\newtheorem{example}[equation]{Example}
\newcommand\res{\mathop{\hbox{\vrule height 7pt width .5pt depth 0pt
\vrule height .5pt width 6pt depth 0pt}}\nolimits}
\def\XXint#1#2#3{{\setbox0=\hbox{$#1{#2#3}{\int}$} 
	\vcenter{\hbox{$#2#3$}}\kern-.5\wd0}}
\newcommand{\Hc}{\mathcal{H}}
\newcommand{\Lc}{\mathcal{L}}
\newcommand{\R}{\mathbb R}
\newcommand{\G}{{\mathbb G}}
\def\eps{\epsilon}
\newcommand{\vol}{\operatorname{vol}}
\newcommand{\uno}{{\mathbb 1}}
\begin{document}

\title[Sets with constant  normal in the Engel group]{
Regularity of sets with constant horizontal normal in the Engel group}

\author{Costante Bellettini}
\address{Princeton University\\
Fine Hall, Washington Road,
Princeton, NJ 08544-1000, USA and Institute for Advanced Study\\
Einstein Drive, Princeton, NJ 08540 USA}
\email{cbellett@math.princeton.edu}

\author{Enrico Le Donne}
\address{Mathematical Sciences Research Institute\\
17 Gauss Way,
Berkeley, CA 94720-5070,
 USA and Laboratoire de Math\'ematiques\\
B\^atiment 425, Universit\'e Paris Sud 11\\
91405 Orsay\\
France}
\email{ledonne@msri.org}


\renewcommand{\subjclassname}{%
\textup{2010} Mathematics Subject Classification}

\date{January 30, 2012}

\begin{abstract}
In the Engel group with its Carnot group structure we study subsets of locally finite subRiemannian perimeter and possessing constant subRiemannian normal.

We prove the rectifiability of such sets: more precisely we show that, in some specific coordinates, they are upper-graphs of entire Lipschitz functions (with respect to the Euclidean distance).
However we find that, when they are written as intrinsic upper-graphs with respect to the direction of the normal, then the function defining the set might even fail to be continuous. Nevertheless, we can prove that one can always find other horizontal directions for which the set is the upper-graph of a function that is Lipschitz-continuous with respect to the intrinsic distance (and in particular H\"older-continuous for the Euclidean distance).

We further discuss a PDE characterization of the class of all sets with constant horizontal normal.

Finally, we show that our rectifiability argument extends to the case of filiform groups of the first kind.
\end{abstract}

\maketitle

\setcounter{tocdepth}{2}
\tableofcontents

\section{\textbf{Introduction}}

Recent years have witnessed an increasing interest in Geometric Analysis  of
Metric Spaces. A particular role has been played by the class of Carnot groups endowed with   subRiemannian distances.
In this setting, both translations and dilations are present, hence the  theory of differentiation   generalizes. Many notions from Analysis and Geometry have been investigated in subRiemannian Carnot groups.
Function Theory has been  a fruitful study. There have been several fundamental results in the study of maps such as Lipschitz, Sobolev, quasiconformal, and bounded variation.
Another subject of large interest has been Geometric Measure Theory.
Minimal surfaces, sets with finite perimeter, currents, and  rectifiable sets 
have  received particular attention.
As a source of reference, we point out to \cite{Capogna-et-al, Vittone-tesi, LeDonne-lectures},
 and the references therein.

In this paper we intend to contribute to the study of the regularity of some particular class of hypersurfaces in a specific Carnot group, namely the Engel Lie group, which is the lower-dimensional Carnot group of step $3$. 
It is the simplest example for which there is lack of rectifiability results. We consider sets that are the generalization of half-spaces, i.e., they have constant horizontal normal. 
Such sets are doubly important: they appear as tangents of sets with locally-finite horizontal perimeter, 
and   their boundaries  are examples of minimal hyper-surfaces that can be written as entire graphs with respect to the group structure.
For such reasons, it would be fundamental to understand whether such sets are, in some sense, rectifiable.

We shall give an ambivalent study of the regularity of such constant normal sets.
Briefly, we will show that such sets are rectifiable (even in the Euclidean sense) but not necessarily intrinsic Lipschitz graphs in the direction of the normal. 
In fact, there is an example of a set with constant horizontal normal such that, if one writes the set as a upper-graph in the direction of the normal, the function giving the graph is not continuous. These sets are however intrinsic Lipschitz graphs in other horizontal directions. 

Such (horizontal) intrinsic graphs have natural parametrizations and have been extendedly considered in the theory of rectifiable subsets of Carnot groups, as  examples in \cite{Kirchheim-SerraCassano, 
FSSC2006, ASCV, Barone-Adesi-Serra-Cassano-Vittone, 
MSCV,
Bigolin-Vittone,Bigolin-SerraCassano1,Bigolin-SerraCassano2,FSSC2011}.  
It is not clear whether a rectifiability result can be expected to hold for sets with constant horizontal normal in general Carnot groups. 
Nor we have examples of finite-perimeter sets 
for which it fails that, at almost every point of the boundary, the tangent is a half-space.

\subsection{Main setting, terminology, and previous contributions}
Let $\G$ be a Carnot group (see \cite{AKL} for definitions). Let $\mathfrak g$ be the Lie algebra of the left-invariant vector fields in $\G$. By definition, $\mathfrak g$  is stratified. We denote by $V_1$ the first stratum (also known as horizontal layer).

A subset $E$ of a Carnot group is said to have locally finite horizontal perimeter if, for any 
 $X \in V_1$, the distribution $X\uno_E$ is a Radon measure.
Caccioppoli and De Giorgi introduced these sets (in the Euclidean space)  for the study of minimal hyper-surfaces. 
The reason for doing so is the good behavior  of the perimeter, which is the total mass of the vector-valued measure whose components are obtained by differentiating $\uno_E$ in the  directions of a fixed basis of $V_1$.
In fact, the perimeter is lower semicontinuous and induces a locally compact topology on the class of finite-perimeter sets. Hence, it becomes easy to show existence of minimal surfaces.

A set $E$ in a Carnot group $\G$ is said to have constant horizontal normal if there exists a horizontal left-invariant vector field $X$ on $\G$ and there exists a decomposition $\R X \oplus V_1^\dagger$  of $V_1$ 
 with the following property:
\begin{itemize}
\item 
the distributional derivative $X\uno_E$ of the characteristic function $\uno_E$ of $E$ in the direction of $X$ is a positive Radon measure;
\item for all $Y\in V_1^\dagger$, the distribution    $Y\uno_E$ vanishes.
\end{itemize}

One should notice that the space $V_1^\dagger$ is uniquely defined by $E$, unlike the vector $X$. However, if we fix a scalar product on $V_1$ and require that $X$ is a unit vector orthogonal to $V_1^\dagger$, then $X$ is unique and it is called {\em the} normal of $E$.

In \cite{fssc}, the three authors extended a result of De Giorgi by proving rectifiability of sets with locally finite horizontal perimeter in Carnot groups of step $2$. Following De Giorgi's strategy, they obtained this result, by showing  that almost every tangent is a set of constant normal and that constant-normal sets  are in fact half-spaces.
Alas, they noticed that this latter fact does not hold in higher-step Carnot groups.
In fact, in
\cite[Example 3.2]{fssc},
they gave the first explicit example of  a subset of the 
Engel group with constant normal that is not a half-space. More examples have then been given in  \cite{AKL} and we will be adding some more in the present work.

In  \cite{Barone-Adesi-Serra-Cassano-Vittone}, the three authors showed that sets with constant horizontal normal are \textit{calibrated sets}: the calibration that they used is the scalar product with the normal. This calibration method implies that, in any Carnot group, boundaries of sets with constant horizontal normal are minimal surfaces, just as it happens in the Euclidean framework. 
Guided by the Euclidean experience, one could expect fairly good results on the smoothness of calibrated sets: the classical regularity theory for minimal sets in $\R^n$ indeed argues, in its key steps, as follows. First write the set, locally around a point where we have a tangent plane, as a graph on the tangent plane; then prove that the normal is H\"older continuous. It is now crucial the fact that, in this Euclidean setting, one can further improve regularity to $C^1$ (for the original proofs see \cite{DeGiorgi61} and \cite{Giusti-book}).

Going back to the subRiemannian framework, one can write any  constant-normal set as an intrinsic upper-graph in the direction of the normal.
Such graphs have been considered in 
\cite{Kirchheim-SerraCassano, 
FSSC2006, ASCV, Barone-Adesi-Serra-Cassano-Vittone, 
MSCV,
Bigolin-Vittone,Bigolin-SerraCassano1,Bigolin-SerraCassano2,FSSC2011} and give  canonical  parametrizations.
By the work of \cite{Monti-Vittone}, 
it was expected that, if the uppergraphs of a function were giving sets with controlled normal (e.g., constant),
then the function had some regularity, as it happens in the classical Euclidean case that we mentioned before.

We shall give examples in the Engel group of sets with constant horizontal normal where the function is not even continuous. Here the choice of a specific normal direction (namely, of a scalar product on $V_1$) will be crucial. On the other hand we can prove that, by choosing other horizontal directions, we can express the set as upper-graph of a function taking values in the new direction, and this function is intrinsically Lipschitz continuous, in particular H\"older continuous.

\subsection{Overview of results}
We recall now the definition of the Carnot group of interest to us, the Engel group. The {\em Engel algebra} is the Lie algebra generated, as vector space,  by four vectors $X_1$, $X_2$, $X_3$, $X_4$, with relations
\begin{eqnarray}\label{engel5}
&[X_1 , X_2 ] = X_3 \qquad\text{and}\qquad [X_1 , X_3 ] = X_4,& \\
&[X_1 , X_4 ] =[X_2 , X_4 ] =[X_2 , X_3 ] =[X_3 , X_4 ] =0.& \nonumber 
\end{eqnarray}
Such an algebra is nilpotent of step $3$ and stratified by the strata
$$V_1:=\R X_1\oplus \R X_2, \; \; \; \; V_2:= \R X_3,\; \;  \; \; V_3:= \R X_4.$$
The Engel group is defined as the unique connected and simply connected   Lie group with the Engel algebra as Lie algebra.
Through the paper we denote by $\G$ such a group.

We endow the Engel group $\G$ with some Haar measure $\vol_\G$.
Given a measurable set $E\subset \G$ and a left-invariant vector field $X\in Lie(G)$, we write
$$\quad X \uno_E\geq 0 \qquad\qquad \text{ ( resp.  }X \uno_E= 0\,\text{)} $$
if, for all $\phi\in C_c^\infty(\G)$ with $\phi\geq 0$,
$$\quad -\int_E X\phi \;d\vol_\G\geq 0 \qquad\qquad \text{ ( resp.  }\int_E X\phi \;d\vol_\G= 0\,\text{)}. $$

Since the flow of a  left-invariant vector field   is a right translation, then   
 the flow of such a vector field preserve the Haar measure, which on a nilpotent group is always biinvariant.
In other words, any element of the Lie algebra is a divergence-free vector field on the manifold $\G$, endowed with a Haar measure   $\vol_\G$.

\begin{definition}[Constant horizontal normal]
Let $V_1$ be the first stratum of $Lie(\G)$. Fix a scalar product $\langle\cdot|\cdot\rangle$ on $V_1$.
A set $E\subseteq \G$ is said to have constant horizontal normal $X\in Lie(G)$ if
$X\in V_1$, $X \uno_E\geq 0$, and
$$Y\in V_1, \langle X|Y\rangle=0\implies Y \uno_E= 0.$$
\end{definition}

Regarding the next definition, recall that, being  connected, simply connected, and nilpotent, the Engel group $\G$ is diffeomorphic to $Lie(G)$, via the exponential map, and so is diffeomorphic to $\R^4$ .

\begin{definition}[Lipschitz domain]
A set $E\subseteq \G$ is called a Lipschitz Euclidean domain if, for one (and thus for all) diffeomorphisms $f:\G \to \R^4$, the set $f(E)$ is a  Lipschitz domain of $\R^4$. Namely,  $f(E)$ is an open set and any point on the boundary has a neighborhood in which the set can be described as the upper-graph of a Lipschitz map of three variables.
\end{definition}

Our first result is the following.
\begin{theorem}\label{teorema1}
If $E$ is a subset of the Engel group $\G$ that has constant horizontal normal, 
then there exists a Lipschitz domain $\tilde E\subset \G$ that is equivalent to $E$, i.e., it is such that $\vol(E\Delta\tilde E)=0$.
\end{theorem}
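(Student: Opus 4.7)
The argument I would carry out splits according to the position of the horizontal normal inside $V_1=\mathrm{span}(X_1,X_2)$. The relations $[X_1,X_2]=X_3$, $[X_1,X_3]=X_4$, $[X_2,X_3]=0$ single out $\R X_2$ as the unique horizontal line whose repeated brackets with $V_1$ fail to reach the top stratum $V_3$; every Lie algebra automorphism of $\mathfrak g$ preserving $V_1$ fixes $\R X_2$. I treat the two resulting cases separately.

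\emph{Generic case: the normal is not a multiple of $X_2$.} After a horizontal automorphism lifted to a group isomorphism I may assume $X_1\uno_E\geq 0$ and $X_2\uno_E=0$. Let $\mu := X_1\uno_E$. Since left-invariant vector fields are divergence-free for the bi-invariant Haar measure, pushforward along $R_g$ satisfies $(R_g)_*(YT)=\mathrm{Ad}_{g^{-1}}(Y)\cdot(R_g)_*T$. The $X_2$-invariance of $E$ gives $(R_{\exp(sX_2)})_*\uno_E=\uno_E$, and the exact expansion $\mathrm{Ad}_{\exp(-sX_2)}X_1=X_1+sX_3$ (terminating because $[X_2,X_3]=0$) yields
\[
(R_{\exp(sX_2)})_*\mu \;=\; \mu + s\,X_3\uno_E \qquad\text{for all } s\in\R.
\]
The left-hand side is a positive Radon measure for every $s$, which forces $X_3\uno_E=0$. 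Iterating with $R_{\exp(sX_3)}$ and $\mathrm{Ad}_{\exp(-sX_3)}X_1=X_1+sX_4$ then gives $X_4\uno_E=0$, so $E$ is invariant under the normal abelian subgroup $W=\exp(\mathrm{span}(X_2,X_3,X_4))$ and monotone along the $X_1$-flow. In coordinates $p=\exp(x_1X_1)\cdot w$, $w\in W$, this forces $E$ to coincide a.e.\ with a half-space $\{x_1\geq c\}$, which is trivially a Lipschitz Euclidean domain.

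\emph{Special case: the normal is (a multiple of) $X_2$.} Now $X_1\uno_E=0$ and $\nu:=X_2\uno_E\geq 0$. The same pushforward scheme applied to $\nu$ along $R_{\exp(sX_1)}$, using the quadratic expansion $\mathrm{Ad}_{\exp(-sX_1)}X_2=X_2-sX_3+\tfrac{s^2}{2}X_4$, produces
\[
(R_{\exp(sX_1)})_*\nu \;=\; \nu - s\,X_3\uno_E + \tfrac{s^2}{2}\,X_4\uno_E \qquad\text{for all } s\in\R,
\]
and positivity of this quadratic-in-$s$ expression yields both $X_4\uno_E\geq 0$ and the distributional Cauchy--Schwarz bound $(X_3\uno_E)^2\leq 2(X_2\uno_E)(X_4\uno_E)$. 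Using $X_1\uno_E=0$, in exponential coordinates $(x_1,x_2,x_3,x_4)$ with $X_1=\partial_{x_1}$ the set $E$ becomes a cylinder $\tilde E\times\R_{x_1}$ with $\tilde E\subset\R^3_{(x_2,x_3,x_4)}$ upward-closed in both $x_2$ and $x_4$; hence $\tilde E=\{x_4\geq h(x_2,x_3)\}$ modulo null sets for some $h$ which is non-increasing in $x_2$.

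The hardest step is then promoting this to a Lipschitz description in the Euclidean sense. Each fixed-$x_3$ slice of $\partial\tilde E$, being a non-increasing curve in the $(x_2,x_4)$-plane, is automatically a $1$-Lipschitz graph after the rotation $u=x_2+x_4$, $v=x_4-x_2$; the Cauchy--Schwarz bound reads pointwise on the boundary as $(\partial_{x_3}h)^2\leq -2\partial_{x_2}h$, and this is exactly the control on the $x_3$-dependence needed to pass from slicewise to joint Lipschitz regularity. After a final explicit Euclidean change of variables adapted to the $(u,v)$-frame I expect $\partial E$ to be the graph of a globally Lipschitz function of three variables, producing an equivalent Lipschitz Euclidean domain $\tilde E\subset\G$ and completing the proof.
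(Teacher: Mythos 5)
Your case division is the wrong way around, and this is a genuine gap. The dichotomy cannot be made on the position of the normal $X$; it must be made on the invariant horizontal direction, i.e.\ the line $V_1^\dagger=\R Y$ with $Y\uno_E=0$, which is the datum canonically attached to $E$. As you yourself observe, every automorphism of $\mathfrak g$ preserving $V_1$ fixes the line $\R X_2$; consequently one can normalize to ``$X_1\uno_E\geq 0$ and $X_2\uno_E=0$'' only when the invariant direction already equals $\R X_2$, and this is \emph{not} implied by the normal failing to be a multiple of $X_2$. Concretely, take a known non-half-space example $E_0$ with $X_1\uno_{E_0}=0$ and $X_2\uno_{E_0}\geq 0$ and push it forward by the automorphism $X_1\mapsto X_1-X_2$, $X_2\mapsto X_2$: the resulting set $E$ satisfies $(X_1-X_2)\uno_E=0$ and $(X_1+X_2)\uno_E\geq 0$, so its normal is proportional to $X_1+X_2$, not a multiple of $X_2$, yet $E$ is not a half-space. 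Your ``generic case'' would classify it as one. The correct split is: if $X_2\uno_E=0$, then your $\mathrm{Ad}$-computation gives $X_3\uno_E=X_4\uno_E=0$ and $E$ is a vertical half-space; otherwise the invariant direction is transverse to $\R X_2$, an automorphism sends it to $X_1$, and one lands in the hard case $X_1\uno_E=0$, $X_2\uno_E\geq 0$. The hard case is therefore the generic one, not the exceptional one.

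In the hard case your quadratic positivity computation (giving $X_4\uno_E\geq 0$ and the discriminant inequality) is correct and matches the paper, but the final passage to joint Lipschitz regularity is only asserted. The inequality $(\partial_{x_3}h)^2\leq -2\partial_{x_2}h$ is an inequality between measures; $\partial_{x_2}h$ can have a singular (jump) part, on which no pointwise gradient bound on $\partial_{x_3}h$ follows, so ``slicewise to joint Lipschitz'' does not come for free --- and you would first need to replace $E$ by its Lebesgue representative so that monotonicity holds along \emph{every} flow line before $\partial E$ can be discussed as a graph at all. The paper takes a different and complete route here: from the family $X_2+tX_3+\tfrac{t^2}{2}X_4$ of monotone directions it extracts, together with $X_1$ (invariant, hence monotone in both senses) and $X_2$, a basis of $\mathrm{Lie}(\G)$ of monotone directions (e.g.\ $X_1$, $X_2$, $X_4$, $X_2+2X_3+2X_4$); the Lebesgue representative then omits, at each boundary point, the translate of one fixed open cone, and the standard cone criterion yields the Lipschitz graph. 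If you repair the case division, grafting that cone argument onto your computation closes the proof; the differential inequality you invoke is really the content of the paper's separate PDE characterization, not of the rectifiability statement.
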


The strategy of the proof is as follows. Let $Y_1=X$ be the normal of $E$. Take $Y_2\in V_1$ with $\langle X|Y_2\rangle=0$.
Hence $Y_1 \uno_E\geq 0 $ and $Y_2 \uno_E= 0 $. 
Using a result of \cite[Proposition 4.7]{AKL}, we get $Y_3,Y_4\in Lie(\G)$ such that $Y_1,Y_2,Y_3,Y_4$ form a basis, $Y_3 \uno_E\geq 0 $, and  $Y_4 \uno_E\geq 0.$
Take the Lebesgue representative $\tilde E$ of $E$.
The set $\tilde E$ will have the property that, for all $p\in \partial \tilde E$ and for all $Z=\sum_{j=1}^4 \alpha_jY_j$ with $\alpha_j>0$, one has that 
$$\{t\in \R\;:\; p\exp(tZ)\in \tilde E\}=(0,+\infty).$$
In other words, the `cone'
$$C_p:=\left\{p\exp\left(\sum_{j=1}^4 \alpha_jY_j\right)\;:\;\alpha_j>0\right\}$$
does not intersect $\tilde E$.
Finally, a standard cone criterion gives the Lipschitz regularity of $\partial \tilde E$.

In the proof we get a cone $C_p$ that in exponential coordinates of second kind is constant, i.e., $C_p$ is a left translation of $C_q$, for $p,q\in \partial \tilde E$.
Thus we might conclude that $\partial \tilde E$ is an entire Lipschitz graph.

\begin{theorem}
\label{cor:37} Let $\G$ be the Engel group. Let $\Psi:\R^4 \to \G$ be the exponential coordinates of the second kind. There exists $L \geq  0$ such that, for all horizontal vector $X$, there exists a basis $w_1,\ldots,w_4\in \R^4$ with the following property. If $E\subset\G$ is any subset that has constant horizontal normal $X$, 
then there exists an $L$-Lipschitz map $h:\R^3\to \R$ such that $E$ is equivalent to 
$$\{ \Psi(a_1 w_1 +\ldots +  a_4 w_4)\;:\; a_1,a_2,a_3\in \R, a_4>h(a_1,a_2,a_3)\}.$$
\end{theorem}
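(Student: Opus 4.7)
The plan is to refine Theorem~\ref{teorema1} by working explicitly in the coordinates $\Psi$, exploiting the centrality of $X_4$ in the Engel algebra (so that the flow of $X_4$ acts linearly in the $a_4$-coordinate) to anchor the ``graph direction.'' First I would invoke the construction in the proof of Theorem~\ref{teorema1}: given $X \in V_1$, set $Y_1 = X$ and pick $Y_2 \in V_1$ with $\langle X | Y_2 \rangle = 0$; apply Proposition~4.7 of \cite{AKL} to obtain $Y_3, Y_4 \in Lie(\G)$ completing $\{Y_1, Y_2\}$ to a basis with $Y_j \uno_E \geq 0$ for each $j$. Since $V_2 = \R X_3$ and $V_3 = \R X_4$ are one-dimensional in the Engel case, after a linear manipulation of the basis (and possibly flipping the sign of $a_4$, which is harmless for Lipschitz estimates) one can take $Y_4 = X_4$. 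With $\tilde E$ the Lebesgue representative, for every $p \in \partial \tilde E$ the forward cone $C_p = p \exp(\{\sum_j \alpha_j Y_j : \alpha_j > 0\})$ lies in $\tilde E$ and the backward cone $-C_p$ lies in the complement.

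Next, define $w_j \in \R^4$ as the coordinates of $Y_j$ in the standard basis $X_1, \ldots, X_4$; then $w_4 = e_4$ and $\{w_j\}$ is a basis of $\R^4$. Centrality of $X_4$ makes the flow of $\exp(tX_4)$ act in $\Psi$-coordinates as $(a_1, a_2, a_3, a_4) \mapsto (a_1, a_2, a_3, a_4 + t)$; combined with $X_4 \uno_E \geq 0$, this shows that $\Psi^{-1}(\tilde E)$ is the upper-graph $\{x_4 > g(x_1, x_2, x_3)\}$ of some function $g$. The linear change of coordinates $x = \sum_j a_j w_j$ (invertible linear on the first three coordinates, identity on the fourth) converts this into $\{a_4 > h(a_1, a_2, a_3)\}$, with $h$ a simple modification of $g$ by a linear map. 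For the $L$-Lipschitz regularity of $h$, one expands $C_p$ in $\Psi$-coordinates using the Baker--Campbell--Hausdorff formula, which terminates at degree $3$ since $\G$ is step-$3$ nilpotent. The $+Y_4$-extremal ray of $C_p$ is rigid (a straight line in the $+a_4$-direction by centrality of $X_4$), and the other three extremal rays produce polynomial curves whose initial tangents at $p$, together with $e_4$, form a basis of $\R^4$. The cone criterion then gives that $\Psi^{-1}(C_p)$ contains a Euclidean cone around the $+a_4$ direction with aperture bounded below by a positive constant depending continuously on $(Y_1, Y_2, Y_3)$. Compactness of the horizontal unit sphere in $V_1$ yields a uniform lower bound on the aperture, hence a universal Lipschitz constant $L$.

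The main obstacle is establishing that the polynomial deformation of $C_p$ does not collapse the inscribed Euclidean cone: a priori, as $p$ moves far from the identity, the BCH corrections could bend the three curved extremal rays into the forbidden half-space $\{a_4 < p_4\}$. The centrality of $X_4$ is the structural feature that prevents this, anchoring the ray $\{p \exp(t X_4) : t \geq 0\}$ as a true straight line in the $+a_4$-direction inside every $C_p$. The BCH corrections in the other three directions involve only brackets into $X_3$ or $X_4$ (via $[X_1, X_2] = X_3$ and $[X_1, X_3] = X_4$), whose effect on $a_4$ is polynomial in $\alpha_1, \alpha_2, \alpha_3$; after a careful rescaling of the $\alpha_j$'s proportional to the stratum grading, these contributions can be dominated by the linear ones, ensuring a uniform aperture.
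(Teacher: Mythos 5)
There is a genuine gap, and it is located exactly where you anchor the graphing direction to $X_4$. In your construction $Y_4=X_4$ is one of the four \emph{generators} of the monotone cone $C_p=p\exp\{\sum_j\alpha_jY_j:\alpha_j>0\}$, hence $e_4$ is an extremal ray of the simplicial tangent cone spanned by $Y_1,\dots,Y_4$ and lies on its \emph{boundary}. No Euclidean cone of positive aperture about the $+a_4$ direction can be inscribed in that simplicial cone, so the cone criterion does not apply in the direction $e_4$; your claim that ``$\Psi^{-1}(C_p)$ contains a Euclidean cone around the $+a_4$ direction with aperture bounded below'' is false. What monotonicity along $X_4$ alone gives is an upper-graph $\{x_4>g(x_1,x_2,x_3)\}$ with $g$ taking values in $\overline{\R}=\{-\infty\}\cup\R\cup\{+\infty\}$, not an entire real-valued Lipschitz graph. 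The set $C=\{x_2>0,\ x_4>0,\ x_2>x_3^2/(2x_4)\}$ of Example \ref{example3} is a concrete counterexample: viewed over the $x_4$-direction its graphing function equals $+\infty$ on the whole half-space $\{x_2\le 0\}$ (a set of infinite measure, so $C$ is not even \emph{equivalent} to such a graph) and equals $x_3^2/(2x_2)$ for $x_2>0$, which blows up as $x_2\to 0^+$ and is not Lipschitz.

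The paper avoids this by choosing the graphing direction strictly \emph{inside} the cone of monotone directions rather than along an extremal ray. Concretely, it first writes $E=\{x_2>G(x_3,x_4)\}$ (Lemma \ref{lemma:esisteG}), proves that $G$ is partially $1$-Lipschitz along every vector $(a,a^2/2)$ (Lemma \ref{partiallylip}), and deduces that the translation-invariant open set $\{x_4\ge 0,\ x_2>\tfrac{x_3^2}{2x_4}\sqrt{x_3^2+x_4^2}\}$ sits above every boundary point; the graphing direction $w_4$ is then any vector in the \emph{interior} of this cone (a combination of the $x_2$- and $x_4$-directions, necessarily non-horizontal, cf.\ Theorem \ref{theorem:notlipshitz}). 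Your remaining ingredients (BCH terminating at step $3$, uniformity of the cone under translation, compactness over the horizontal unit sphere) are sound and could be salvaged, but only after replacing $e_4$ by an interior direction of the cone; as written, the choice $w_4=e_4$ makes the conclusion fail already for the set $C$.
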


Our rectifiability argument can be extended to all those filiform groups that are generated by two vectors $X$ and $Y$ and for which $Y$ and all the iterated commutators of $X$ and $Y$ are in the center. These are the so-called ``filiform groups of the first kind''. See Section \ref{filif-sec} for a discussion on filiform group and the proof of the result.

\medskip

Going back to the Engel group, we now turn our attention to the expression of $E$ as an \textit{``algebraically intrinsic horizontal graph''} (see Section \ref{intrinsic}).  This means that we express $E$ as 
union of half-flow-lines in a horizontal direction. 
 More precisely, let $X$ be a horizontal direction. 
Let $W$ be a  subgroup  that is complementary to  $\exp(\R X )$.
Let $T: W\to \R$ be any function.
We say that $E$ is an {\em intrinsic horizontal upper-graph} if
$$E=\{ w \exp(t X ) : w\in W, t > T(w)\}.$$
One should observe that the point $w \exp(t X ) $ is the  flow from $w$ for time $t$ in the direction $X$. Understanding the regularity of $T$ is then our next task.

The most natural horizontal direction to use is the direction of the normal. Nevertheless we find that

\begin{description}
 \item[(i)] (see \textbf{Theorem \ref{thm:noncont}}) There are examples of subsets $E\subset \G$ with constant horizontal normal $X_2$ and with the property that, when they are  written as upper-graphs in the  direction of $X_2$, the function 
for which they are upper graphs is  not continuous.  

 \item[(ii)] (see \textbf{Theorem \ref{theorem:intrinsiclipshitz}}) Let $E\subset \G$ be an arbitrary set having constant horizontal normal $X$. Whenever $E$ is written as intrinsic horizontal upper-graph using a horizontal direction $Y$ with $\langle X, Y \rangle \neq 0$ and $X$ not parallel to $Y$, then the function for which it is upper-graph is Lipschitz continuous with respect to the intrinsic Carnot-Caratheodory distance on $\G$. In particular the function is H\"older continuous for the Euclidean distance.

\end{description} 

We note that the `Euclidean Lipschitz continuity'
obtained in Theorem \ref{cor:37} requires the expression of $E$ as a graph with respect to a non-horizontal direction, see Theorem \ref{theorem:notlipshitz}. This confirms that the natural notion (intruduced by Franchi, Serapioni, and Serra Cassano in \cite{FSSC2006}, see also \cite{FSSC2011}) of
Lipschitz continuity to be used for intrinsic horizontal graphs in the 
subRiemannian context is the one with respect to the
intrinsic distance.


\medskip

To complete the description of sets with constant horizontal normal in the Engel group, in exponential coordinates of second kind we are able to give a characterization of any such set as upper-graph of a function that satisfies a partial differential inequality. Roughly speaking we can prove that a set has finite perimeter and constant horizontal normal $X_2$ if and only if it is of the form  $\{x_2 > G(x_3, x_4)\}$ for a BV function $G:\R^2 \to \R$ that satisfies the following partial differential inequality:
for all $h\in C^\infty_c(\G)$ such that $h\geq 0$,  it holds
\begin{equation}\label{PDI-intro}
(\langle \partial_3G, h\rangle)^2+2\langle\partial_4 G, h\rangle \langle {\Lc}^2, h \rangle \leq 0 .
\end{equation}
Here  ${\Lc}^2$ denotes the Lebesgue measure on $\R^2$ and  $\langle \,,\,\rangle$ denotes the pairing of distributions and smooth test functions.
The precise statement of the result (see Proposition \ref{prop:characterize}) requires to extend the target of $G$ to $\{-\infty\} \cup \R \cup \{+\infty\}$.
Minimal graphs of functions that also assume the values $+\infty$ and $-\infty$ have already appeared in the  Euclidean setting.  Namely, Mario Miranda considered them in the solution of the Dirichlet's problem for the minimal surfaces equation, see \cite{Miranda77} and \cite[Chapter 16]{Giusti-book}.

\subsection{Acknowledgements}
Both authors would like to thank ETH Z\"urich  for its supporting
research environment  
while part of this work was conducted. 
This paper has benefited  from numerous discussions with Luigi Ambrosio, Bruce Kleiner, Raul Serapioni, Francesco Serra Cassano, and Davide Vittone.
Special thanks go to them.


\section{\textbf{Getting more monotone directions}}
Let $E$ be a subset of the Engel group $\G$ that has constant horizontal normal $X\in Lie(\G)$.
 Let $Y$ be a vector in $V_1$ that is orthogonal to $X$.
Notice that the line $\R Y$ is independent from the scalar product chosen on $V_1$.
Now we face a dichotomy: either $Y$ is parallel to the vector $X_2$ of the definition of the Lie algebra representation \eqref{engel5} of $Lie(\G)$, or not. In the second case, we  show that we can change the basis of $Lie(\G)$ and assume that $Y=X_1$.

The case $Y=X_2$ is easy to handle and in fact we show that $E$ is (equivalent to) a half-space.
The case 
$Y=X_1$
is more complex and in this case it is not true that $E$ is a half-space, as it was previously shown in \cite{fssc}.

\subsection{Easy case: sets with normal   $X_1$}
Let $\G$ be the Engel group whose Lie algebra is generated by $X_1$, $X_2$, $X_3$, $X_4$ with relations
\eqref{engel5}.
\begin{lemma}\label{lem1}
Let $E\subset \G$. Assume that $X_1 \uno_E\geq 0$ and $X_2 \uno_E= 0$. Then $E$ is a vertical half-space.
\end{lemma}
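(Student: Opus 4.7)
The natural approach is to pass to exponential coordinates of the second kind,
\[
\Psi(x_1,x_2,x_3,x_4)\;=\;\exp(x_1 X_1)\exp(x_2 X_2)\exp(x_3 X_3)\exp(x_4 X_4),
\]
in which $X_2,X_3,X_4$ act as coordinate vector fields. The hypothesis $X_2\uno_E=0$ then says that $E$ is independent of $x_2$. The remaining work is to show that the distributional inequality $X_1\uno_E\geq 0$ forces $E$ also to be independent of $x_3$ and $x_4$, whence $\uno_E=\uno_G(x_1)$ for a non-decreasing $\uno_G$, and therefore $E=\{x_1\geq c\}$ (up to null sets) for some $c\in[-\infty,+\infty]$. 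This is exactly a vertical half-space in the $X_1$-direction.

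A short BCH/adjoint calculation is the first step. Using the relations \eqref{engel5}, and noting that $X_2,X_3,X_4$ commute and that $\mathrm{Ad}_{\exp(-tX_1)}$ truncates by nilpotency, one moves $\exp(tX_1)$ successively past $\exp(x_4 X_4)$, $\exp(x_3 X_3)$, $\exp(x_2 X_2)$, obtaining
\[
p\exp(tX_1)\;=\;\Psi\!\left(x_1+t,\,x_2,\,x_3-x_2 t,\,x_4-x_3 t+\tfrac{x_2 t^2}{2}\right).
\]
Differentiating at $t=0$ yields $X_1=\partial_{x_1}-x_2\partial_{x_3}-x_3\partial_{x_4}$, while the analogous (easier) computations give $X_j=\partial_{x_j}$ for $j=2,3,4$.

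Writing $\uno_E=\uno_F(x_1,x_3,x_4)$ (which is $X_2\uno_E=0$), the hypothesis $X_1\uno_E\geq 0$ becomes the distributional inequality
\[
\partial_{x_1}\uno_F \;-\; x_2\,\partial_{x_3}\uno_F \;-\; x_3\,\partial_{x_4}\uno_F \;\geq\; 0 \quad\text{on } \R^4.
\]
Test against a product $\phi_0(x_2)\psi(x_1,x_3,x_4)$ with $\phi_0,\psi\geq 0$. Since the three distributions $\partial_{x_j}\uno_F$ are independent of $x_2$, the pairing collapses to
\[
\alpha\,\langle\partial_{x_1}\uno_F,\psi\rangle \;-\; \beta\,\langle\partial_{x_3}\uno_F,\psi\rangle \;-\; \alpha\,\langle\partial_{x_4}\uno_F,\,x_3\psi\rangle \;\geq\; 0,
\]
where $\alpha=\int\phi_0$ and $\beta=\int x_2\,\phi_0$. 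By translating a fixed nonnegative bump one realises any $\alpha>0$ and any $\beta\in\R$ independently; letting $|\beta|\to\infty$ with the sign chosen adversarially forces $\langle\partial_{x_3}\uno_F,\psi\rangle=0$ for every $\psi\geq 0$. Hence $\partial_{x_3}\uno_F=0$ and $\uno_F=\uno_{F'}(x_1,x_4)$.

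The residual inequality reads $\partial_{x_1}\uno_{F'}-x_3\,\partial_{x_4}\uno_{F'}\geq 0$. Applying the same scheme to $\phi_0(x_2)\eta(x_3)u(x_1,x_4)$, and observing that by translation of $\eta\geq 0$ the first moment $\int x_3\eta$ ranges over all of $\R$ while $\int\eta$ stays positive, kills $\partial_{x_4}\uno_{F'}$. So $\uno_E$ depends only on $x_1$, with distributionally non-decreasing $x_1$-derivative, which forces $E$ to coincide almost everywhere with a set $\{x_1\geq c\}$. The main obstacle is precisely the presence of the variable coefficients $-x_2,-x_3$ in $X_1$: rather than being a nuisance they drive the whole proof, and the only real subtlety is the choice of a test-function family with independently prescribable zeroth and first moments in the transverse variable.
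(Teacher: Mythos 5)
Your proof is correct and, despite the coordinate dressing, is essentially the paper's argument: translating the bump $\phi_0$ in the $x_2$-variable to make the first moment $\beta=\int x_2\phi_0$ arbitrary and then sending $\beta\to\pm\infty$ is precisely the paper's use of Proposition \ref{AKL prop} to get that $\mathrm{Ad}_{\exp(tX_2)}X_1=X_1-tX_3$ is monotone for all $t$ followed by $t\to\pm\infty$ to conclude $X_3\uno_E=0$, and your second moment argument in $x_3$ likewise reproduces the step $\mathrm{Ad}_{\exp(tX_3)}X_1=X_1-tX_4$. The only real difference is that you carry everything out self-containedly in exponential coordinates of the second kind (including the final reduction to $\{x_1>c\}$), where the paper instead invokes the adjoint-stability and BV-characterization propositions quoted from [AKL].
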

For the definition and other characterization of  half-spaces see \cite{AKL}.
We make use of the following property of stability of monotone directions.
\begin{proposition}[{\cite[Proposition 4.7]{AKL}}]\label{AKL prop}
Let $X, Y\in Lie(\G)$ and $E\subset \G$. Assume that $X \uno_E= 0$ and $Y \uno_E\geq 0$. Then 
$({\rm Ad}_{\exp(X)}Y )\uno_E\geq 0$.
\end{proposition}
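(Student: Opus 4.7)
The plan is to prove the statement by a direct integration-by-parts argument that rests on three ingredients: (a) the measurable invariance of $E$ under right-translation by $\exp(tX)$ implied by $X\uno_E=0$, (b) the Lie-theoretic identity $(R_{\exp(-X)})_\ast Y = {\rm Ad}_{\exp X}Y$ relating the adjoint action to pushforward by the flow of $X$, and (c) the bi-invariance of the Haar measure $\vol_\G$ on the nilpotent group $\G$.

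First I would check that $X\uno_E=0$ forces $E\cdot\exp(tX)=E$ mod $\vol_\G$-null for every $t\in\R$: this is the standard fact that distributional solutions of $Xu=0$ are constant along the flow orbits, obtained by mollifying $\uno_E$ by right-convolution against a compactly supported smooth kernel (which commutes with the left-invariant field $X$, so the mollification stays annihilated by $X$) and reducing to the smooth case, which is an elementary ODE argument. Next, because left and right translations commute, for any left-invariant $Y$ the pushforward $(R_g)_\ast Y$ is left-invariant, and a direct computation of its value at the identity yields $(R_g)_\ast Y = {\rm Ad}_{g^{-1}}(Y)$; with $g=\exp(-X)$ this becomes $(R_{\exp(-X)})_\ast Y = {\rm Ad}_{\exp X}Y$, which on a test function $\phi\in C_c^\infty(\G)$ reads
$$({\rm Ad}_{\exp X}Y)\phi \;=\; \bigl[Y(\phi\circ R_{\exp(-X)})\bigr]\circ R_{\exp X}.$$

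With these ingredients in place, for $\phi\in C_c^\infty(\G)$ with $\phi\geq 0$ I would compute
$$\langle ({\rm Ad}_{\exp X}Y)\uno_E,\phi\rangle = -\int_E \bigl[Y(\phi\circ R_{\exp(-X)})\bigr]\circ R_{\exp X}\, d\vol_\G,$$
change variables via $p = q\exp X$ (Jacobian $1$ by right-invariance) to rewrite the right-hand side as $-\int_{E\exp X} Y(\phi\circ R_{\exp(-X)})\, d\vol_\G$, and then use Step 1 to replace $E\exp X$ by $E$; this yields $\langle ({\rm Ad}_{\exp X}Y)\uno_E,\phi\rangle = \langle Y\uno_E,\phi\circ R_{\exp(-X)}\rangle$, which is $\geq 0$ since $\phi\circ R_{\exp(-X)}\geq 0$ and $Y\uno_E$ is, by assumption, a nonnegative Radon measure.

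The main obstacle is the first step: promoting the distributional identity $X\uno_E=0$ to a genuine measurable invariance of $E$ under $R_{\exp X}$; once this is secured, the rest is bookkeeping. An alternative, purely algebraic route would be to expand ${\rm Ad}_{\exp X}Y = \sum_{k\ge 0}\frac{1}{k!}({\rm ad}_X)^kY$ (a finite sum by nilpotency), verify inductively using $X\uno_E=0$ that $({\rm ad}_X)^kY\,\uno_E = X^k(Y\uno_E)$, and interpret $\sum_k\frac{1}{k!}X^k(Y\uno_E)$ as the pushforward of the positive measure $Y\uno_E$ under the diffeomorphism $R_{\exp X}$, but this detour ultimately hinges on the same flow-invariance fact.
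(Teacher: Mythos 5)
The paper does not actually prove this proposition: it is imported verbatim from \cite[Proposition 4.7]{AKL}, so there is no internal proof to compare against. Your argument is correct and self-contained, and it is worth recording how it meshes with what the paper does contain. The Lie-theoretic identity $(R_{\exp(-X)})_\ast Y = \mathrm{Ad}_{\exp X}Y$ is right (both sides are left-invariant and agree at the identity, since $R_g\circ L_{g^{-1}}$ is conjugation by $g^{-1}$), the change of variables is legitimate by right-invariance of $\vol_\G$ (a fact the paper itself invokes in Lemma \ref{lem5}), and the resulting chain $\langle (\mathrm{Ad}_{\exp X}Y)\uno_E,\phi\rangle = \langle Y\uno_E,\phi\circ R_{\exp(-X)}\rangle \geq 0$ closes the argument. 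The one step you rightly single out as the crux --- upgrading the distributional identity $X\uno_E=0$ to $E\exp(tX)=E$ modulo $\vol_\G$-null sets --- is exactly what the paper's Lemma \ref{cortdp'} supplies once applied to both $X$ and $-X$, so you could cite that lemma in place of your mollification argument; if you do mollify, note that the smoothing must be performed by averaging over left translates (the convolution on that side is the one commuting with left-invariant fields), which your parenthetical gets right in substance even though the left/right naming is convention-dependent. Your sketched ``purely algebraic'' alternative is the only shaky point: the inductive identity $(\mathrm{ad}_X)^kY\,\uno_E = X^k(Y\uno_E)$ is fine, but re-summing a finite list of distributional derivatives into the pushforward of the measure $Y\uno_E$ under $R_{\exp X}$ is not automatic and would need the same flow-invariance input made rigorous; since you explicitly flag this as a detour and do not rely on it, the proof stands on the first route.
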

Recall that 
$${\rm Ad}_{\exp(X)}Y = e^{{\rm ad}_X}Y=Y+[X,Y]+\dfrac{1}{2}[X,[X,Y]],$$
in a $3$-step group.

\proof[Proof of Lemma \ref{lem1}] 
Applying Proposition \ref{AKL prop} with $X=t X_2$, $Y=X_1$, and $t\in \R$, we get that the vector field 
$$Z:={\rm Ad}_{\exp(tX_2)}X_1 =X_1+[tX_2,X_1]+\dfrac{1}{2}[tX_2,[tX_2,X_1]]=X_1-t X_3 $$
is such that $Z\uno_E\geq 0$, for all $t\in \R$. Letting $t\to +\infty$ and $t\to -\infty$, respectively, we get that both
$-X_3 \uno_E\geq 0$ and $X_3 \uno_E\geq 0$. Hence 
$X_3 \uno_E= 0$.
Apply again the proposition with $X=t X_3$, $Y=X_1$, and $t\in \R$.
Thus the vector
$$Z':={\rm Ad}_{\exp(tX_3)}X_1 =X_1+[tX_3,X_1] =X_1-t X_4 $$
is such that $Z'\uno_E\geq 0$, for all $t\in \R$.
Arguing as before, we conclude that $X_4 \uno_E= 0$.
By the BV characterization of vertical half- spaces, see {\cite[Proposition 4.4]{AKL}}, we are done.
\qed

\subsection{Hard case: sets with normal  $X_2$}

We first argue that if the normal is not $X_1$ then we can assume that the normal is $X_2$.
\begin{lemma}
Let $\mathfrak g$ be the Engel algebra with basis $X_1$, $X_2$, $X_3$, $X_4$ and relations
\eqref{engel5}. Let $X=\alpha X_1 +\beta X_2$ with $\alpha, \beta\in \R$ and $\alpha\neq 0$.
Then there exists a Lie algebra endomorphism $\psi$ of $\mathfrak g$ such that 
$\psi X_1=X.$
\end{lemma}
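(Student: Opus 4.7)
The plan is to define $\psi$ explicitly on the two generators $X_1,X_2$ of the Engel algebra and let the bracket relations force the rest. Concretely, I would set
\[
\psi X_1 := \alpha X_1+\beta X_2,\qquad \psi X_2 := X_2,
\]
and then \emph{define} $\psi X_3$ and $\psi X_4$ by the bracket relations in \eqref{engel5}, namely $\psi X_3:=[\psi X_1,\psi X_2]$ and $\psi X_4:=[\psi X_1,\psi X_3]$. A one-line computation using $[X_1,X_2]=X_3$, $[X_1,X_3]=X_4$ and $[X_2,X_3]=0$ gives
\[
\psi X_3 = \alpha X_3,\qquad \psi X_4 = \alpha^2 X_4.
\]
By linearity this extends uniquely to a linear endomorphism $\psi:\mathfrak g\to\mathfrak g$.

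What remains is to check that $\psi$ is compatible with \emph{all} the Engel relations, not just those used in its definition. The nontrivial relations $[X_1,X_2]=X_3$ and $[X_1,X_3]=X_4$ hold by construction, so I only have to verify the vanishing of the remaining brackets $[X_2,X_3]$, $[X_1,X_4]$, $[X_2,X_4]$, $[X_3,X_4]$ on the images. Since $\psi X_4$ is a scalar multiple of $X_4$ and $X_4$ is central in $\mathfrak g$, all brackets involving $\psi X_4$ vanish automatically; the only point that is not completely formal is
\[
[\psi X_2,\psi X_3]=[X_2,\alpha X_3]=\alpha[X_2,X_3]=0,
\]
and this is where the specific structure of the Engel relations (the relation $[X_2,X_3]=0$) is used.

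There is really no obstacle in this argument; the only thing to be careful about is not defining $\psi X_2$ as a generic combination $\gamma X_1+\delta X_2$, because then preserving $[X_2,X_3]=0$ would force $\gamma(\alpha\delta-\beta\gamma)=0$ and complicate the construction. The choice $\psi X_2=X_2$ sidesteps this, and the condition $\alpha\neq 0$ — although not strictly needed for $\psi$ to be an endomorphism — is precisely what ensures that $\psi$ is in fact an automorphism (its determinant in the basis $X_1,X_2,X_3,X_4$ equals $\alpha^4$), which is what is needed for the change-of-basis application in the following subsection.
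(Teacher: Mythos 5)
Your proof is correct and is essentially the same as the paper's: both define $\psi$ by $\psi X_1=\alpha X_1+\beta X_2$, $\psi X_2=X_2$, $\psi X_3=\alpha X_3$, $\psi X_4=\alpha^2 X_4$ and verify compatibility with the relations \eqref{engel5}. Your additional explicit check of the vanishing brackets and the determinant remark $\alpha^4\neq 0$ only make explicit what the paper leaves as ``straightforward.''
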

\proof
Define $\psi:\mathfrak g\to \mathfrak g$ by the property
$$\psi X_1=\alpha X_1 +\beta X_2
\qquad \text{ and }\qquad \psi X_2= X_2,$$
and
$$\psi X_3=\alpha X_3\qquad \text{ and }\qquad \psi X_4= \alpha^2 X_4.$$
It is straightforward
\footnote{Here is the calculation: 
$\psi[X_1,X_2]=\psi X_3=\alpha X_3= \alpha [X_1,X_2]=[\psi X_1,\psi X_2]$ and
$\psi[X_1,X_3]=\psi X_4= \alpha^2 X_4= \alpha^2 [X_1,X_3]=[\psi X_1,\psi X_3]$.
}
 to check that such a $\psi$ is an isomorphism. \qed
 
By the above lemma, the following fact is immediate.
\begin{corollary}\label{dicotomia}
Let $E$ be a subset of the Engel group $\G$. Let  $X, Y\in V_1\subset Lie(\G)$ linearly independent.
Assume $X \uno_E\geq 0$ and $  Y \uno_E= 0.$
Then there exists a basis $X_1$, $X_2$, $X_3$, $X_4$ of $Lie(\G)$ with relations
\eqref{engel5} such that 
\begin{itemize}
\item either $X_1 \uno_E\geq 0$ and $  X_2 \uno_E= 0,$
\item or $X_1 \uno_E= 0$ and $  X_2 \uno_E\geq 0.$
\end{itemize}
\end{corollary}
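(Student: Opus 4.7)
My plan is to work in a fixed Engel basis $e_1,e_2,e_3,e_4$ and write
\[
X=\alpha e_1+\beta e_2,\qquad Y=\gamma e_1+\delta e_2,
\]
with $\alpha\delta-\beta\gamma\neq 0$ by linear independence. Depending on whether $Y$ has a nonzero $e_1$-component I will apply the preceding Lemma to either $X$ or $Y$, producing a new Engel basis in which one of the two cases in the statement holds.

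\textbf{Case 1: $\gamma=0$.} Then $Y=\delta e_2$ with $\delta\neq 0$, and the linear independence of $X,Y$ forces $\alpha\neq 0$. The Lemma applied to $X$ provides an automorphism $\psi$ with $\psi e_1=X$, and then $(X,e_2,\alpha e_3,\alpha^2 e_4)$ is a new Engel basis. Calling it $X_1,X_2,X_3,X_4$, we have $X_1\uno_E=X\uno_E\geq 0$ and $X_2\uno_E=\tfrac{1}{\delta}Y\uno_E=0$, giving the first bullet.

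\textbf{Case 2: $\gamma\neq 0$.} The Lemma applied to $Y$ gives a new Engel basis $(Y,e_2,\gamma e_3,\gamma^2 e_4)$; call it $X_1',X_2',X_3',X_4'$. Then $X_1'\uno_E=Y\uno_E=0$. Expanding $X$ in this basis I get
\[
X=\tfrac{\alpha}{\gamma}X_1'+c\, X_2',\qquad c=\tfrac{\beta\gamma-\alpha\delta}{\gamma}\neq 0,
\]
where $c\neq 0$ again by linear independence of $X,Y$. Since $X_1'\uno_E=0$, we obtain $X\uno_E=c\,X_2'\uno_E$, and from $X\uno_E\geq 0$ it follows that $X_2'\uno_E$ has a definite sign. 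If $c>0$ we are in the second bullet directly. If $c<0$, I replace $(X_1',X_2',X_3',X_4')$ by $(X_1',-X_2',-X_3',-X_4')$, which is readily checked to still satisfy the Engel relations \eqref{engel5}, and the second bullet holds.

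This is really a straightforward application of the previous Lemma together with elementary bookkeeping, so I do not foresee any genuine obstacle. The only mild subtlety is making sure that the sign flip in Case 2 is performed in a way that preserves the Engel relations, which amounts to propagating the sign change through the brackets $[X_1',X_2']=X_3'$ and $[X_1',X_3']=X_4'$; this is the reason one must negate $X_3'$ and $X_4'$ simultaneously with $X_2'$.
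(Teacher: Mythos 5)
Your proof is correct and follows exactly the route the paper has in mind: the paper declares the corollary ``immediate'' from the preceding lemma, and your two cases (according to whether $Y$ has a nonzero $e_1$-component) are precisely the dichotomy described in the surrounding text. The only detail you add beyond what the paper elides is the sign flip $(X_1',-X_2',-X_3',-X_4')$ when $c<0$, which you handle correctly.
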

In other words, we only need to study the cases where either $E$ has normal $X_1$ or it has normal $X_2$.
Since we already solved the first case, let us focus now on the second.

If one applies  Proposition \ref{AKL prop} to the case of constant equal to $X_2$, one obtains the following.
\begin{lemma}\label{lem2}
Let $E\subset \G$. Assume that $X_2 \uno_E\geq 0$ and $X_1 \uno_E= 0$. Then, for all $t\in \R$, the vector
$$Z_t:= X_2+t X_3 +\dfrac{t^2}{2} X_4$$
is such that
$Z_t\uno_E\geq 0$.
In particular,
$$X_4\uno_E\geq 0\qquad\text{ and }\qquad (X_2+2X_3 +2 X_4)\uno_E\geq 0.$$
\end{lemma}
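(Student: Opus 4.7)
The plan is to apply Proposition \ref{AKL prop} directly with the roles $X = tX_1$ and $Y = X_2$, exploiting the hypotheses $X_1 \uno_E = 0$ and $X_2 \uno_E \geq 0$. The proposition, combined with the displayed formula for $\mathrm{Ad}_{\exp(X)}Y$ in a $3$-step nilpotent group, reduces the lemma to a one-line Lie algebra computation in the Engel algebra using the relations \eqref{engel5}.

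Concretely, first I would compute
\[
\mathrm{Ad}_{\exp(tX_1)}X_2 = X_2 + [tX_1, X_2] + \tfrac{1}{2}\bigl[tX_1,[tX_1,X_2]\bigr],
\]
and note that $[X_1,X_2]=X_3$ and $[X_1,X_3]=X_4$ imply $[tX_1,X_2] = tX_3$ and $[tX_1,[tX_1,X_2]] = t^2 X_4$, so the right-hand side is exactly $Z_t = X_2 + tX_3 + \tfrac{t^2}{2}X_4$. Proposition \ref{AKL prop} then yields $Z_t \uno_E \geq 0$ for every $t\in\R$, which is the main assertion.

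For the two specific consequences, I would observe that dividing by $t^2/2$ preserves the sign of the distribution, giving
\[
\Bigl(\tfrac{2}{t^2}X_2 + \tfrac{2}{t}X_3 + X_4\Bigr)\uno_E \geq 0,
\]
and then letting $t\to +\infty$ produces $X_4\uno_E \geq 0$ (the first two summands go to zero as distributions). The second claim is just the case $t=2$ of the main assertion. There is no genuine obstacle here: the content of the lemma is entirely concentrated in Proposition \ref{AKL prop}, and the only thing to check is the bracket computation above.
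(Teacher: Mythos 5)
Your proposal is correct and is exactly the paper's intended argument: the lemma is stated as an immediate consequence of Proposition \ref{AKL prop} applied with $X=tX_1$ and $Y=X_2$, and your bracket computation $\mathrm{Ad}_{\exp(tX_1)}X_2 = X_2+tX_3+\tfrac{t^2}{2}X_4$ together with the observation that $t=2$ gives the second special case is precisely what is needed. The rescaling-and-limit argument for $X_4\uno_E\geq 0$ also matches the technique the paper uses in the analogous Lemma \ref{lem1}.
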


One should notice at this point that the four vectors
$$Y_1:=X_1, \quad Y_2:=X_2, \quad Y_3:=X_4, \quad Y_4:=X_2+2X_3 +2 X_4$$
forms a  basis of $Lie(\G)$. Moreover,  for all $j=1,\ldots, 4$,
we just proved that $Y_j \uno_E\geq 0$.
The next proposition will then conclude the proof of Theorem \ref{teorema1}.
\begin{proposition}\label{prop1}
Let $\G$ be any Carnot group. Let $E\subset \G$. Let $Y_1,\ldots, Y_n$ be a basis of $Lie(\G)$.
Assume that $Y_j \uno_E\geq 0$, for all $j=1,\ldots, n$.
 Then $E$ is equivalent to an (Euclidean) Lipschitz domain.
\end{proposition}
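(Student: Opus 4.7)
My plan is to prove the proposition in three stages: upgrade the distributional monotonicity to a pointwise cone condition on a good representative, translate this cone condition into Euclidean coordinates, and then quote the classical fact that a two-sided cone condition forces the boundary to be locally a Lipschitz graph.

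First, I would pass to the Lebesgue representative $\tilde E := E^{(1)}$ (the density-one points of $E$), which satisfies $\vol(E\,\Delta\,\tilde E)=0$. Using Fubini along the foliation of $\G$ by orbits of the flow of each $Y_j$, the hypothesis $Y_j\uno_E\ge 0$ says exactly that $t\mapsto \uno_E(p\exp(tY_j))$ is (essentially) non-decreasing along each orbit. Combined with the density-one choice of representative, this can be promoted to a genuinely pointwise statement (this is the standard passage used for instance in the proof of \cite[Proposition~4.7]{AKL}): for every $p\in\tilde E$ and every $t>0$ one has $p\exp(tY_j)\in\tilde E$, and dually, for every $p\notin\tilde E$ and every $t>0$ one has $p\exp(-tY_j)\notin\tilde E$. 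By linearity of the distributional derivative, the same monotonicity persists for every $Z=\sum_j \alpha_j Y_j$ with $\alpha_j\ge 0$, since such $Z$ also satisfies $Z\uno_{\tilde E}\ge 0$.

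Second, I iterate this pointwise monotonicity on the successive factors $Y_1,\ldots,Y_n$ to obtain a full cone condition. Define
$$\Gamma^+_p := \bigl\{\, p\exp(\alpha_1 Y_1)\exp(\alpha_2 Y_2)\cdots\exp(\alpha_n Y_n) \,:\, \alpha_j>0\,\bigr\},$$
and symmetrically $\Gamma^-_p := \{p\exp(-\alpha_n Y_n)\cdots\exp(-\alpha_1 Y_1):\alpha_j>0\}$. The iteration (apply the $Y_1$-monotonicity, then the $Y_2$-monotonicity at the new basepoint, and so on) gives $\Gamma^+_p\subset\tilde E$ whenever $p\in\tilde E$, and $\Gamma^-_p\subset\tilde E^c$ whenever $p\notin\tilde E$. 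Since these are open sets, taking closures yields, for every topological boundary point $p\in\partial\tilde E$, the two-sided inclusion $\Gamma^+_p\subset\tilde E$ and $\Gamma^-_p\subset\tilde E^c$.

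Third, I transfer this to Euclidean coordinates and invoke the classical cone criterion. Fix any diffeomorphism $f\colon\G\to\R^n$ and consider, for each $p\in\G$, the smooth map $\Phi_p\colon\R^n\to\G$, $\Phi_p(\alpha)=p\exp(\alpha_1 Y_1)\cdots\exp(\alpha_n Y_n)$. Its differential at $\alpha=0$ equals the linear map sending the standard basis to the basis $Y_1|_p,\ldots,Y_n|_p$ of $T_p\G$, hence is a linear isomorphism; therefore $f\circ\Phi_p$ has a tangent cone at $f(p)$ of the form $df_p(\mathrm{span}_{>0}\{Y_j|_p\})$, an open convex cone of positive solid angle. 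This tangent cone depends smoothly on $p$, so given any $p_0\in\partial\tilde E$ I can find a neighborhood $U$ of $p_0$, a radius $r>0$, and a single open Euclidean cone $K\subset\R^n$ (with some axis $e$ and positive aperture) such that for every $q\in U$:
$$ f(q)+(K\cap B_r)\subset f(\Gamma^+_q)\subset f(\tilde E),\qquad f(q)-(K\cap B_r)\subset f(\Gamma^-_q)\subset f(\tilde E)^c. $$
For boundary points $q\in\partial\tilde E\cap U$ this is the standard two-sided uniform cone condition, which implies that $f(\tilde E)$ is, in a neighborhood of $f(q)$, an upper-graph in the direction $e$ of a Lipschitz function of the remaining $n-1$ coordinates. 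Since $p_0$ was arbitrary, $\tilde E$ is a Lipschitz Euclidean domain.

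The main obstacle is the first step: carefully promoting the distributional monotonicity along each $Y_j$ to a pointwise statement that simultaneously holds at every density-one point of $\tilde E$ and for every positive time, so that one can legitimately iterate in step two. Once this is in place, the rest is a smooth-map computation plus the textbook cone characterization of Lipschitz domains.
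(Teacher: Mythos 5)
Your proposal follows essentially the same route as the paper's own proof: pass to the Lebesgue (density-one) representative, upgrade the distributional inequalities $Y_j\uno_E\geq 0$ to pointwise monotonicity along flow lines (this is Lemma \ref{cortdp'} together with Lemma \ref{lem5}, where the key technical point you leave implicit is that the flow of a left-invariant field is a right translation, hence preserves the Haar measure and the balls of a right-invariant metric), deduce a two-sided cone condition at boundary points, and conclude with the classical cone criterion in a smooth chart. The only cosmetic difference is that you use the product cone $p\exp(\alpha_1Y_1)\cdots\exp(\alpha_nY_n)$ obtained by iterating the one-directional monotonicity, whereas the paper uses the single-exponential cone $p\exp\bigl(\sum_j\alpha_jY_j\bigr)$, justified by the remark (which you also make) that every nonnegative combination of the $Y_j$ is again a monotone direction; the two cones have the same tangent cone at $p$ and either one works. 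One step is phrased incorrectly, although the conclusion is true and easily repaired: at a boundary point $p$, ``taking closures'' only yields $\Gamma^+_p\subset\overline{\tilde E}$ and $\Gamma^-_p\subset\overline{\tilde E^c}$, which does not exclude that $\partial\tilde E$ meets the open cones. The correct argument is that the set $\{(q,y):y\in\Gamma^+_q\}$ is open, so if $x\in\Gamma^+_p$ and $q_n\in\tilde E$ with $q_n\to p$, then a fixed neighborhood of $x$ is contained in $\Gamma^+_{q_n}\subset\tilde E$ for $n$ large, whence $\Gamma^+_p$ lies in the interior of $\tilde E$; the symmetric argument with approximating points of the complement gives $\Gamma^-_p$ inside the interior of $\tilde E^c$. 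This is the content of Observation \ref{osservazione} and of the normalization $T=0$ at boundary points in the paper's proof, and once it is in place the rest of your argument is sound.
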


We need to postpone the proof of Proposition \ref{prop1}, since we need some preliminaries for it. Namely, we need to choose a good representative for the set $E$.

\proof[Proof of Theorem \ref{teorema1}]
According to Corollary \ref{dicotomia}, we might consider two cases: either the set $E$ has normal $X_1$ or it has normal $X_2$.
In the first case, Lemma \ref{lem1} implies that   $E$ is a vertical half-space, and so a Lipschitz domain.

If the normal is $X_2$, 
Lemma \ref{lem2} give us four linearly independent monotone directions, i.e., $Y_j \uno_E\geq 0$.
Hence, Proposition \ref{prop1} concludes.
\qed

\section{\textbf{Regularity of sets with normal $X_2$}}

In Section \ref{lipdom} we will show Theorem \ref{teorema1} by proving that the Lebesgue representative $\tilde{E}$ of a set $E$ having constant horizontal normal $X_2$ satisfies a ``cone criterion'' and is therefore an Euclidean Lipschitz domain.

We will then, in subsection \ref{inamodel}, fix a model of the Engel group and show that in this model, by suitably rotating coordinates, $\tilde{E}$ can actually be expressed as the subgraph of an entire Lipshitz function. In the last subsection \ref{classofex} we characterize all sets having constant horizontal normal $X_2$.

\subsection{Sets with normal $X_2$ are Lipschitz domains}
\label{lipdom}
We need to choose a good representative for the set $E$. In fact we want to have an equivalent set $\tilde E$ for which {\em all} line flows of $Y_j$, $j=1,\ldots, n$, meet $\tilde E$ in a half-line. Such a fact will also be useful for writing $\partial\tilde E$ as a  graph. The fact that such graphs will be uniformly Lipschitz is because, by assumption, they avoid  cones that are left translations of the same fixed cone.

The strategy requires the study of the Lebesgue representative of our original set, which allows us to obtain monotonicity along every flow line on every direction $Y_j$. Subsequently, we show that the topological boundary of this new set is  locally a Lipschitz graph. 

\medskip

Recall that if $X$ is a left-invariant vector field in a Lie group $\G$, i.e., $X\in Lie(\G)$, then 
its flow is a right translation. Namely, 
 $$\Phi_X(p,t)=p\exp(tX), \qquad \forall p\in \G.$$

\begin{lemma}\label{cortdp'}
Let $\G$ be any Carnot group. Let $E\subset \G$. Let $X\in Lie(\G)$.
Assume that $X \uno_E\geq 0$.
Then, for any $t>0$, we have that (almost everywhere) it holds
$  \uno_E\leq \uno_E \circ\Phi_X(\cdot,t)$. In particular, it is true that a.e.
$$  \uno_{E\exp(X)}\leq  \uno_E.$$
\end{lemma}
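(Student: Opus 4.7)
The plan is to mollify $\uno_E$ into a smooth function $f_\epsilon$ on $\G$ so that the distributional inequality $X\uno_E\geq 0$ transfers to the pointwise inequality $Xf_\epsilon\geq 0$, and then exploit the fact that the flow of the left-invariant field $X$ is right translation. Since $X$ is left-invariant,
$$\tfrac{d}{ds}\,f(p\exp(sX))\;=\;(Xf)(p\exp(sX))$$
for any smooth $f$; thus $Xf_\epsilon\geq 0$ forces $s\mapsto f_\epsilon(p\exp(sX))$ to be non-decreasing, and in particular $f_\epsilon(p)\leq f_\epsilon(p\exp(tX))$ for every $t>0$ and every $p\in\G$.

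Concretely, I would pick a non-negative $\rho_\epsilon\in C_c^\infty(\G)$ supported near the identity with $\int \rho_\epsilon\,d\vol_\G=1$, and set
$$f_\epsilon(p)\;:=\;\int_\G \uno_E(qp)\,\rho_\epsilon(q)\,d\vol_\G(q).$$
A change of variable $u=qp$ (using left-invariance of Haar measure and the fact that the left translation $u\mapsto\phi(q^{-1}u)$ commutes with the left-invariant derivative $X$) shows that for every $\phi\in C_c^\infty(\G)$,
$$\langle Xf_\epsilon,\phi\rangle\;=\;\int_\G \rho_\epsilon(q)\,\bigl\langle X\uno_E,\,\phi(q^{-1}\cdot)\bigr\rangle\,d\vol_\G(q).$$
Since $X\uno_E\geq 0$ as a non-negative Radon measure and $\rho_\epsilon\geq 0$, this gives $Xf_\epsilon\geq 0$ distributionally; as $f_\epsilon$ is smooth, the inequality holds pointwise. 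The flow-line observation above then yields $f_\epsilon(p)\leq f_\epsilon(p\exp(tX))$ for every $p$ and every $t>0$. Letting $\epsilon\to 0$ along a subsequence for which $f_{\epsilon_k}\to \uno_E$ pointwise a.e., and using that right translation by $\exp(tX)$ preserves the Haar measure class so that a further subsequence gives a.e.\ convergence of the right-hand side as well, one obtains $\uno_E(p)\leq \uno_E(p\exp(tX))$ for a.e.\ $p$.

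The ``in particular'' claim follows by substitution: $\uno_{E\exp(X)}(p)=\uno_E(p\exp(-X))$, and setting $q=p\exp(-X)$, the main inequality applied at $q$ with $t=1$ becomes $\uno_E(q)\leq \uno_E(q\exp(X))=\uno_E(p)$. The only technical subtlety is verifying that the chosen (left) group convolution with $\rho_\epsilon$ is compatible with the left-invariant derivative $X$, which is precisely what the change-of-variable identity above accomplishes; this is the essential technical step rather than a genuine obstacle.
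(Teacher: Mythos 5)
Your proof is correct, but it reaches the conclusion by a genuinely different route than the paper. The paper never mollifies $\uno_E$: it proves the slightly more general claim that if $u\in L^1_{\rm loc}(\G)$ satisfies $Xu\geq 0$ distributionally then $u\circ\Phi_X(\cdot,t)\geq u$ a.e.\ for $t>0$, by showing that for every non-negative $g\in C^1_c(\G)$ the scalar function $t\mapsto\int_\G g\,(u\circ\Phi_X(\cdot,t))\,d\vol_\G$ is weakly increasing; the increment in $t$ is computed by moving the flow onto $g$ (legitimate because the flow of a left-invariant, hence divergence-free, field preserves $\vol_\G$), using the semigroup property, and Taylor-expanding, which turns the difference into $s\,\langle Xu,\,g\circ\Phi_X(\cdot,-t)\rangle+o(s)\geq o(s)$. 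You instead regularize first: the group convolution $f_\epsilon(p)=\int\uno_E(qp)\,\rho_\epsilon(q)\,d\vol_\G(q)$ commutes with the left-invariant derivative (your change-of-variables identity), so $Xf_\epsilon\geq 0$ pointwise, monotonicity holds along every flow line of the smooth $f_\epsilon$, and the inequality survives the a.e.\ limit $f_{\epsilon_k}\to\uno_E$. Both arguments hinge on the same two structural facts --- the flow of a left-invariant field is right translation, and Haar measure on a nilpotent group is bi-invariant --- but they distribute the work differently: the paper's duality computation is shorter and avoids any convergence-of-mollifiers step, while yours gives a more transparent pointwise mechanism (a genuine ODE monotonicity for $s\mapsto f_\epsilon(p\exp(sX))$) at the cost of justifying $f_\epsilon\to\uno_E$ a.e.\ and the compatibility of the exceptional null set with the right translation $p\mapsto p\exp(tX)$, both of which you handle correctly. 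Your derivation of the ``in particular'' clause by substituting $q=p\exp(-X)$ matches what the paper leaves implicit.
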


\begin{proof} 
Being $X$ a divergence free vector field on the manifold $\G$, endowed with a Haar measure   $\vol_\G$, we can prove that: if $u\in L^1_{\rm loc}(\G)$
satisfies $Xu\geq0$ in the sense of distributions, then, for all
$t>0$, $  u\circ\Phi_X(\cdot,t)\geq u$ $\vol_\G$-a.e. in $\G$. The statement of the lemma then follows immediately.

What we need to show is that, for any non-negative $g\in C^1_c(\G)$, the map
$t\mapsto\int_\G g u\circ\Phi_X(\cdot,t)\,d\vol_\G$ is increasing in
$t$. Indeed, the semigroup property of the flow, and the fact
that $X$ is divergence-free yield
\begin{eqnarray*}
&&\int_\G g \, u\circ\Phi_X(\cdot,t+s)\,d\vol_\G- \int_\G g \,
u\circ\Phi_X(\cdot,t)\,d\vol_\G \\&=& \int_\G u \,
g\circ\Phi_X(\cdot,-t-s)\,d\vol_\G- \int_\G u \,
g\circ\Phi_X(\cdot,-t)\,d\vol_\G \\&=&\int_\G u \,
g\circ\Phi_X(\Phi_X(\cdot,-s),-t)\,d\vol_\G-\int_\G u \,
g\circ\Phi_X(\cdot,-t)\,d\vol_\G\\&=& -s\int_\G u\,
X(g\circ\Phi_X(\cdot,-t))\,d\vol_\G+o(s)\\&=& s\int_\G 
(g\circ\Phi_X(\cdot,-t)) \,Xu\,d\vol_\G+o(s),
\end{eqnarray*}
which, recalling that $Xu \geq 0$, yields that $t\mapsto\int_\G g \, u\circ\Phi_X(\cdot,t)\,d\vol_\G$ is (weakly) increasing in
$t$.
\end{proof}

\begin{lemma}\label{lem5}
Let $\G$ be any Carnot group. Let $E\subset \G$. Let $Y_1,\ldots, Y_k\in Lie(\G)$.
Assume that $Y_j \uno_E\geq 0$, for all $j=1,\ldots, k$.
Then there exists $\tilde E$ such that $\vol_{\G}(E\Delta\tilde E)=0$ and, for all $p\in \G$ and  $j=1,\ldots, k$,
there exists $T \in [-\infty, +\infty]$ such that
$$\{t\in \R\;:\; p\exp(tY_j)\in \tilde E\}=(T,+\infty).$$
\end{lemma}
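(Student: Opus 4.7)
The main tool is Lemma \ref{cortdp'}, which shows that for each $j$ and each $t>0$, the right-translation flow $\Phi_{Y_j}(\cdot,t)$ carries $E$ into $E$ modulo null sets. The plan is to pick a pointwise representative $\tilde E$ of $E$ on which this flow-monotonicity holds along every orbit, and the natural candidate is the density-$1$ representative
$$\tilde E_0 := \Bigl\{p\in\G : \lim_{r\to 0^+} \vol(B(p,r)\cap E)/\vol(B(p,r)) = 1\Bigr\},$$
which satisfies $\vol(E\triangle \tilde E_0)=0$ by Lebesgue differentiation on the doubling space $(\G,\vol_\G)$.

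The next step is to prove that $\tilde E_0$ is forward-invariant: if $p\in \tilde E_0$, $t>0$, and $j\in\{1,\dots,k\}$, then $p\exp(tY_j)\in\tilde E_0$. The right-translation $R(q):=q\exp(tY_j)$ is a smooth diffeomorphism and preserves $\vol_\G$, since $\G$ is nilpotent and its Haar measure is bi-invariant (as recalled just before Definition 1.3). Lemma \ref{cortdp'} gives $E\subseteq R^{-1}(E)$ modulo null sets. Smoothness of $R^{-1}$ yields a constant $C>0$ (depending on $j,t$) so that $R^{-1}(B(p\exp(tY_j),r))\subseteq B(p,Cr)$ for all small $r>0$, while volume-preservation gives $\vol(R^{-1}(B(p\exp(tY_j),r)))=\vol(B(p\exp(tY_j),r))$. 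Passing to complements and combining, one obtains
$$\frac{\vol(B(p\exp(tY_j),r)\setminus E)}{\vol(B(p\exp(tY_j),r))} \leq \frac{\vol(B(p,Cr)\setminus E)}{\vol(B(p\exp(tY_j),r))} \leq C' \cdot \frac{\vol(B(p,Cr)\setminus E)}{\vol(B(p,Cr))},$$
which tends to $0$ as $r\to 0$ because $p\in \tilde E_0$. Hence $p\exp(tY_j)\in\tilde E_0$. Consequently $S_{p,j}(\tilde E_0) := \{t : p\exp(tY_j)\in\tilde E_0\}$ is upward-closed for every $p,j$, and so of the form $\R$, $\emptyset$, $[T,+\infty)$, or $(T,+\infty)$.

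To enforce strict openness at a finite left endpoint, I would refine to
$$\tilde E := \bigl\{p\in \tilde E_0 : \forall j,\,\exists\,\epsilon > 0 \text{ with } p\exp(-\epsilon Y_j)\in \tilde E_0\bigr\}.$$
The set $\tilde E_0\setminus \tilde E$ meets each $Y_j$-orbit in at most one point, so Fubini along orbits gives $\vol(\tilde E_0\setminus \tilde E)=0$ and hence $\vol(E\triangle \tilde E)=0$. The main obstacle is to show that $\tilde E$ still enjoys the forward-invariance of $\tilde E_0$: for $p\in\tilde E$, $t>0$, and $j\neq i$, one must produce $\epsilon>0$ with $p\exp(tY_i)\exp(-\epsilon Y_j)\in\tilde E_0$. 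Since the flows do not commute, BCH gives
$$\exp(tY_i)\exp(-\epsilon Y_j)=\exp(-\epsilon Y_j)\exp(tY_i)\cdot\exp\!\bigl(\epsilon t[Y_j,Y_i]+O(\epsilon^2 t+\epsilon t^2)\bigr),$$
and absorbing this commutator correction requires the extra monotone directions provided by Proposition \ref{AKL prop} together with the nilpotence of $\G$. This BCH bookkeeping is, I expect, where the main technical work concentrates.
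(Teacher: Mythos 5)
Your construction of the Lebesgue representative and your proof of its forward-invariance under the flows $\Phi_{Y_j}(\cdot,t)$, $t>0$, is exactly the paper's argument: density-one points plus Lemma \ref{cortdp'} plus the fact that right translations preserve the Haar measure. The only difference is cosmetic: the paper fixes a \emph{right}-invariant metric, for which the flow of a left-invariant field is an isometry, so the balls match up exactly ($C=1$) and no local Lipschitz constant or doubling comparison is needed; your version with a generic metric works too, provided you note that balls of comparable radii at the two centers have comparable volume (immediate for a homogeneous metric on a Carnot group). Up to this point the proposal is correct, and it establishes precisely what the paper's proof establishes: each section $\{t: p\exp(tY_j)\in\tilde E_0\}$ is upward-closed, hence of the form $\R$, $\emptyset$, $[T,+\infty)$ or $(T,+\infty)$.

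Your third step is where the proposal is incomplete, by your own admission, and the obstruction you name is real: for the refined set $\tilde E$ the witness for direction $j\neq i$ after flowing along $Y_i$ would be $p\exp(-\delta Y_j)\exp(tY_i)$, which differs from $p\exp(tY_i)\exp(-\delta Y_j)$ by a commutator correction that there is no reason to be able to absorb using monotone directions for arbitrary $Y_j$ in an arbitrary Carnot group; the BCH bookkeeping you defer is not a routine verification and I do not see how to close it at this level of generality. (Also, $\tilde E_0\setminus\tilde E$ is a finite union $\cup_j B_j$ of endpoint sets; it is $B_j$ that meets each $Y_j$-orbit in at most one point, not the union, though the union is still null by applying Fubini direction by direction.) You should know, however, that the paper does not perform this step at all: its proof stops at forward-invariance and silently identifies ``upward-closed'' with ``open half-line.'' The discrepancy is harmless for the paper because the lemma is only ever exploited through Observation \ref{osservazione} and Proposition \ref{prop1}, where the $Y_j$ form a basis of $Lie(\G)$; there one shows directly that every point of the topological boundary has upper density strictly less than one, so $\tilde E_0$ is open, and an open set automatically has open sections along every flow line. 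So the efficient way to finish is not your refinement but either to weaken the conclusion to allow $[T,+\infty)$ (which suffices downstream), or to invoke openness of $\tilde E_0$ in the basis case.
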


\proof
In a Carnot group, such as $\G$, Haar measures are both left- and right-invariant. In this proof we will make use of the fact that $\vol_{\G}$ is right-invariant. Flows of left-invariant vector fields are right translations, thus isometries for such any right-invariant distance. The balls $B_r$ considered in this proof are to be understood with respect to a fixed right-invariant metric. 

Let $\tilde E$ be the Lebesgue representative of $E$, i.e. the set of points having density $1$:
$$x \in \tilde E \Leftrightarrow \lim_{r \to 0} \frac{\vol_\G(B_r(x) \cap E)}{\vol_\G(B_r(x))} =1.$$

By the Lebesgue-Besicovitch Differentiation Theorem, $\tilde E$ and $E$ agree $\vol_\G$-a.e. We claim that $\tilde E$ fulfils the requirements of Lemma \ref{lem5}. Indeed, what we need to prove is: let $p \in \tilde E$, $X$ a left-invariant vector field such that $X \uno_E\geq 0$: then for any $t>0$ the point $y=p\exp(tX)$ belongs to $\tilde E$.

The vector field $X$ is smooth and the flow $\Phi_X(\cdot,t)$ is an isometry for the right-invariant metric, so it sends balls to balls of the same size. In the following denote by $y$ the point $\Phi_X(\cdot,t)(p)$. 
We assume $p \in \tilde E$, so $\displaystyle \lim_{r \to 0} \frac{\vol_\G(B_r(p) \cap E)}{\vol_\G(B_r(p))}=1$. 
By the invariance of $\vol_\G$ along the flow we have $\vol_\G(B_r(p) \cap E)=\vol_\G(B_r(y) \cap E\exp(X))$ and $\vol_\G(B_r(p)) = \vol_\G(B_r(y))$. With the aid of Lemma \ref{cortdp'}, we then have $\vol_\G(B_r(p) \cap E) \leq \vol_\G(B_r(y) \cap E)$. Altogether we can write $$\frac{\vol_\G(B_r(y) \cap E)}{\vol_\G(B_r(y))} \geq \frac{\vol_\G(B_r(p) \cap E)}{\vol_\G(B_r(p))}.$$
Therefore we have 
$$\lim_{r \to 0} \frac{\vol_\G(B_r(p) \cap E)}{\vol_\G(B_r(p))}=1
\quad
 \implies
\quad
\lim_{r \to 0} \frac{\vol_\G(B_r(y) \cap E)}{\vol_\G(B_r(y))}=1,$$   and the lemma is proved.
\qed

\begin{observation}\label{osservazione}
If $Y_1, ... , Y_k$ form a basis of $Lie(\G)$, then the set $\tilde E$ is actually open. Indeed, let $q$ be a point on the topological boundary of $E$ and let us show that the upper density of $E$ at $x$ is striclty less than one. Any direction in the convex envelope of some given monotone directions is in turn monotone, thus the whole cone $\hat{Y}$ obtained as convex envelope of $Y_1, ... , Y_k$ is made of monotone directions. Under the assumption that $Y_1, ... , Y_k$ form a basis of $Lie(\G)$, this cone has non-empty interior. The complement of $E$ is also a set with constant horizontal normal and contains a sequence of points $q_n$ converging to $q$. By means of lemma \ref{lem5}, the cone $q_n-\hat{Y}$ is all contained in the complement of $E$. The interior of $\hat{Y}$ is non-empty, so as $q_n \to q$ the upper density of $E$ at $q$ must be stricly less than $1$.

Hence, by Lemma \ref{lem2}, every set in the Engel group that has normal $X_2$, has a representative that is open and is satisfies the conclusion of Lemma  \ref{lem5}.
\end{observation}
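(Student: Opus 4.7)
The plan is to show that the Lebesgue representative $\tilde E$ coincides with the Euclidean interior $\Int E$, which is manifestly open. The inclusion $\Int E \subset \tilde E$ is trivial: if $p \in \Int E$, then $B_r(p) \subset E$ for small $r$, so the Euclidean density of $E$ at $p$ equals $1$ and $p \in \tilde E$. The content of the argument is the reverse inclusion $\tilde E \subset \Int E$, which I plan to obtain by showing that every $q \in \partial E$ has Euclidean upper density of $E$ strictly less than $1$, hence cannot lie in $\tilde E$.

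The key algebraic ingredient is that monotone directions are closed under positive linear combinations: if $Y_j \uno_E \ge 0$ for each $j$ and $\alpha_j \ge 0$, then $\bigl(\sum_j \alpha_j Y_j\bigr) \uno_E = \sum_j \alpha_j\, Y_j \uno_E \ge 0$ as a sum of non-negative measures. Hence every vector in the convex cone $\hat Y := \{\sum_j \alpha_j Y_j : \alpha_j \ge 0\}$ is monotone for $E$, and since $Y_1,\ldots,Y_n$ is a basis of $Lie(\G)$, this cone has non-empty interior. Moreover, $Z\uno_E\ge 0$ is equivalent to $(-Z)\uno_{\G\setminus E}\ge 0$, so every vector in $-\hat Y$ is a monotone direction for the complement $\G\setminus E$.

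To control the density at $q \in \partial E$, I would apply Lemma \ref{lem5} to $\G \setminus E$ with the monotone directions in $-\hat Y$. Because the Lebesgue representative is canonically determined by the set itself, the conclusion of Lemma \ref{lem5} holds for \emph{every} $Z \in \hat Y$ simultaneously, yielding a Lebesgue representative $\widetilde{\G \setminus E}$ of $\G \setminus E$ such that for each $p \in \widetilde{\G \setminus E}$ the whole translated cone $\{p \exp(-Z) : Z \in \hat Y \setminus \{0\}\}$ is contained in $\widetilde{\G \setminus E}$. Since $q \in \partial E$, there is a sequence $q_n \to q$ with $q_n \in \widetilde{\G \setminus E}$, and each set $q_n \exp(-\hat Y)$ lies in the complement of $\tilde E$. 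The exponential map of the simply connected nilpotent group $\G$ is a global diffeomorphism and sends $\Int \hat Y$ to an open set; so for $n$ large the translated open cones $q_n \exp(-\Int \hat Y)$ intersect every small Euclidean ball around $q$ in a set whose volume is bounded below by a fixed positive fraction of the volume of the ball.

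Consequently the lower density of $\G \setminus E$ at $q$ is strictly positive, the upper density of $E$ at $q$ is strictly less than $1$, and $q \notin \tilde E$; combined with $\Int E \subset \tilde E$ this gives $\tilde E = \Int E$, which is open. The second sentence of the observation is then immediate from Lemma \ref{lem2}, which provides for any set with normal $X_2$ in the Engel group the four monotone directions $X_1, X_2, X_4, X_2 + 2X_3 + 2X_4$ that span $Lie(\G)$. The main obstacle I anticipate is the quantitative density estimate at the end of the third paragraph: it requires a uniform Jacobian bound for the exponential map near the origin, together with a geometric argument ensuring that the translates $q_n \exp(-\Int \hat Y)$ occupy a definite proportion of every small Euclidean ball about $q$ once $n$ is large enough.
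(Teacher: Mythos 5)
Your mechanism is the same as the paper's: positive combinations of monotone directions are monotone (linearity of $Z\mapsto Z\uno_E$), the cone $\hat Y$ has non-empty interior because the $Y_j$ form a basis, $-\hat Y$ consists of monotone directions for the complement, and backward cones emanating from points of the complement's Lebesgue representative near $q$ occupy a definite fraction of small balls around $q$, forcing the upper density of $E$ at $q$ below $1$. Your extra care about applying Lemma \ref{lem5} to all of $\hat Y$ simultaneously (the representative is canonical) and about the Jacobian of $\exp$ is welcome.

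However, the reduction you build this on is flawed: the identity $\tilde E=\Int E$ is false for general measurable $E$, and the intermediate claim ``every $q\in\partial E$ has upper density $<1$'' fails with it. Take $E=H\setminus D$ where $H$ is a half-space with the required normal and $D$ is a dense null set: the distributional derivatives, hence the monotone directions, are unchanged, $\Int E=\emptyset$ while $\tilde E=\Int H$, and every point of $\Int H$ lies in $\partial E$ yet has density $1$. The step that breaks is precisely ``since $q\in\partial E$, there is a sequence $q_n\to q$ with $q_n\in\widetilde{\G\setminus E}$'': topological boundary points of $E$ only have points of $\G\setminus E$ nearby, not points of its Lebesgue representative. (The paper's own wording has the same imprecision, speaking of the boundary of $E$ rather than of $\tilde E$.) The fix is to drop $\Int E$ altogether and argue directly that $\tilde E$ is open: setting $\tilde F:=\widetilde{\G\setminus E}$, your cone-density argument shows exactly that $\tilde E\cap\overline{\tilde F}=\emptyset$; since $\tilde E\cup\tilde F$ has full measure, any $p\in\tilde E$ admits a ball $B$ disjoint from $\tilde F$, whence $\vol(B\setminus E)=0$, every point of $B$ has density $1$, and $B\subseteq\tilde E$.
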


%
%

\proof[Proof of Proposition \ref{prop1}]
By the lemma just proved, we can assume the following: let $E\subset \G$, where $\G$ is a Lie group, let $Y_1,\ldots, Y_n$ be a basis of $Lie(\G)$ such that for all $p\in \partial E$ and  $j=1,\ldots, k$,
we have that
$$\{t\in \R\;:\; p\exp(tY_j)\in \tilde E\}=(0,+\infty).$$
We want to show that $\partial E$ is locally a Lipschitz graph.

\medskip

Fix $p_0\in\partial E$. For all $p\in\G$ consider the set 
$$C_p:=\left\{p\exp\left(\sum_{j=1}^n \alpha_jY_j\right)\;:\;\alpha_j>0\right\}.$$
Fix a (smooth) coordinate chart $\varphi:U\to \R^n$ from a compact neighborhood of $p_0$.
Since $\varphi$ is smooth and $C_p$ change smoothly in $p$, then, for all $p\in U$, the set $\varphi(C_p)$ changes smoothly. Thus one can find a fixed set $C\subseteq \R^n$ of the form
$$C :=\left\{ \sum_{j=1}^n \alpha_jv_j\;:\;\alpha_j>0\right\},$$
for some basis $v_1,\ldots, v_n$ of $\R^n$, such that
$$ \varphi(\partial E)\cap(x+C)=\emptyset,\qquad \forall x\in \varphi(\partial E).$$
Note that consequently we also have that $ \varphi(\partial E)\cap(x-C)=\emptyset.$
By a standard argument, e.g., see \cite[Theorem 2.61, page 82]{Ambook}, one can write $ \varphi(\partial E)$ as a graph in any direction $v\in C$ with respect to any hyperplane $\Pi$ such that 
$\Pi\cap(C\cup-C)=\emptyset$.
\qed

\begin{observation}
As a byproduct we get of course that the set $E$ has \textit{rectifiable} boundary. 
\end{observation}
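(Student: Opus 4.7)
The plan is to deduce the observation at once from Proposition \ref{prop1}. Since $E$ is there shown to be equivalent to an Euclidean Lipschitz domain $\tilde E \subset \G$, its topological boundary is, in any fixed smooth chart on $\G$ (for instance exponential coordinates of the second kind), locally the graph of a Lipschitz function of $n-1$ variables, where $n = \dim \G$. A classical theorem — Rademacher combined with the fact that Lipschitz graphs are countable unions of Lipschitz images of subsets of $\R^{n-1}$ — then yields that such a graph is $(n-1)$-rectifiable in the Euclidean sense.

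Concretely, I would cover $\partial \tilde E$ by countably many compact neighborhoods $U_i$ on each of which the cone argument in the proof of Proposition \ref{prop1} applies. On each $U_i$, a smooth chart $\varphi_i$ produces a hyperplane $\Pi_i \subset \R^n$, a direction $v_i \in C$, and a Lipschitz function $f_i : \Pi_i \to \R$ with $\varphi_i(\partial \tilde E \cap U_i) = \{ y + f_i(y) v_i : y \in \Pi_i\}$. The graph parametrization $y \mapsto \varphi_i^{-1}(y + f_i(y) v_i)$ is then Lipschitz as a composition of a Lipschitz graph map with a smooth (hence locally Lipschitz) diffeomorphism, so $\partial \tilde E \cap U_i$ is a Lipschitz image of an open subset of $\R^{n-1}$. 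A countable union then furnishes the required $(n-1)$-rectifiable decomposition.

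There is no substantial obstacle once Proposition \ref{prop1} is available: uniformity of the cone criterion together with smoothness of the ambient charts transports everything to ordinary Euclidean rectifiability. The only conceptual point worth making explicit is that \emph{rectifiable boundary} is here meant in the Euclidean sense (consistent with the Lipschitz-domain formulation of Theorem \ref{teorema1}); intrinsic subRiemannian rectifiability of $\partial \tilde E$, in the sense of Franchi--Serapioni--Serra Cassano, is a related but distinct matter, one that ties in with the intrinsic horizontal graph representations studied in the subsequent sections of the paper.
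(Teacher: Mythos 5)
Your proposal is correct and follows exactly the route the paper intends: the observation is stated as an immediate byproduct of Proposition \ref{prop1}, since a set whose boundary is locally a Lipschitz graph (transported through smooth charts) is Euclidean $(n-1)$-rectifiable by the standard covering argument you describe. Your added remark that rectifiability is meant here in the Euclidean sense, as opposed to the intrinsic Franchi--Serapioni--Serra Cassano notion, is a worthwhile clarification but does not change the substance of the argument.
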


\subsection{Further regularity in a model of the Engel group}
\label{inamodel}

We will fix, for this subsection and the next, the following model of the Engel group and work in it, proving that a set with constant horizontal normal is, in suitable coordinates, the subgraph of an entire Lipschitz function.

On $\R^4$ with coordinates $x_1, x_2, x_3, x_4$, we consider the following vector fields:
\begin{eqnarray} \label{VFE}
&&X_1 = \partial_1, \nonumber\\ 
&&X_2 = \partial_2 +x_1 \partial_3 + \dfrac{x^2_1}{2} \partial_4 ,\\\nonumber
&&X_3 = \partial_3 + x_1 \partial_4 ,\\\nonumber
&&X_4 =  \partial_4 .
\end{eqnarray}
Such vector fields form a Lie algebra which is $4$-dimensional. Their      only non-trivial 
brackets are 
\begin{equation} \label{engel5 vecchia}
[X_1 , X_2 ] = X_3 ,\qquad [X_1 , X_3 ] = X_4 .
\end{equation}
Therefore, such an algebra is isomorphic to the {\em Engel Lie algebra}.
Using the general theory of (nilpotent) Lie groups one can prove that there exists a (unique) product on $\R^4$ for which the vector fields $X_1, X_2, X_3 , X_4$ are left-invariant (and therefore a basis of the Lie algebra).

Actually, the coordinates for the Engel group that we are using are called the exponential coordinates of the second kind. Namely, if $X_1, X_2, X_3 , X_4$ are a basis of the Lie algebra that satisfies \eqref{engel5 vecchia}, then the map
$$(x_1,x_2,x_3,x_4)\mapsto \exp(x_4X_4)\exp(x_3X_3)\exp(x_2X_2)\exp(x_1X_1)$$
is a diffeomorphism between $\R^4$ and the Engel group. Moreover, the vectors $X_1, X_2, X_3 , X_4$ are pulled back to $\R^4$ to the vector fields as defined in \eqref{VFE}.

Recall that in a Lie group $G$ there is a differential geometric interpretation for the product between an element $p\in G$ with the image $\exp(tX)$ of a multiple of a left-invariant vector field $X$ via the exponential map.
Indeed, one has the formula
\begin{equation}\label{exponential flow}
p\cdot\exp(tX)=\Phi_X^t(p),
\end{equation}
where $\Phi_X^t(p)$ denotes the flow of $X$ after   time $t$ starting from $p$.

The aim of the section is to study those sets that are invariant in the direction of $X_1$ and are monotone in the direction of  $X_2$.
Namely, let $E\subseteq \R^4$ be an open set (we always take the Lebesgue representative), we say that $E$ is {\em $X_2$-calibrated}  if the following two properties holds:
\begin{description}
\item[$X_1$-invariance] if $p\in E$ then, for any $t\in\R$,  $p\exp(tX_1)\in E$;
\item[$X_2$-monotonicity]  for all $p\in\R^4$, the set $\{t\in\R:p\exp(tX_2)\in E\}$ is an open half-line of the form $(T,+\infty) $ for some $T\in\{-\infty\}\cup\R\cup\{+\infty\}$.
\end{description}

Therefore, if $E$ is an $X_2$-calibrated set then $E$ has constant normal equal to $X_2$, i.e., $X_2 \uno_E\geq 0$ and $X_1 \uno_E= 0$. Viceversa,
by Lemma \ref{lem5} and Observation \ref{osservazione}, any set $E$ with   normal   $X_2$ admits an  $X_2$-calibrated set $\tilde E$ that is equivalent to $E$.

\medskip

By formula \eqref{exponential flow}, we can calculate a product $p\cdot\exp(tX)$ without knowing an explicit formula for the product. 
Let us consider the two cases
when $X$ is $X_1$ or $X_2$ as above.

Regarding the flow of $X_1$, we need to solve the ODE
\begin{equation}\label{ODE flow 1}
 \left\{ \begin{array}{ccl}
 \gamma(0)&=&p \\ \\
\dot\gamma(t) &=& (X_1)_{\gamma(t)}.
\end{array} \right.
\end{equation}
Writing $\gamma=(\gamma_1,\gamma_2,\gamma_3,\gamma_4)$ and using the definition of $X_1$, the second inequality becomes
$(\dot\gamma_1(t),\dot\gamma_2(t),\dot\gamma_3(t),\dot\gamma_4(t))=\partial_1=(1,0,0,0).$
Integrating, we have
$$\gamma_1(t)=p_1+t,\; \gamma_2(t)=p_2,\; \gamma_3(t)=p_3,\; \gamma_4(t)=p_4.$$
Thus,
$$p\cdot\exp(tX_1)= (p_1+t, p_2, p_3 ,p_4). $$

Regarding the flow of $X_2$, we consider the ODE
\begin{equation}\label{ODE flow 2}
 \left\{ \begin{array}{ccl}
 \gamma(0)&=&p \\ \\
\dot\gamma(t) &=& (X_2)_{\gamma(t)}=(0,1, \gamma_1(t),(\gamma_1(t))^2 / 2).
\end{array} \right.
\end{equation}
Integrating, we have
$$\gamma_1(t)=p_1,\; \gamma_2(t)=p_2+t,\; \gamma_3(t)=p_3+p_1 t,\; \gamma_4(t)=p_4+p_1^2 t/2.$$
Thus,
\begin{equation}\label{Flow X2}
p\cdot\exp(tX_2)= (p_1, p_2+t, p_3 +p_1 t,p_4+p_1^2 t/2). 
\end{equation}

Thus we replace the previous definition:
\begin{definition}[$X_2$-calibration]
An open  set $E\subseteq \R^4$ is called {\em $X_2$-calibrated} if
\begin{description}
\item[i) ]  if $p\in E$ then, for any $t\in\R$,  $p+(t,0,0,0)\in E$;
\item[ii) ]  for all $p\in\R^4$ there exists $T\in\{-\infty\}\cup\R\cup\{+\infty\}$ such that
$$\{t\in\R:p +(0,t,p_1 t,p_1^2t/2)\in E\}=(T,+\infty).$$
\end{description}
\end{definition}
Since the set $E$ is assumed to be open, condition ii) is equivalent to the following  condition:
\begin{description}
\item[ii') ] 
$$ p\in E, t>0\implies p_t:=p+(0,t,p_1 t,p_1^2t/2)\in E $$
\end{description}

\medskip

\begin{example}\label{example}
Let $g:\R\to\R$ be a non-increasing and upper semi-continuous function.
Consider the set $$E:=\{x\in\R^4 : x_2 > g(x_4)\}.$$
Since $g$ is assumed upper semi-continuous, then $E$ is an open set. Then we claim that the set $E$ is $X_2$-calibrated.
Indeed, such a fact can be seen as a consequence of Proposition \ref{prop:characterize} from next section in which we give a characterization of sets with constant normal. However, we present here a direct and detailed proof of such a claim. Property i) is obvious, for such an $E$, since in the definition of $E$ the variable $x_1$ does not appear. 

Let us show property ii'). If $p\in E$, then $ p_2 > g(p_4)$. Now, if $t>0$, we have that $p_2+t>p_2$ and $g(p_4+p_1^2t)\leq g(p_4)$, being $g$ 
non-increasing.
Thus, $p_2+t -g(p_4+p_1^2t)\geq p_2 - g(p_4)> 0$ and so $p+(0,t,p_1t,p_1^2t)\in E$.
QED
\end{example}

Now we provide the result that a set of finite perimeter with constant horizontal normal is, in suitable coordinates, the upper graph of an entire Lipschitz function. 

\medskip

\begin{lemma}
\label{lemma:esisteG}
Let $E\subset \R^4$ be any an open $X_1$-invariant and $X_2$-monotone set.
Denote by $\bar\R$ the extended real line, i.e., $\overline\R:=\{-\infty\}\cup\R\cup\{+\infty\}.$
Then there exists a function $G:\R^2\to\overline \R$ such that
$$E=\{x\in\R^4\;:\;x_2>G(x_3,x_4)\}.$$
\end{lemma}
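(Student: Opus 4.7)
The whole statement will come from combining the two defining properties of $X_2$-calibration evaluated at a convenient choice of base point. The plan is to reduce to a problem on $\R^3$ via the $X_1$-invariance, then read off the half-line structure from the $X_2$-monotonicity.

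First I would use condition i) to note that $E$ does not depend on the coordinate $x_1$: the translation $p \mapsto p + (t,0,0,0)$ preserves $E$ for every $t \in \R$, so
\[
(x_1,x_2,x_3,x_4) \in E \iff (0,x_2,x_3,x_4) \in E.
\]
Define therefore the ``slice'' $E_0 \subset \R^3$ by $(x_2,x_3,x_4) \in E_0 \iff (0,x_2,x_3,x_4) \in E$, so that $E = \R \times E_0$ in the coordinates $(x_1;x_2,x_3,x_4)$.

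Next I would apply the $X_2$-monotonicity condition ii') at base points of the form $p = (0,p_2,p_3,p_4)$ (i.e.\ with $p_1 = 0$). The flow formula \eqref{Flow X2} collapses in this case to
\[
p_t = (0,\,p_2+t,\,p_3,\,p_4),
\]
since the terms $p_1 t$ and $p_1^2 t/2$ vanish. Thus ii') applied to such $p$ says exactly that if $(p_2,p_3,p_4) \in E_0$ and $t>0$, then $(p_2+t,p_3,p_4) \in E_0$. In other words, every ``vertical fiber''
\[
S_{a,b} \;:=\; \{s \in \R : (s,a,b) \in E_0\} \qquad ((a,b) \in \R^2)
\]
is upward-closed in $\R$. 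Since $E$ (hence $E_0$) is open, each $S_{a,b}$ is also an open subset of $\R$. An open upward-closed subset of $\R$ is necessarily $\emptyset$, all of $\R$, or an open half-line $(T,+\infty)$ with $T \in \R$.

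Finally I would define $G:\R^2 \to \overline\R$ by
\[
G(a,b) \;:=\; \inf S_{a,b},
\]
with the conventions $\inf \emptyset = +\infty$ and $\inf \R = -\infty$. By the trichotomy above, $S_{a,b} = (G(a,b),+\infty)$ for every $(a,b)$, and hence
\[
E = \{x \in \R^4 : x_2 > G(x_3,x_4)\},
\]
as required. There is no real obstacle here: the lemma is essentially a reformulation of conditions i) and ii') once one observes that the $X_2$-flow from a point with $p_1 = 0$ is purely vertical in the $x_2$-direction, so that $X_1$-invariance plus $X_2$-monotonicity means the level structure of $E$ in $x_2$ is governed only by $(x_3,x_4)$.
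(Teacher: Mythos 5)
Your proof is correct and follows essentially the same route as the paper: both reduce to the slice $\{x_1=0\}$ via $X_1$-invariance, observe that the $X_2$-flow from a point with $p_1=0$ moves only in the $x_2$-coordinate, and define $G(x_3,x_4)$ as the infimum of the corresponding fiber with the conventions $\inf\emptyset=+\infty$, $\inf\R=-\infty$. The only cosmetic difference is that you rederive the half-line structure of each fiber from condition ii') plus openness, whereas the paper invokes the $X_2$-monotonicity definition (condition ii)) directly; both are fine.
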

\proof
For each $x_3,x_4\in\R$, define
$G(x_3,x_4):=\inf\{ x_2:(0,x_2,x_3,x_4)\in E\}$. Here $\inf\{\emptyset\}=+\infty$.
Whenever such an infimum is finite, then it is not realized, since $E$ is open.
Since $E$ is  $X_2$-monotone and $(0,x_2,x_3,x_4)\cdot \exp(tX_2)=(0,x_2+t,x_3,x_4),$
we have that
$$E\cap(\{0\}\times\R\times\{x_3\}\times\{x_4\})= \{0\}\times(G(x_3,x_4),+\infty)\times\{x_3\}\times\{x_4\}.$$
For any $x\in\R^4$, since $E$ is $X_1$-invariant, we have that
$$x\in E\iff (0,x_2,x_3,x_4)\in E\iff x_2>G(x_3,x_4).$$

The upper semi-continuity of $G$ follows because $E$ is open.
\qed

\begin{lemma}
\label{lem:semispazio}
Let $\G$ be the Engel group in exponential coordinates of second kind with Lie algebra as in \eqref{VFE}.
Let $E\subset \G$ be an open $X_2$-calibrated set.  
Assume that there exists  $\tilde p\in E$
such that
$\tilde p+(0,\R ,0,0)\in E.$
Then 
$$\{x\in \R^4 : x_4>\tilde p_4\}\subseteq E.$$
\end{lemma}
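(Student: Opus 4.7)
The plan is to sweep out the half-space $\{x_4>\tilde p_4\}$ starting from the seed line $\tilde p+(0,\R,0,0)\subseteq E$ by composing positive-time flows along a family of monotone directions. The available directions are $\pm X_1$ (coming from $X_1\uno_E=0$), together with $X_2$ and $X_4$ (from Lemma \ref{lem2}), and the one-parameter family
$$Z_\tau:={\rm Ad}_{\exp(\tau X_1)}(X_2)=X_2+\tau X_3+\tfrac{\tau^2}{2}X_4,\qquad \tau\in\R,$$
produced by Proposition \ref{AKL prop}. A direct computation analogous to \eqref{Flow X2} gives that the $Z_\tau$-flow from $q$ at time $u>0$ reaches
$$\bigl(q_1,\;q_2+u,\;q_3+(q_1+\tau)u,\;q_4+\tfrac{(q_1+\tau)^2}{2}u\bigr).$$

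The cascade proceeds in three stages. First, two-sided $X_1$-invariance thickens the seed line into the plane $\tilde p+(\R,\R,0,0)\subseteq E$. Next, positive $X_4$-flow adds any $t'>0$ to the fourth coordinate, yielding $\tilde p+(\R,\R,0,t')\subseteq E$. Finally, from each base point $q=\tilde p+(r,s,0,t')$ a positive $Z_\tau$-flow of duration $u$ arrives at
$$\bigl(\tilde p_1+r,\;\tilde p_2+s+u,\;\tilde p_3+(\tilde p_1+r+\tau)u,\;\tilde p_4+t'+\tfrac{(\tilde p_1+r+\tau)^2}{2}u\bigr)\in E.$$

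Given a target $x$ with $a:=x_4-\tilde p_4>0$, set $r:=x_1-\tilde p_1$ and $\sigma:=\tilde p_1+r+\tau=x_1+\tau$, which is free in $\R$ through the choice of $\tau$. The constraints reduce to $\sigma u=x_3-\tilde p_3$ and $t'=a-\sigma^2 u/2>0$, with $u,t'>0$; the parameter $s$ is then adjusted to $x_2-\tilde p_2-u$. If $x_3=\tilde p_3$ take $\sigma=0$ and any $u>0$. Otherwise take $\sigma$ of the same sign as $x_3-\tilde p_3$ but of sufficiently small absolute value, so that $u=(x_3-\tilde p_3)/\sigma>0$ and $\sigma^2u=\sigma(x_3-\tilde p_3)$ can be made as small as desired, keeping $t'>0$. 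In every case $x$ is realized by a composition of positive-time monotone flows emanating from $E$, hence $x\in E$.

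The only subtle point is that no single monotone direction reaches all of $\{x_4>\tilde p_4\}$: the $Z_\tau$-flow inevitably couples an $x_3$-shift of $(q_1+\tau)u$ with the \emph{positive} $x_4$-shift $(q_1+\tau)^2u/2$, and the role of $X_4$-monotonicity is precisely to absorb this excess. This is exactly why the slack $a>0$ (i.e.\ the strict inequality $x_4>\tilde p_4$) is needed in the hypothesis.
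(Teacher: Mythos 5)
Your overall strategy --- sweeping the half-space from the seed line $\tilde p+(0,\R,0,0)$ by composing $X_1$-translations with positive-time flows of $X_2$-type fields, and using the unbounded freedom in the $x_2$-coordinate along the seed line to absorb the flow times --- is the same as the paper's, and your final parameter count is correct. The flows of $Z_\tau$ cause no trouble at the pointwise level: since $p\exp(uZ_\tau)=p\exp(\tau X_1)\exp(uX_2)\exp(-\tau X_1)$, a positive-time $Z_\tau$-flow is a composition of the two moves permitted by the very definition of $X_2$-calibration, which is what your explicit flow formula reflects; you should justify it that way rather than by citing Proposition \ref{AKL prop}, which is a statement about distributions.

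The gap is your second stage, where you assert that the open set $E$ is \emph{pointwise} monotone under the $X_4$-flow, citing Lemma \ref{lem2}. That lemma only gives the distributional inequality $X_4\uno_E\ge 0$; upgrading this to ``$q\in E,\ t'>0\Rightarrow q+(0,0,0,t')\in E$'' is exactly the content of Lemma \ref{lem5}, which delivers it only for the Lebesgue representative $\hat E\supseteq E$, a set that may strictly contain $E$. Since the conclusion to be proved is a containment in the given open set $E$ itself, you cannot silently pass to $\hat E$; and unlike the $Z_\tau$'s, $X_4$ is not a conjugate of $X_2$ but only a limit of the $Z_\tau$'s, so its pointwise monotonicity genuinely requires an argument. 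Two repairs are available. First, use openness: for $q\in E$ and $\delta>0$ small, $q'=q+(0,-\delta^2/(2t'),-\delta,0)\in E$, and the $Z_\tau$-flow from $q'$ with $q_1+\tau=2t'/\delta$ and $u=\delta^2/(2t')$ lands exactly at $q+(0,0,0,t')$. Second, and more simply, eliminate $X_4$ altogether: instead of taking $\sigma$ small and dumping the excess fourth coordinate into a prior $X_4$-step, solve $\sigma u=x_3-\tilde p_3$ and $\sigma^2u/2=x_4-\tilde p_4$ exactly, i.e. $\sigma=2(x_4-\tilde p_4)/(x_3-\tilde p_3)$ and $u=(x_3-\tilde p_3)^2/\bigl(2(x_4-\tilde p_4)\bigr)>0$; this is precisely the paper's proof, and the degenerate case $x_3=\tilde p_3$ (which the paper also leaves implicit) is then recovered by first perturbing $x_3$ as above and using openness.
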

\proof
Let 
$x\in \R^4$ with  $x_3\neq \tilde p_3$ and $ x_4>\tilde p_4$.
Set 
$s:=x_3-\tilde p_3$, which is nonzero, and $t:=\dfrac{s^2}{x_4-\tilde p_4}$, which is positive.
By the particular  assumption on $\tilde p$, 
we have
$$(\tilde p_1, x_2 - t,  \tilde p_3, \tilde p_4)\in E.$$
By $X_1$-invariance, 
$$(s/t, x_2 - t,  \tilde p_3, \tilde p_4)\in E.$$
By $X_2$-monotonicity, 
$$\left( \dfrac{s}{t}, x_2-t+t, \tilde p_3+\dfrac{s}{t}t,
\tilde p_4+\dfrac{s^2}{t^2}t\right)
\in E.$$
Explicitly, $$
\left( \dfrac{s}{t}, x_2, \tilde{p}_3+s,
\tilde{p}_4+ \dfrac{s^2}{t}\right)=
\left( \dfrac{s}{t}, x_2, \tilde{p}_3+x_3-\tilde p_3,
\tilde p_4+ {s^2}\dfrac{x_4-\tilde{p}_4}{s^2}\right)=
\left( \dfrac{s}{t}, x_2,  x_3 ,
    {x_4 } \right)
\in E.$$
By $X_1$-invariance, 
$$\left( x_1, x_2,  x_3 ,
    {x_4 } \right)
\in E.$$
\qed

\begin{observation}
\label{observation:semispazio}
The previous lemma is saying that the fuction $G$ describing $E$ has the property that the closure of the level set at $-\infty$ is a half-space orthogonal to $x_4$. With an analogous argument we can actually prove the stronger statement: if $(p^n_1, p^n_3, p^n_4) \to (p_1, p_3, p_4)$  as $n \to \infty$ and $G(p^n_1, p^n_3, p^n_4) \to -\infty$ then on the half-space $\{x_4 > p_4\}$ the function $G$ must take the value $-\infty$. We skip the proof of this statement, since it will easily follow from the properties of the set $C$ described in Example \ref{example3}.
\end{observation}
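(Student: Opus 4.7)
The plan is to mimic the argument of Lemma \ref{lem:semispazio} but apply it to each of the approximating points $(p^n_3, p^n_4)$ (I read the subscript $1$ in the statement as a typo, since $G$ depends only on $x_3,x_4$ by $X_1$-invariance), and then transfer the conclusion to $(p_3,p_4)$ by a limiting argument that uses upper semi-continuity of $G$.

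\textbf{Step 1: the case $x_3\neq p_3$.} Fix a target $(x_1,x_2,x_3,x_4)\in\R^4$ with $x_4>p_4$ and $x_3\neq p_3$; I want to show this point lies in $E$. Set
\[
a_n:=\frac{2(x_4-p^n_4)}{x_3-p^n_3},\qquad t_n:=\frac{(x_3-p^n_3)^2}{2(x_4-p^n_4)},\qquad b_n:=x_2-t_n,
\]
which, for $n$ large, are well-defined and positive (resp. in $\R$), and satisfy $a_n\to\frac{2(x_4-p_4)}{x_3-p_3}$, $t_n\to\frac{(x_3-p_3)^2}{2(x_4-p_4)}>0$, $b_n\to x_2-\frac{(x_3-p_3)^2}{2(x_4-p_4)}$. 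In particular $(b_n)$ is bounded, so since $G(p^n_3,p^n_4)\to-\infty$, we have $b_n>G(p^n_3,p^n_4)$ for all $n$ sufficiently large. Thus $(0,b_n,p^n_3,p^n_4)\in E$, and by $X_1$-invariance $(a_n,b_n,p^n_3,p^n_4)\in E$. Applying $X_2$-monotonicity (property ii')) with the chosen $t_n>0$ and using formula \eqref{Flow X2}, we obtain
\[
(a_n,\,b_n+t_n,\,p^n_3+a_n t_n,\,p^n_4+a_n^2 t_n/2)=(a_n,\,x_2,\,x_3,\,x_4)\in E,
\]
exactly as in the proof of Lemma \ref{lem:semispazio}. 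A final use of $X_1$-invariance replaces $a_n$ by the desired $x_1$, giving $(x_1,x_2,x_3,x_4)\in E$. Since $x_1,x_2$ were arbitrary, $G(x_3,x_4)=-\infty$.

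\textbf{Step 2: the case $x_3=p_3$.} For any $x_4>p_4$, by Step 1 we already know $G(x_3,x_4)=-\infty$ whenever $x_3\neq p_3$. Applying this to a sequence $x_3^k\to p_3$ with $x_3^k\neq p_3$, the upper semi-continuity of $G$ (established in Lemma \ref{lemma:esisteG}) gives
\[
G(p_3,x_4)\leq\limsup_{k\to\infty}G(x_3^k,x_4)=-\infty,
\]
so $G\equiv-\infty$ on the whole half-space $\{x_4>p_4\}$, as required.

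\textbf{Main obstacle.} The only delicate point is controlling the starting coordinate $b_n$: we must guarantee that the backward shift by $t_n$ does not drive $b_n$ to $-\infty$ faster than $G(p^n_3,p^n_4)$ does. This is why the explicit computation of $t_n=\frac{(x_3-p^n_3)^2}{2(x_4-p^n_4)}$ is important — since $x_4>p_4$ and $x_3-p^n_3\to x_3-p_3$ is bounded, $t_n$ remains bounded, so $b_n$ does too, and the hypothesis $G(p^n_3,p^n_4)\to-\infty$ forces $b_n>G(p^n_3,p^n_4)$ eventually. Once this quantitative matching is in place, the remainder of the argument is a direct transcription of the proof of Lemma \ref{lem:semispazio}.
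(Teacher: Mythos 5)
Your Step 1 is correct and is essentially the argument the paper has in mind: the paper's hint to use the cone $C$ of Example \ref{example3} amounts to exactly the flow computation you perform, since $q+C\subseteq E$ for $q\in E$ is precisely the statement that the $X_1$-invariance/$X_2$-monotonicity flow sweeps out a translated paraboloidal cone; taking base points $q^n=(0,b_n,p^n_3,p^n_4)$ with $b_n\to-\infty$ and observing that any target with $x_4>p_4$ eventually falls into $q^n+C$ is the same computation. Your quantitative remark about $t_n$ staying bounded is the right point to emphasize.

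However, Step 2 contains a genuine error. You invoke upper semi-continuity to conclude $G(p_3,x_4)\leq\limsup_k G(x_3^k,x_4)$, but that inequality is \emph{lower} semi-continuity. Upper semi-continuity of $G$ at $(p_3,x_4)$ says $\limsup_{y\to(p_3,x_4)}G(y)\leq G(p_3,x_4)$, which here reads $-\infty\leq G(p_3,x_4)$ and is vacuous; an upper semi-continuous function can perfectly well jump \emph{up} at a limit of points where it is $-\infty$. The case $x_3=p_3$ therefore needs a different argument. Two fixes are available. (a) Apply your Step 1 (or directly Lemma \ref{lem:semispazio}) a second time: pick $q_3\neq p_3$ and $q_4\in(p_4,x_4)$; Step 1 gives $G(q_3,q_4)=-\infty$, so the whole $x_2$-line over $(q_3,q_4)$ lies in $E$, and Lemma \ref{lem:semispazio} applied at that base point yields $\{x_4>q_4\}\subseteq E$, which contains the target since $x_3=p_3\neq q_3$ is now allowed. (b) Follow the paper's suggested route with the cone $C$: the membership test $x\in q^n+C$ requires $x_2-b_n>(x_3-p^n_3)^2/\bigl(2(x_4-p^n_4)\bigr)$, whose right-hand side stays bounded as $n\to\infty$ even when $x_3=p_3$ (it tends to $0$ in that case), so no case distinction on $x_3$ is needed at all. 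With either repair the statement follows; as written, Step 2 does not.
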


%

\begin{definition}[Partially Lipschitz map]
Let $G:\R^k\to\overline\R$, $v\in\R^k$, and $L>0$. We say that $G$ is {\em partially $L$-Lipschitz along $v$} if, for all $t>0$ and $x\in\R^k$, one has
$$G(x+tv) \leq Lt+G(x).$$
\end{definition}
Notice that in the above definition we only have a condition for positive $t$ and also for the difference $G(x+tv)-G(x)$, not for the absolute value. Example of partially Lipschitz maps are the monotone maps. Indeed, every nonincreasing function $G:\R\to\R$ is partially $1$-Lipschitz along $v$, for all $L>0$ and all $v>0$.

\begin{lemma}
\label{partiallylip} Let  $G:\R^2\to\overline\R$ be such that
the set $E=\{x\in\R^4\;:\;x_2>G(x_3,x_4)\}$ is  $X_2$-monotone. Then $G$ is partially $1$-Lipschitz along any vector $(a,a^2/2)$, with $a\in\R$.
\end{lemma}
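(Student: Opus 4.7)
The plan is to read off the inequality directly from the $X_2$-monotonicity condition (property \textbf{ii')}) expressed in terms of the function $G$. Recall that the flow formula \eqref{Flow X2} tells us that for a point $p=(p_1,p_2,p_3,p_4)\in E$ and $t>0$, the point
\[
p_t = (p_1,\, p_2+t,\, p_3 + p_1 t,\, p_4 + p_1^2 t/2)
\]
again lies in $E$. Rewriting this in terms of the defining function $G$, the assumption $p\in E$ reads $p_2>G(p_3,p_4)$ and the conclusion $p_t\in E$ reads $p_2+t > G(p_3+p_1 t,\; p_4+p_1^2 t/2)$.

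Now fix $a\in\R$, $t>0$, and $(x_3,x_4)\in\R^2$. First I would handle the case $G(x_3,x_4)\in\R$: for every $\lambda > G(x_3,x_4)$, apply the implication above to the point $p=(a,\lambda,x_3,x_4)\in E$ (using $p_1=a$). This yields
\[
G\bigl(x_3+a t,\; x_4 + a^2 t/2\bigr) < \lambda + t.
\]
Letting $\lambda\downarrow G(x_3,x_4)$ gives
\[
G\bigl((x_3,x_4) + t(a,a^2/2)\bigr) \leq G(x_3,x_4) + t,
\]
which is precisely partial $1$-Lipschitzness along $v=(a,a^2/2)$.

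The remaining cases for the value $G(x_3,x_4)\in\{-\infty,+\infty\}$ are essentially cosmetic. If $G(x_3,x_4)=+\infty$, the inequality $G\bigl(x_3+at,x_4+a^2 t/2\bigr)\leq +\infty$ is trivially valid. If $G(x_3,x_4)=-\infty$, then $(a,\lambda,x_3,x_4)\in E$ for every $\lambda\in\R$, so the argument above forces $G(x_3+at,x_4+a^2 t/2)<\lambda+t$ for all $\lambda\in\R$, hence $G(x_3+at,x_4+a^2 t/2)=-\infty$ and the desired inequality again holds in $\overline\R$. No further obstacle is expected; the entire content of the lemma is the parametric translation of the $X_2$-flow into a one-sided bound on $G$.
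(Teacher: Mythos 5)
Your proof is correct and follows essentially the same route as the paper's: take a point $(a,\lambda,x_3,x_4)$ with $\lambda>G(x_3,x_4)$, apply the $X_2$-flow formula \eqref{Flow X2} together with $X_2$-monotonicity, and let $\lambda\downarrow G(x_3,x_4)$. Your explicit treatment of the $\pm\infty$ cases is slightly more careful than the paper's (which only dismisses $+\infty$), but the argument is the same.
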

\proof Fix $x_3,x_4\in\R$. Assume $G(x_3,x_4)\neq+\infty$, otherwise there is nothing to prove. 
Take $x_2>G(x_3,x_4)$. Thus $(a,x_2,x_3,x_4)\in E$. Since $E$ is $X_2$-monotone, we have that, for all $t>0$,
$$(  a,x_2+t,x_3+at,x_4+\dfrac{a^2}{2}t)\in E.$$
So $x_2+t>G(x_3+at,x_4+{a^2t}/{2})$,  for all $t>0$.
Letting $x_2\to G(x_3,x_4)$, we get
$$G(x_3,x_4)+t\geq G(x_3+at,x_4+{a^2t}/{2}),$$
which ends the proof. \qed

As a consequence we can get the following corollary, i.e. Theorem \ref{cor:37}:

\begin{cor}
\label{cor:lip}
There exist coordinates in which the set $E$ of constant normal $X_2$ can be expressed as upper-graph of a globally Lipschitz function of $\R^3$. 
\end{cor}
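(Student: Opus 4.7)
The plan is to apply a fixed-cone ``forbidden-direction'' criterion to a suitable representative of $E$, after exploiting the $X_1$-invariance to reduce the problem to a three-dimensional slice in which all the relevant left-invariant flows restrict to ordinary translations.

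First, I will invoke Lemma \ref{lem2} together with Observation \ref{osservazione} to produce four linearly independent monotone directions for $E$, namely $X_1$, $X_2$, $X_4$, and $X_2+2X_3+2X_4$, and to conclude that their whole nonnegative conic hull $\hat Y$ consists of monotone directions. Since $X_1\uno_E=0$, the set $E$ is invariant under translation in the coordinate $x_1$, and by Lemma \ref{lemma:esisteG} I may identify $E$ with $\R\times E_0$, where $E_0\subseteq\R^3$ is an open set in the coordinates $(x_2,x_3,x_4)$. The key observation is then that on the slice $\{x_1=0\}$ the vector fields $X_2$, $X_4$ and $X_2+2X_3+2X_4$ all have vanishing $\partial_1$-component and reduce respectively to the constant coefficient fields $\partial_2$, $\partial_4$ and $\partial_2+2\partial_3+2\partial_4$. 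Their flows within the slice are therefore ordinary translations in $\R^3$, and the cone
\[
\hat Y_0:=\{\beta\, e_2+\gamma\, e_4+\delta\,(e_2+2e_3+2e_4):\beta,\gamma,\delta\geq 0\}\subseteq\R^3
\]
is a genuinely \emph{linear} convex cone with nonempty interior. Applying Lemma \ref{lem5} and Observation \ref{osservazione} in the slice then yields, at every $p\in\partial E_0$, the inclusions $p+\hat Y_0^\circ\subseteq E_0$ and $p-\hat Y_0^\circ\subseteq\R^3\setminus\overline{E_0}$.

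With a \emph{fixed} open forbidden cone at every boundary point, I will apply the standard cone criterion used at the end of the proof of Proposition \ref{prop1} (cf.\ \cite[Theorem 2.61]{Ambook}) to conclude that $\partial E_0$ is globally a Lipschitz graph over any linear hyperplane $\Pi\subseteq\R^3$ strictly separating $\hat Y_0$ from $-\hat Y_0$, in any direction $v$ in the interior of $\hat Y_0$; crucially, the resulting Lipschitz constant depends only on $\hat Y_0$, $v$, and $\Pi$, and hence not on the particular set $E$. Taking $w_1:=e_1$, a basis $w_2,w_3$ of $\Pi$ (viewed inside $\{0\}\times\R^3\subset\R^4$), and $w_4:=(0,v)$, the set $E$ is then the upper-graph, in the direction $w_4$, of a globally Lipschitz function $h(a_1,a_2,a_3)=h_0(a_2 w_2+a_3 w_3)$ (independent of $a_1$), which also produces the data required by Theorem \ref{cor:37}. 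The main obstacle I anticipate is the passage from a \emph{local} to a \emph{global} Lipschitz graph: on the full Engel group the flows of $X_2$ and $X_3$ are parabolic rather than linear, so the forbidden cone at a point $p$ a priori depends on $p$; the reduction to the slice $\{x_1=0\}$ is precisely what freezes this cone to a fixed linear object and makes the Lipschitz bound uniform.
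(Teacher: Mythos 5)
Your proof is correct and follows essentially the same strategy as the paper: after quotienting out the $X_1$-invariance, the $X_2$-monotonicity produces a \emph{fixed} Euclidean open convex cone contained in $E$ (resp.\ its complement) at every boundary point, and the standard cone criterion over a transverse hyperplane then gives the entire Lipschitz graph with a constant depending only on the cone. The only difference is in how the fixed cone is produced --- the paper extracts a paraboloid-shaped region in the $(x_2,x_3,x_4)$-variables from the full parabolic family of partially Lipschitz directions of Lemma \ref{partiallylip}, while you take the simplicial cone spanned by the three translation directions $e_2$, $e_4$, $e_2+2e_3+2e_4$ obtained by restricting the monotone fields of Lemma \ref{lem2} to the slice $\{x_1=0\}$ --- and both choices are fixed, open, and convex, which is all the argument requires.
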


\begin{proof}
From the previous lemma, for any direction $v=(v_1, v_2)$ in $\R^2_{x_3, x_4}$ with $|v|=1$ and $v_2 >0$, the function $G$ is $\frac{(v_1)^2}{2 v_2}$ partially Lipschitz along $v$. So at every point $y$ on the graph of $G$ there is a cone-shaped domain $y + \{(x_1, x_2, x_3, x_4): x_4 \geq 0, \, x_2 > \frac{(x_3)^2}{2 x_4} \sqrt{x_3^2 + x_4^2}\}$ that is contained in the upper graph of $G$. 

Remark that the cone is independent of the point $y$ on the graph of $G$, it is just moved via (euclidean) translation. By suitably rotating coordinates, we can make therefore $E$ to be the upper-graph of a globally Lipschitz function: namely we have to choose a graphing direction that lies in the interior of the set $\{(x_1, x_2, x_3, x_4): x_4 \geq 0, \, x_2 > \frac{(x_3)^2}{2 x_4} \sqrt{x_3^2 + x_4^2}\}$.
\end{proof}

\subsection{The complete class of examples of sets with normal  $X_2$}
\label{classofex}
We present in this subsection a characterization (as well as some examples) of sets with normal $X_2$ in the model of the Engel group that we have used above. 

\medskip

\medskip

We recall a few facts on BV functions, with reference to \cite[pages 354-379]{Giaquinta-Modica-Soucek}.

Let $u$ be an $L^1_{loc}$ function on $\R^n$; the subgraph $SU$ of $u$, i.e., the set $\{(x,y) \in \R^n \times \R: y < u(x)\}$, is a set of locally finite (Euclidean) perimeter if and only if $u$ is $BV_{loc}$ (Thm. 1 page 371). 

Let $u:\R^n \to \R$ be $L^1_{loc}$. Define the approximate $\limsup$ and $\liminf$ at $x \in \R^n$ respectively as follows, where for any $t \in \R$ we use the notation $U_{t,u}:=\{x \in \R^n: u(x)>t\}$ and $L_{t,u}:=\{x \in \R^n: u(x)<t\}$:

$$u_+(x):= \sup\{t \in \R: \text{the $n$-dim. density of the set } L_{t,u} \text{ at } x \text{ is } 0\},$$

$$u_-(x):= \inf\{t \in \R: \text{the $n$-dim. density of the set } U_{t,u} \text{ at } x \text{ is } 0\}.$$

When $u_+(x)=u_-(x)$ we say that $x$ is a point of ``approximate continuity'' for $u$. The set $J$ of points where the strict inequality $u_+(x)>u_-(x)$ holds is the ``jump set'' of $u$. Then we have (see \cite[pages 355]{Giaquinta-Modica-Soucek}): the set $J$ is ${\Hc}^{n-1}$-measurable and countably ${\Hc}^{n-1}$-rectifiable.

The term ``jump set'' is justified by the result we are about to recall. Denote, for $x \in \R^n$ and $\nu_x \in S^{n-1}$, the half-space $\{y \in \R^n : \langle y-x, \nu \rangle >0\}$ by $E^+(x, \nu)$. Analogously denote the half space $\{y \in \R^n : \langle y-x, \nu \rangle <0\}$ by $E^-(x, \nu)$.

For ${\Hc}^{n-1}$-a.e. $x$ in $J$ there exists a (unique) $\nu_x \in S^{n-1}$ such that it holds:
$$ \text{aplim}_{y \to x, y \in E^+(x, \nu)} u(y) =u_+(x) \,\, \text{ and }  \,\, \text{aplim}_{y \to x, y \in E^-(x, \nu)} u(y) =u_-(x).$$
The notion of approximate limit here (see \cite[pages 210]{Giaquinta-Modica-Soucek}) is meant as follows: 

for all $\eps>0$ the set $\{y \in E^+(x, \nu) : |u(y)-u_+(x)|\geq \eps\}$ has $n$-dim. density $0$ at the point $x$. Analogously for all $\eps>0$ the set $\{y \in E^-(x, \nu) : |u(y)-u_-(x)|\geq \eps\}$ has $n$-dim. density $0$ at the point $x$.




Then we can improve our knowledge of $J$ with the following statement (\cite[pages 355]{Giaquinta-Modica-Soucek}): the set $J$ is ${\Hc}^{n-1}$-measurable and countably ${\Hc}^{n-1}$-rectifiable; moreover on $J$ we have that the approximate tangent space (in the sense of geometric measure theory) exists for ${\Hc}^{n-1}$-a.e. $x$ and is given by the orthogonal to $\nu_x$.

\medskip

Regarding the distributional derivative $Du$ of the $BV_{loc}$ function $u:\R^n \to \R$, we know that it is a locally finite measure (by definition). Setting $D^{(j)}u:= Du \res J$ and $\tilde D u:=Du -D^{(j)}u$ we are going to use the splitting $Du= \tilde D u + D^{(j)}u$. The measures $D^{(j)}u$ and $\tilde D u$ are mutually singular. There exists (see \cite{Ambook} or \cite{Giaquinta-Modica-Soucek}) a further splitting of $\tilde D u$ into an absolutely continuous (w.r.t. Lebesgue measure) part and a ``Cantor part'', but we are not going to need it for our purposes. The measure $D^{(j)}u$ is just $(u_+(x) - u_-(x)) ({\Hc}^1 \res J) \otimes \nu_x$.

\medskip

By recalling Theorems 2 and 3 on page 375 of \cite{Giaquinta-Modica-Soucek} we will now see how to express the distributional derivative $D\,\uno_{SU}$ of the characteristic function $\uno_{SU}$, for $u \in BV_{loc}$, in terms of $Du$.

We split $$D\,\uno_{SU} = D^{(j)}\,\uno_{SU} + D^{(cont)}\,\uno_{SU},$$
where $D^{(j)}\,\uno_{SU}:=D\,\uno_{SU} \res (J \times \R)$ and $D^{(cont)}\,\uno_{SU}:=D\,\uno_{SU} - D^{(j)}\,\uno_{SU} $.

Let $(x,y)$ denote the coordinates for $\R^n \times \R$. Then it holds, for $D^{(cont)}\,\uno_{SU}$:

$$\left(D_i^{(cont)}\,\uno_{SU}\right) (\phi(x,y)) = \int_{\R^n \setminus J} \phi(x,u_+(x)) D_i u \,\,\, \text{for any } \phi \in C^\infty_c(\R^n \times \R) \text{ and } i\in\{1, 2, ... n\},$$

\begin{equation}
\label{eq:cont}
 \left(D_{n+1}^{(cont)}\,\uno_{SU}\right) (\phi(x,y)) = -\int_{\R^n} \phi(x,u_+(x)) dx \,\,\, \text{for any } \phi \in C^\infty_c(\R^n \times \R).
\end{equation}

Regarding the jump part we have that in $\R^n \times \R$
\begin{equation}
 \label{eq:jump}
D^{(j)}\,\uno_{SU} = \left({\Hc}^n \res V\right) \otimes (\nu_x, 0),
\end{equation}
where $V=\{(x,y) \in \R^n \times \R: x \in J, u_-(x) < y < u_+(x)\}$ and the vector $\nu_x$ is the normal to $J$ in $\R^n$.

\medskip

\medskip

We are now ready to prove:

\begin{prop}
\label{prop:characterize}
In our model of the Engel group a set has finite perimeter and constant horizontal normal $X_2$ if and only if it is of the form  $\{x_2 > G(x_3, x_4)\}$ for an upper semi-continuous function $G:\R^2 \to \overline{\R}$ with the following properties:
\begin{description}
 \item[(i)] the closure of $\{(x_3, x_4) : G(x_3, x_4) = -\infty\}$ is a half-space of the form $\{(x_3, x_4) : x_4 \geq b\}$ for some $b \in \overline{\R}$;

 \item[(ii)] the restriction of $G$ to the open set $\mathcal{G}:= \R^2 \setminus \overline{\{(x_3, x_4) : x_4 \geq b\}} \setminus \{(x_3, x_4) : G(x_3, x_4) = +\infty\}$ is $\text{BV}_{loc}(\mathcal{G})$;

 \item[(iii)] $G$ satisfies the following partial differential inequality\footnote{This is the distributional analogue of the inequality $( \partial_3G)^2+2\partial_4 G\leq 0$ in the case that $G$ is a smooth function. 
Indeed, assuming $( \partial_3G)^2+2\partial_4 G\leq 0$, for any $h\in C^\infty_c(\mathcal{G})$ such that $h\geq 0$ and $\int h =1$, we have $$\int ( \partial_3G)^2 h+2 \int \partial_4 G \,  h \leq 0.$$ Jensen's inequality applied with respect to the measure of unit mass $h\, d{\Lc}^2$ yields $$\left(\int \partial_3G \, h\right)^2 \leq \int ( \partial_3G)^2 h.$$
On the other hand, by assuming (\ref{PDI}) and using it on a sequence of test functions $h_n$ having unit integral and converging to the Dirac delta at a point, we pointwise obtain the inquality $( \partial_3G)^2+2\partial_4 G\leq 0$.
 } on $\mathcal{G}$:
for all $h\in C^\infty_c(\mathcal{G})$ such that $h\geq 0$,  it holds
\begin{equation}\label{PDI}
(\langle \partial_3G, h\rangle)^2+2\langle\partial_4 G, h\rangle \langle {\Lc}^2, h \rangle \leq 0 .
\end{equation}
Here  ${\Lc}^2$ denotes the Lebesgue measure on $\R^2$ and  $\langle \,,\,\rangle$ denotes the pairing of distributions and smooth test functions.

\end{description}
\end{prop}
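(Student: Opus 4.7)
The plan is to set up a dictionary between the distributional condition $X_2\uno_E\geq 0$ and the PDE inequality (\ref{PDI}), by exploiting the fact that $X_2 = \partial_2 + x_1\partial_3 + \tfrac{x_1^2}{2}\partial_4$ is a polynomial of degree two in the $x_1$-variable, while the set $E$ is independent of $x_1$.

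For the forward direction, I first invoke Lemma \ref{lem5} and Observation \ref{osservazione} to replace $E$ by an open $X_1$-invariant and $X_2$-monotone representative, then Lemma \ref{lemma:esisteG} to write this representative as $\{x_2>G(x_3,x_4)\}$ for an upper semi-continuous $G\colon\R^2\to\overline\R$; Lemma \ref{lem:semispazio} together with Observation \ref{observation:semispazio} yields property (i). For (ii), since $E=\R_{x_1}\times F$ with $F=\{x_2>G(x_3,x_4)\}\subset\R^3$, local finiteness of the horizontal perimeter of $E$ forces local finiteness of the Euclidean perimeter of $F$ on $\R_{x_2}\times\mathcal{G}$, and the subgraph characterization of BV functions (Theorem 1, page 371 of \cite{Giaquinta-Modica-Soucek}) gives $G\in BV_{loc}(\mathcal G)$.

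For (iii), the heart of the argument is a separation of variables. Testing $X_2\uno_E\geq 0$ against products $\phi(x_1,x_2,x_3,x_4)=\eta(x_1)\psi(x_2,x_3,x_4)$ with $\eta,\psi\geq 0$ produces the scalar inequality $A\,\mu_2(\psi)+B\,\mu_3(\psi)+C\,\mu_4(\psi)\geq 0$, where $A=\int\eta$, $B=\int x_1\eta$, $C=\int\tfrac{x_1^2}{2}\eta$ and $\mu_i:=\partial_{x_i}\uno_F$. As $\eta\geq 0$ varies in $C_c^\infty(\R)$, Cauchy-Schwarz combined with Dirac-concentration arguments shows that the triple $(A,B,C)$ sweeps out exactly the closed convex cone $\{A,C\geq 0,\ B^2\leq 2AC\}$, which is self-dual; hence the inequality above for all admissible $(A,B,C,\psi)$ is equivalent to $\mu_2(\psi)\geq 0$, $\mu_4(\psi)\geq 0$, and $\mu_3(\psi)^2\leq 2\mu_2(\psi)\mu_4(\psi)$ for every $\psi\geq 0$. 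Specializing next to $\psi(x_2,x_3,x_4)=\rho_n(x_2)\,h(x_3,x_4)$ with $h\in C_c^\infty(\mathcal G)$, $h\geq 0$, and $\rho_n$ a smooth cutoff increasing to $1$ along $\R_{x_2}$, I use $\uno_F=1-\uno_{SG}$ (where $SG=\{x_2<G(x_3,x_4)\}$) together with the formulas (\ref{eq:cont})--(\ref{eq:jump}), applied with height $x_2$ and base $\R^2_{(x_3,x_4)}$ and including both the continuous and jump parts of $DG$, to obtain in the limit
$$\mu_2(\psi)\ \to\ \langle\Lc^2,h\rangle,\qquad \mu_3(\psi)\ \to\ -\langle\partial_3 G,h\rangle,\qquad \mu_4(\psi)\ \to\ -\langle\partial_4 G,h\rangle.$$
Substitution then yields exactly (\ref{PDI}). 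The reverse direction runs this chain backwards: $X_1\uno_E=0$ is automatic by $x_1$-independence, and (\ref{PDI}) together with the cone analysis recovers $X_2\uno_E\geq 0$; local finiteness of perimeter follows from (ii) and the same subgraph correspondence.

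The principal obstacle will be the identification of $\mu_3(\psi)$ and $\mu_4(\psi)$ with $-\langle\partial_i G,h\rangle$ in the presence of the jump set $J$ of $G$: the formula (\ref{eq:jump}) deposits the jump contribution as a sheet $(\Hc^1\res J)\otimes dy$ of ``height'' $G_+-G_-$ on the wall $V=\{(x_3,x_4,y):(x_3,x_4)\in J,\ G_-<y<G_+\}$, and one must verify that integrating $\rho_n(y)h(x_3,x_4)$ against this sheet reproduces precisely $(G_+-G_-)h$ times the appropriate normal, matching the jump mass in $\langle\partial_i G,h\rangle$. Equally delicate are the facts that $h$ must be supported strictly inside the open set $\mathcal G$ (to avoid $\{G=+\infty\}$) and that $G$ may fail to be bounded on $\mathrm{supp}(h)$, which forces the cutoff $\rho_n$ to be handled through an exhaustion argument using that $G$ is $\Lc^2$-a.e.\ finite on $\mathcal G$.
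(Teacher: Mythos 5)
Your proposal is correct and follows essentially the same route as the paper: the separation of the $x_1$-variable reducing $X_2\uno_E\geq 0$ to the nonnegativity of a quadratic polynomial in $x_1$ (your self-dual cone $\{A,C\geq 0,\ B^2\leq 2AC\}$ is exactly the paper's ``discriminant $\leq 0$'' criterion), combined with the subgraph formulas (\ref{eq:cont})--(\ref{eq:jump}) split into continuous and jump parts. The step you flag as the principal obstacle---disentangling the jump contribution from the rest of $DG$ so that (\ref{PDI}) can be run backwards---is precisely what the paper isolates as Lemma \ref{lem:split} via the bump functions $\psi_{N,\eps}$ and the mutual singularity of the two parts, while your more direct derivation of (ii) from local finiteness of the horizontal perimeter (rather than via the Lipschitz-domain result of Corollary \ref{cor:lip}) is a harmless variation.
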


\begin{observation}
The inequality (\ref{PDI}) can be equivalently expressed by requiring that, for any $h\in C^\infty_c(\mathcal{G})$ such that $h\geq 0$ and $\int h =1$, it holds

\begin{equation}\label{PDI2}
\left(\int_{\mathcal{G}} G \frac{\partial h}{\partial x_3} \right)^2 \leq 2\int_{\mathcal{G}} G \frac{\partial h}{\partial x_4} .
\end{equation}
\end{observation}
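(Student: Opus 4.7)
The plan is to recognize that the inequality (\ref{PDI}) is positively 2-homogeneous in the test function $h$, so up to rescaling it is equivalent to its normalized form (\ref{PDI2}). Concretely, for $G \in \text{BV}_{loc}(\mathcal{G})$ and $h\in C^\infty_c(\mathcal{G})$, the distributional pairings unpack via integration by parts as
$$\langle \partial_3G, h\rangle = -\int_{\mathcal{G}} G\, \partial_3 h,\qquad \langle \partial_4G, h\rangle = -\int_{\mathcal{G}} G\, \partial_4 h,\qquad \langle {\Lc}^2, h\rangle = \int_{\mathcal{G}} h.$$
Substituting these identities, (\ref{PDI}) becomes exactly
$$\left(\int_{\mathcal{G}} G\, \partial_3 h\right)^2 \leq 2\left(\int_{\mathcal{G}} G\, \partial_4 h\right)\left(\int_{\mathcal{G}} h\right),$$
which I will refer to as ($\star$). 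It suffices to prove that ($\star$) for all non-negative $h \in C^\infty_c(\mathcal{G})$ is equivalent to ($\star$) restricted to those $h \geq 0$ with $\int h = 1$, since the latter is precisely (\ref{PDI2}).

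The forward direction is immediate by specialization: if ($\star$) holds for all non-negative test functions, then in particular it holds for those with unit integral, and (\ref{PDI2}) drops out since the factor $\int h$ becomes $1$. For the converse direction, I would argue by scaling. Fix an arbitrary $h \in C^\infty_c(\mathcal{G})$ with $h \geq 0$ and set $c := \int_{\mathcal{G}} h$. If $c = 0$ then $h \equiv 0$ (because $h \geq 0$ is continuous) and both sides of ($\star$) vanish trivially. If $c > 0$, set $\tilde h := h/c$, which satisfies $\tilde h \geq 0$ and $\int \tilde h = 1$, so (\ref{PDI2}) applied to $\tilde h$ reads
$$\frac{1}{c^2}\left(\int_{\mathcal{G}} G\, \partial_3 h\right)^2 \leq \frac{2}{c}\int_{\mathcal{G}} G\, \partial_4 h,$$
and multiplying through by $c^2 = (\int h)^2$ (and using $c = \int h$ on the right) recovers ($\star$) for the original $h$.

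There is no substantive obstacle here: the entire content is the homogeneity observation together with the integration-by-parts identification of the two inequalities. The only minor points to state carefully are that $h \geq 0$ plus $\int h = 0$ forces $h \equiv 0$ (so the degenerate case is harmless), and that the compact support of $h$ inside $\mathcal{G}$ together with $G \in \text{BV}_{loc}(\mathcal{G})$ ensures all distributional pairings above are finite and the integration-by-parts formulas are valid.
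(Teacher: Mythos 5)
Your proof is correct and is exactly the routine verification the paper leaves implicit (the Observation is stated without proof): integration by parts identifies the distributional pairings with $-\int G\,\partial_i h$, and the positive $2$-homogeneity of both sides of the resulting inequality in $h$ reduces the general case to test functions of unit mass. The degenerate case $\int h=0$ and the sign bookkeeping (which turns $+2\langle\partial_4G,h\rangle\langle\Lc^2,h\rangle\le 0$ into the stated $\le$ with $2\int G\,\partial_4 h$ on the right) are both handled correctly.
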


\medskip

We first show the following

\begin{lemma}
\label{lem:split}
Let  $G: \mathcal{G} \subset \R^2 \to\R$ be as in Proposition \ref{prop:characterize} and be $J \subset \R^2$ its jump part. We take $G$ to be a function of the variables $x_3$ and $x_4$ and we will denote by $\partial_3$ (resp. $\partial_4$) the partial derivative, which is a Radon measure, with respect to the variable $x_3$ (resp. $x_4$). 

The partial differential inequality (\ref{PDI}) splits into (and actually is equivalent to)

\begin{equation}
\label{splittedeq}
(\langle \tilde{\partial}_3G, h\rangle)^2+2\langle\tilde{\partial}_4 G, h\rangle \langle {\Lc}^2, h \rangle \leq 0\,\, , \,\,\,\,\,\, \partial^{(j)}_3G= 0,
\end{equation} 
where we are using the notation $\tilde{\partial}$ and $\partial^{(j)}$ introduced before and $h$ is any non-negative test function. The second in (\ref{splittedeq}) is equivalent to saying that $J$ has normal $\nu_x$ that is parallel to the $x_4$-direction for ${\Hc}^1$-a.e. $x \in J$.
\end{lemma}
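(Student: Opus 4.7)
The plan is to use the decomposition $\partial_i G = \tilde\partial_i G + \partial^{(j)}_i G$ ($i\in\{3,4\}$) of the $BV$ derivative together with the background fact that $G$ is non-increasing in $x_4$. This last fact follows from the $X_2$-monotonicity of the upper-graph: applying Lemma~\ref{partiallylip} along the direction $(a,a^2/2)$ and sending $|a|\to\infty$ while keeping $s:=a^2 t/2$ fixed yields $G(x_3,x_4+s)\leq G(x_3,x_4)$. Hence $\partial_4 G\leq 0$ as a signed measure, and by the mutual singularity of $\tilde\partial_4 G$ and $\partial^{(j)}_4 G$ we get $\tilde\partial_4 G\leq 0$ and $\partial^{(j)}_4 G\leq 0$ separately.

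For the forward implication $(\ref{PDI})\Rightarrow(\ref{splittedeq})$, I would first obtain the splitted inequality by applying $(\ref{PDI})$ to non-negative $h\in C^\infty_c(\mathcal{G})$ whose support is disjoint from $J$: for such $h$ one has $\langle\partial^{(j)}_i G,h\rangle=0$ and $(\ref{PDI})$ reads exactly as the splitted inequality. For a general non-negative $h$, multiply by a smooth cutoff $\phi_\eps$ that vanishes on the $\eps$-neighborhood of $J$ and equals $1$ outside its $2\eps$-neighborhood, and let $\eps\to 0$; dominated convergence applies since $\Lc^2(J)=0$ and $|\tilde\partial_i G|(J)=0$. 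To prove $\partial^{(j)}_3 G=0$, I would argue by contradiction via a blow-up at a point $x_0\in J$ at which $(\nu_{x_0})_1\neq 0$, that is simultaneously an $\Hc^1$-density point of the relevant subset of $J$ and a point of standard $BV$ blow-up (both conditions are met at $\Hc^1$-a.e.\ such point). Take $h_r(x):=r^{-2}\varphi((x-x_0)/r)$ with a fixed non-negative $\varphi\in C^\infty_c(\R^2)$; a change of variables gives $\langle\partial_i G,h_r\rangle = r^{-1}\langle\partial_i G_r,\varphi\rangle$ where $G_r(y):=G(x_0+ry)$, and the $BV$ blow-up theorem says that $G_r$ converges in $L^1_{\rm loc}$ to $G_+\uno_{\{y\cdot\nu_{x_0}>0\}}+G_-\uno_{\{y\cdot\nu_{x_0}<0\}}$, so distributional convergence yields $\langle\partial_i G,h_r\rangle=r^{-1}(c_i+o(1))$ with $c_i=(G_+(x_0)-G_-(x_0))(\nu_{x_0})_i\int_{\{y\cdot\nu_{x_0}=0\}}\varphi\, d\Hc^1$. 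Plugging into $(\ref{PDI})$, multiplying by $r^2$ and letting $r\to 0$ gives $c_3^2\leq 0$; choosing $\varphi$ so that $\int_{\{y\cdot\nu_{x_0}=0\}}\varphi\, d\Hc^1>0$ forces $(\nu_{x_0})_1=0$, a contradiction. This concentration step, resting on the standard $BV$ blow-up theorem at jump points, is the main technical obstacle of the argument.

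For the reverse implication, I would simply expand
\begin{equation*}
(\langle\partial_3 G,h\rangle)^2+2\langle\partial_4 G,h\rangle\langle\Lc^2,h\rangle = \bigl((\langle\tilde\partial_3 G,h\rangle)^2+2\langle\tilde\partial_4 G,h\rangle\langle\Lc^2,h\rangle\bigr)+2\langle\partial^{(j)}_4 G,h\rangle\langle\Lc^2,h\rangle,
\end{equation*}
using $\partial_3 G=\tilde\partial_3 G$ since $\partial^{(j)}_3 G=0$; the first bracket is $\leq 0$ by the splitted inequality while the last summand is $\leq 0$ because $\partial^{(j)}_4 G\leq 0$ and $h\geq 0$. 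Finally, the equivalence $\partial^{(j)}_3 G=0\iff\nu_x\parallel e_4$ for $\Hc^1$-a.e.\ $x\in J$ is immediate from the formula $\partial^{(j)}_3 G=(G_+-G_-)(\nu_x)_1\,\Hc^1\res J$ together with $G_+>G_-$ on $J$ by definition.
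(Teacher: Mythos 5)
Your blow-up argument for $\partial_3^{(j)}G=0$ is correct and genuinely different from the paper's: the paper instead concentrates the test function on compact pieces $\cup_{i=1}^N f_i(K_i)$ of $J$ via bump functions $\psi_{N,\eps}$ supported in open neighborhoods of small Lebesgue measure, and reads off $(\langle\nu_{3,N},h\rangle)^2\le 0$ in the limit $\eps\to0$, $N\to\infty$. Your rescaling $h_r$ combined with the standard $BV$ blow-up at approximate jump points reaches the same conclusion more locally; the identity $\langle\partial_i G,h_r\rangle=r^{-1}(c_i+o(1))$ and the multiplication by $r^2$ are sound, and this part of the proposal is a legitimate (arguably cleaner) alternative.

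The gap is in your treatment of the diffuse part, i.e.\ the first inequality of \eqref{splittedeq}. Your cutoff $\phi_\eps$ must vanish on the $\eps$-neighborhood of $J$ and equal $1$ outside the $2\eps$-neighborhood, and you pass to the limit invoking $\Lc^2(J)=|\tilde\partial_i G|(J)=0$. But $J$ is only countably $\Hc^1$-rectifiable, not closed: it can be dense in $\mathcal{G}$ (take $G(x_3,x_4)=g(x_4)$ with $g$ non-increasing, upper semicontinuous and jumping at every rational, which is a legitimate example of the type of Example \ref{example}). Then every $\eps$-neighborhood of $J$ is all of $\mathcal{G}$, so $\phi_\eps\equiv0$ and the limit yields no information; dominated convergence would require $\phi_\eps\to1$ a.e., which only holds off $\overline{J}$. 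This is exactly what the paper's exhaustion by the compacts $\cup_{i=1}^N f_i(K_i)$ (of finite $\Hc^1$-measure, hence admitting open neighborhoods $A_{N,\eps}$ with $\Lc^2(A_{N,\eps})\le\eps$) is designed to handle: one tests \eqref{PDI} against $h(1-\psi_{N,\eps})$ and lets $\eps\to0$, then $N\to\infty$. Two smaller points: your derivation of $\partial_4G\le0$ by sending $|a|\to\infty$ in Lemma \ref{partiallylip} does not close, because upper semicontinuity controls the $\limsup$ from the wrong side (the fact itself follows at once from \eqref{PDI}, since $(\langle\partial_3G,h\rangle)^2\ge0$ and $\langle\Lc^2,h\rangle>0$); and your converse uses $\partial_4^{(j)}G\le0$, which is not a consequence of \eqref{splittedeq} alone and must be supplied from the monotonicity of $G$ in $x_4$ — though the paper likewise only asserts the converse in an unproved observation.
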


\begin{proof}[Proof of Lemma \ref{lem:split}]

 We shall prove that (\ref{PDI}) yields the two inequalities in (\ref{splittedeq}).

By definition of ${\Hc}^{n-1}$-rectifiable we have $J=\cup_{i=1}^\infty f_i(K_i)$, where the $K_i$'s are compact sets in $\R$ and $f_i$'s are Lipschitz functions from $\R$ to $\mathcal{G}$. We can assume the union $\cup_{i=1}^\infty f_i(K_i)$ to be disjoint. Fix any $\eps >0$: for each $N$ we can find an open neighbourhood $A_{N,\eps}$ of the compact set $\cup_{i=1}^N f_i(K_i)$ such that ${\Lc}^2(A_{N,\eps}) \leq \eps$. This is achieved by taking neighbourhoods of each $f_i(K_i)$ having measure at most $\frac{\eps}{2^i}$ and taking their union from $i=1$ to $i=N$. The fact that we can find an open neighbourhood of $f_i(K_i)$ having arbitrarily small area is a consequence of the fact that $f_i(K_i)$ has finite ${\Hc}^1$-measure. 

Choose now, for $N$ and $\eps$ fixed, a smooth bump function $\psi_{N,\eps}$ that is identically $1$ on the compact set $\cup_{i=1}^\infty f_i(K_i)$, identically $0$ outside of $A_{N,\eps}$ and takes values between $0$ and $1$. 

\medskip

Choose any $h \in C^\infty_c(\mathcal{G})$. The partial differential inequality (\ref{PDI}) used on the function $h_{N,\eps}:=h \psi_{N,\eps}$ reads

\begin{equation}
\label{reads}
 (\langle \tilde{\partial}_3 G, h_{N,\eps}\rangle+ \langle  \partial^{(j)}_3 G, h_{N,\eps}\rangle)^2+2\langle \tilde{\partial}_4 G, h_{N,\eps}\rangle \langle {\Lc}^2, h_{N,\eps} \rangle + 2\langle  \partial^{(j)}_4 G, h_{N,\eps}\rangle \langle {\Lc}^2, h_{N,\eps} \rangle\leq 0 .
\end{equation}
Keeping $N$ fixed and letting $\eps \to 0$ we get that 
$$ \tilde{\partial}_3 G \left(A_{N,\eps}\right) \to \tilde{\partial}_3 G \left(\cup_{i=1}^N f_i(K_i)\right) =0, $$
where the convergence holds since $\cup_{i=1}^N f_i(K_i)= \cap_{\eps>0}A_{N,\eps}$, $\tilde{\partial}_3 G$ is a Radon measure and the sets $A_{N,\eps}$ are bounded.
This, together with an analogous convergence for $\tilde{\partial}_4 G$ and ${\Lc}^2$, gives that for $N$ fixed and $\eps \to 0$:
$$\langle \tilde{\partial}_3 G, h_{N,\eps}\rangle \to 0 \, , \,\,\, \langle \tilde{\partial}_4 G, h_{N,\eps}\rangle \to 0, \,\,\,\, \langle {\Lc}^2, h_{N,\eps} \rangle \to 0.$$
Let us now look at the remaining terms in (\ref{reads}), namely those involving the ``jump parts''. Denote by $\nu_{3,N}$ the measure $ \partial_3^{(j)}G \res \left(\cup_{i=1}^N f_i(K_i)\right)$. In the same fashion let $\nu_{4,N} := \partial_4^{(j)}u \res \left(\cup_{i=1}^N f_i(K_i)\right)$. Remark that $\nu_{3,N} \rightharpoonup \partial_3^{(j)}G$ and $\nu_{4,N} \rightharpoonup \partial_4^{(j)}G$ as $N \to \infty$.

For a fixed $N$ we get moreover (recall that $\psi_{N,\eps}=1$ on $\cup_{i=1}^N f_i(K_i)$) that, as $\eps \to 0$:

 $$\langle  \partial^{(j)}_3 G, h_{N,\eps}\rangle \to \langle  \nu_{3,N}, h\rangle \,, \,\,\, \langle  \partial^{(j)}_4 G, h_{N,\eps}\rangle \to \langle  \nu_{4,N}, h\rangle.$$
So we can send (\ref{reads}) to the limit for $\eps \to 0$ and get
\begin{equation}
\label{reads2}
 (\langle  \nu_{3,N}, h\rangle)^2 \leq 0 ,
\end{equation}
which holds for every $h \geq 0$. Using the convergence of measures $\nu_{3,N} \rightharpoonup \partial_3^{(j)}G$ as $N \to \infty$ we obtain that $\partial_3^{(j)}G=0$, as in (\ref{splittedeq}). This equivalently means that $J$ has a normal $\nu$ always parallel to the $x_4$ direction.

\medskip

In order to get the first inequality in (\ref{splittedeq}) we can use an analogous argument, this time using $1-\psi_{N,\eps}$ instead of $\psi_{N,\eps}$.
\end{proof}

\begin{observation}
 the condition on the shape of $J$ is actually equivalent to ${\Hc}^1\left(J \setminus \cup_{i=1}^\infty B_i\right)=0$, where each $B_i$ is a Borel subset of a line parallel to $x_3$.
\end{observation}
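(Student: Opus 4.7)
The plan is to establish the equivalence in two steps, after first noting that the geometric condition "$\nu_x$ parallel to the $x_4$-direction at $\Hc^1$-a.e. $x\in J$" is the same as "the approximate tangent of $J$ at $x$ is parallel to the $x_3$-direction at $\Hc^1$-a.e. $x\in J$", because the $\Hc^1$-rectifiability of $J$ (itself granted by Federer--Vol'pert for jump sets of BV functions) guarantees the existence of approximate tangents $\Hc^1$-a.e., and these are orthogonal to the measure-theoretic normal.

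For the easier direction ($\Leftarrow$), I would start from a decomposition $\Hc^1(J\setminus\bigcup_{i=1}^\infty B_i)=0$ with $B_i\subset L_{c_i}=\{x_4=c_i\}$ and argue pointwise that horizontal tangent holds at $\Hc^1$-a.e. $x\in B_i$. The key remark is that the lines $L_{c_j}$ for $j\ne i$ are separated from $x$ by a positive Euclidean distance $|c_i-c_j|$, so in sufficiently small balls around $x$ only $B_i$ contributes to $J$. Hence the approximate tangent of $J$ at $x$ coincides with that of $B_i$ viewed as a subset of $L_{c_i}$, which is horizontal at $\Hc^1$-a.e.\ density point of $B_i$.

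The substance lies in the direction ($\Rightarrow$). My plan is to exploit the $1$-rectifiability of $J$ to decompose it, up to an $\Hc^1$-null set $N$, as a countable union $J=N\cup\bigcup_{i=1}^\infty \Gamma_i$ in which each $\Gamma_i$ is, after perhaps renaming the axes, the graph of a Lipschitz function of one variable on a bounded interval $I_i$; this is a standard consequence of the structure theory of $1$-rectifiable sets in $\R^2$ (see e.g.\ \cite{Ambook,Giaquinta-Modica-Soucek}). By further splitting, I may assume each $\Gamma_i$ is either of type (a) $\{(x_3,\phi_i(x_3)):x_3\in I_i\}$ or of type (b) $\{(\psi_i(x_4),x_4):x_4\in I_i\}$. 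In type (a) the approximate tangent at a differentiability point is the direction $(1,\phi_i'(x_3))$, and the a.e.\ horizontality of the tangent forces $\phi_i'=0$ $\Lc^1$-a.e.\ on $I_i$. Since $\phi_i$ is Lipschitz and therefore absolutely continuous, the identity $\phi_i(b)-\phi_i(a)=\int_a^b\phi_i'(t)\,dt=0$ yields that $\phi_i$ is constant, so $\Gamma_i$ is a Borel subset of the single horizontal line $\{x_4=\phi_i\}$. In type (b) the tangent direction $(\psi_i'(x_4),1)$ has nonvanishing $x_4$-component and is never horizontal, forcing $\Hc^1(\Gamma_i)=0$ under our assumption. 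Taking $B_i:=\Gamma_i$ for each $\Gamma_i$ of type (a) produces the desired countable covering.

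The main obstacle is the clean deployment of the decomposition of a $1$-rectifiable set in $\R^2$ into countably many Lipschitz (in fact $C^1$) graphs of functions of a single coordinate; once this structure is invoked, the remaining argument reduces to the elementary but decisive use of the absolute continuity of Lipschitz functions, which converts the pointwise tangent condition into the statement that each graphing function is constant.
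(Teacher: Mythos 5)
The paper states this Observation without proof, so there is nothing to compare against on that side; your two\--step plan (translate the condition $\partial_3^{(j)}G=0$ into ``approximate tangent of $J$ horizontal $\Hc^1$-a.e.'', then go back and forth between that and a countable covering by subsets of horizontal lines) is the natural one, and the forward direction is essentially right: decomposing $J$ up to an $\Hc^1$-null set into countably many Lipschitz graphs over one of the two coordinate axes, killing the graphs over the $x_4$-axis because their tangents are never horizontal, and using absolute continuity of the remaining graphing functions $\phi_i$ to convert $\phi_i'=0$ a.e.\ into $\phi_i\equiv\mathrm{const}$, is a complete and correct argument (the refinement of the standard rectifiable decomposition to graphs over the two \emph{coordinate} directions deserves one sentence, but it is routine).

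There is, however, a genuine flaw in your justification of the direction ($\Leftarrow$). You claim that since each line $L_{c_j}$, $j\neq i$, is at positive distance $|c_i-c_j|$ from a point $x\in B_i$, ``in sufficiently small balls around $x$ only $B_i$ contributes to $J$''. This inference fails: there are infinitely many $j$, and $\inf_{j\neq i}|c_i-c_j|$ can be zero (take $c_j$ enumerating a set accumulating at $c_i$, e.g.\ $c_j=1/j$ and $c_i=0$), in which case every ball around $x$ meets infinitely many of the $B_j$. The correct — and standard — substitute is the locality of approximate tangent spaces: since $J$ is $\sigma$-finite with respect to $\Hc^1$, for $\Hc^1$-a.e.\ $x\in B_i$ the upper $1$-dimensional density of $J\setminus B_i$ at $x$ vanishes, and therefore the approximate tangent of $J$ at $x$ coincides with that of $B_i$, which is horizontal at every density point of $B_i$ in its line. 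Note that this is exactly the same locality fact you tacitly need in the forward direction to identify the approximate tangent (equivalently the normal $\nu_x$) of $J$ with the classical tangent $(1,\phi_i'(x_3))$ of the individual graph $\Gamma_i$; it should be invoked explicitly once and used in both directions. With that replacement the argument closes.
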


\begin{observation}
 It is easily seen that, for $G \in BV_{loc}$, equations (\ref{splittedeq}) are actually equivalent to (\ref{PDI}).
\end{observation}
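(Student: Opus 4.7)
The forward implication (\ref{PDI}) $\Rightarrow$ (\ref{splittedeq}) is already established in Lemma \ref{lem:split}, so the observation reduces to the converse direction: (\ref{splittedeq}) $\Rightarrow$ (\ref{PDI}). The plan is a direct algebraic manipulation using the BV decomposition $\partial_i G = \tilde{\partial}_i G + \partial^{(j)}_i G$. Fix any $h \in C^\infty_c(\mathcal{G})$ with $h \geq 0$. From the second equation of (\ref{splittedeq}) we have $\partial^{(j)}_3 G = 0$, hence $\partial_3 G = \tilde{\partial}_3 G$. Substituting into the left-hand side of (\ref{PDI}) yields
\begin{equation*}
(\langle \tilde{\partial}_3 G, h\rangle)^2 + 2\langle \tilde{\partial}_4 G, h\rangle \langle {\Lc}^2, h\rangle \;+\; 2\langle \partial^{(j)}_4 G, h\rangle \langle {\Lc}^2, h\rangle.
\end{equation*}
The first two terms combine to $\leq 0$ directly by the first inequality in (\ref{splittedeq}).

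What remains, and is the only nontrivial step, is to show $\langle \partial^{(j)}_4 G, h\rangle \leq 0$, i.e., that $\partial^{(j)}_4 G$ is a non-positive measure. By Lemma \ref{lem:split}, the vanishing of $\partial^{(j)}_3 G$ means that the jump normal $\nu_x$ to $J$ is parallel to $\pm e_4$ at ${\Hc}^1$-a.e.\ $x \in J$. The sign is pinned down by the monotonicity of $G$ in $x_4$ inherited from the $X_2$-calibration: Lemma \ref{partiallylip} applied with $v = (0,1)$ forces $G$ to be non-increasing in $x_4$, so every jump along $J$ is downward, hence $(\nu_x)_4 = -1$. Combined with $G_+ \geq G_-$ on $J$, this yields $\partial^{(j)}_4 G = -(G_+ - G_-){\Hc}^1 \res J \leq 0$. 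Since $h \geq 0$ and $\langle {\Lc}^2, h\rangle \geq 0$, the third term is $\leq 0$, completing (\ref{PDI}). The main (mild) obstacle is establishing the sign of $\partial^{(j)}_4 G$; this does not follow from the first inequality of (\ref{splittedeq}) alone, but from the monotonicity structure of $G$ supplied by the surrounding setting of Proposition \ref{prop:characterize}.
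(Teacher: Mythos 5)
Your argument is essentially right, and it supplies a converse that the paper never actually writes down (the observation is left as ``easily seen''), so there is no paper proof to compare against; what you give is the natural completion of Lemma~\ref{lem:split}. Two points deserve attention. First, your reduction is correct: since $\partial^{(j)}_3G=0$ forces $\langle\partial_3G,h\rangle=\langle\tilde{\partial}_3G,h\rangle$, the only obstruction to passing from \eqref{splittedeq} back to \eqref{PDI} is the sign of $\partial^{(j)}_4G$, and you are right that this does \emph{not} follow from \eqref{splittedeq} for an arbitrary $G\in BV_{loc}$: for $G=\uno_{\{x_4>0\}}$ one has $\tilde{\partial}G=0$ and $\partial^{(j)}_3G=0$, so \eqref{splittedeq} holds, while $\partial_4G=\Hc^1\res\{x_4=0\}\geq 0$ makes \eqref{PDI} fail. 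So the observation, read literally for all $BV_{loc}$ functions, needs exactly the extra input you identify, namely that in the setting of Proposition~\ref{prop:characterize} the jump of $G$ across $J$ is downward in $x_4$ (equivalently $b\geq 0$ in the notation of that proof); flagging this is a genuine sharpening of the paper's claim. Second, the one step you should repair is the citation: Lemma~\ref{partiallylip} only covers vectors of the form $(a,a^2/2)$, and $(0,1)$ is not of that form, so it cannot be ``applied with $v=(0,1)$'' as stated. The monotonicity of $G$ in $x_4$ comes instead from $X_4\uno_E\geq 0$ (Lemma~\ref{lem2}) together with the pointwise monotonicity of the calibrated representative along flow lines (Lemma~\ref{lem5}); equivalently, it is the $v_1\to 0$ endpoint of the assertion in the proof of Corollary~\ref{cor:lip} that $G$ is $\frac{v_1^2}{2v_2}$-partially Lipschitz along any unit $v=(v_1,v_2)$ with $v_2>0$. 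With that substitution your proof is complete.
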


\begin{proof}[Proof of Proposition \ref{prop:characterize}]

As we saw in Lemma \ref{lemma:esisteG}, every set having locally finite horizontal perimeter and constant horizontal normal equal to $X_2$ is the uppergraph of a function $G:\R^2 \to \overline{\R}$ of the variables $(x_3, x_4)$. Such $G$ is upper semi-continuous and by Lemma \ref{lem:semispazio} the closure of the level set at $-\infty$ is a (closed) half-space in the direction $x_4$. Such a $G$ will then be $L^1_{loc}$ on the open set $\mathcal{G}$ (see observation \ref{observation:semispazio}). 

We have moreover seen that $E$ has Lipschitz boundary (in the Euclidean sense) when we choose suitable coordinates (Lemma \ref{partiallylip}). This makes it a set of locally finite Euclidean perimeter; thus, since being of locally finite Euclidean perimeter is a notion which is independent of coordinates, going back to the original coordinates the function $G$ must be $BV_{loc}$ on $\mathcal{G}$.

\medskip

We thus need to prove that, for $G$ as in assumptions (i) and (ii), the set $E:=\{x\in\R^4\;:\;x_2>G(x_3,x_4)\}$ is $X_2$-monotone if and only if $G$ satisfies (\ref{PDI}).

\medskip

Regarding $X_2$-monotonicity, we split the derivatives $\partial_{x_j} \uno_E$ in the ``approximately continuous part'' and the ``jump part''.

\medskip

We can compute, on the ``approximately continuous part'' $(\R^3_{x_1, x_3, x_4} \setminus (\R_{x_1} \times J)) \times \R_{x_2}$, the horizontal normal to $\uno_E$ as follows: for any non-negative $h \in C^\infty_c(\R^4)$ it holds (from (\ref{eq:cont}))

\begin{equation}
 \label{eq:contpiece}
\left[\left(\partial_2 +x_1 \partial_3 + \dfrac{x^2_1}{2} \partial_4 \right)^{\text{cont}}\uno_E \right]\left(h\right) = $$ $$=
\int_{\R^3_{x_1, x_3, x_4} \setminus (\R_{x_1} \times J)} h\left(x_1, G(x_3,x_4), x_3, x_4\right) \left[1-x_1(\partial_3G)(x_3,x_4)-\dfrac{x^2_1}{2}(\partial_4 G)(x_3,x_4)\right].
\end{equation}

We now consider $J \times \R$ and (recall (\ref{eq:jump})) here we have $\partial_2 \uno_{E}=0$. Let further $(a,b)$ be the vector $(\partial_3 \uno_{E}, \partial_4 \uno_{E})$. Then 

\begin{equation}
 \label{eq:jumppiece}
\left(\partial_2 +x_1 \partial_3 + \dfrac{x^2_1}{2} \partial_4 \right)^{\text{(j)}}\uno_E  = x_1 (u^+ - u^-) a ({\Hc}^1\res J) + \frac{x_1^2}{2}(u^+ - u^-) b ({\Hc}^1\res J).
\end{equation}

Altogether, summing the two expressions in (\ref{eq:contpiece}) and (\ref{eq:jumppiece}), we get the expression for the $X_2$-derivative of $\uno_E$. The condition of $X_2$-monotonicity, i.e. that $\left[\left(\partial_2 +x_1 \partial_3 + \dfrac{x^2_1}{2} \partial_4 \right) \uno_E\right](h)$ be positive for any $h\geq 0$ and for any $x_1$ is fulfilled if and only if\footnote{Indeed the two measures in (\ref{eq:contpiece}) and (\ref{eq:jumppiece}) are mutually singular and the inequality $\left[\left(\partial_2 +x_1 \partial_3 + \dfrac{x^2_1}{2} \partial_4 \right) \uno_E\right](h)$ splits in the two corresponding inequalities for the two measures $\left(\partial_2 +x_1 \partial_3 + \dfrac{x^2_1}{2} \partial_4 \right)^{\text{cont}}\uno_E$ and $\left(\partial_2 +x_1 \partial_3 + \dfrac{x^2_1}{2} \partial_4 \right)^{\text{(j)}}\uno_E$. This is proved using bump functions as done in the proof of Lemma \ref{lem:split}.} for any $h \geq 0$, the polinomial in $x_1$ 

$$\int h \, d{\Lc}^2 -x_1\langle(\partial_3G)(x_3,x_4), h \rangle -\dfrac{x^2_1}{2}\langle(\partial_4 G)(x_3,x_4),h \rangle$$
is always positive and 
$$\left(x_1 a + \dfrac{x^2_1}{2} b \right) \geq 0.$$

The first is in turn equivalent, since such a polynomial has value $1$ for $x_1=0$, to the discriminant $(\partial_3G(h))^2+2\partial_4 G(h) \langle {\Lc}^2, h \rangle$ being nonpositive.

The second is satisfied if and only if $a=0, b\geq 0$. The vector $(a,b)$ is, on the other hand, the normal $\nu$ to the jump set $J \subset \R^2$ of $G$: so the $X_2$-monotonicity is equivalent to $J$ being a countably ${\Hc}^{1}$-rectifiable set with constant normal in the direction $x_4$, as in the assumptions.
\end{proof}

\medskip

We give now some explicit examples of sets having locally finite horizontal perimeter and constant horizontal normal equal to $X_2$ in our model of the Engel group. The first one is a generalization of Example \ref{example}.


\begin{example}\label{example2}
Let $g:\R\to\R$ be a non-increasing and upper semi-continuous function. 
Take  $K \in (0, \infty)$ and  a non-decreasing function $f:\R\to \R$ Lipschitz continuous with Lipschitz constant $\leq \frac{2}{K^2}$. The set $$E:=\{x\in\R^4 : x_2 > f(Kx_3 - x_4) + g(x_4)\}$$ is $X_2$-calibrated. We can easily see this fact as a consequence of Proposition \ref{prop:characterize} by computing
$$\left((\partial_3 G)(x_3, x_4)\right)^2 = K^2 (f'(Kx_3-x_4))^2, (\partial_4 G)(x_3, x_4) = -f'(Kx_3-x_4) + \partial_4 g (x_4)$$ 
so that $$\left((\partial_3 G)(x_3, x_4)\right)^2 + 2 (\partial_4 G)(x_3, x_4) = (K^2 f'(Kx_3-x_4) -2)f'(Kx_3-x_4) +2 \partial_4 g (x_4)$$
$$\leq  2 \partial_4 g (x_4) \leq 0$$
by the condition on the Lipschitz constant of $f$ and by the monotonicity of $g$. 
\end{example}

\begin{example}\label{example3}
The set $$C:=\left\{x\in\R^4 : x_2 >0, x_4 >0, x_2 > \frac{x_3^2}{2 x_4}\right\}$$ is $X_2$-calibrated. In this case we have $G=+\infty$ for $x_4 \leq 0$ and $G=\frac{x_3^2}{2 x_4}$ for $x_4 >0$.

Again, making use of Proposition \ref{prop:characterize}, we can compute, for $x_4 >0$:
$$\left((\partial_3 G)(x_3, x_4)\right)^2 + 2 (\partial_4 G)(x_3, x_4) = \frac{x_3^2}{x_4^2} - \frac{x_3^2}{x_4^2} =0.$$

We remark here that we get $0$ because $C$ is a sort of ``extreme case'', in the sense that, taken any $X_2$-calibrated set $E$, if $p \in \partial E$ then $p+C$ must lie in the interior of $E$. This fact will be discussed in detail and play an important role in a subsequent work.

Let us prove the previous claim. Assume that $E$ is $X_1$-invariant and $X_2$-monotone and let $p=(p_1,p_2, p_3, p_4) \in E$. Then the whole line $\ell=\{(p_1+a,p_2, p_3, p_4)\,:\;a \in \R\}$, belongs to $E$. Now the $X_2$-monotonicity means that we can flow from any point in $\ell$ for positive times $t$ along $X_2$ and we remain in $E$. Writing this down explicitly we get
$$\left(p_1+a,p_2, p_3, p_4\right) + \left(0, t, (p_1+a)t, \frac{(p_1+a)^2}{2} t \right) \in E \,\,\, \text{ for any } t \geq 0,  a \in \R.$$
By using $X_1$-invariance again we get that
$$\left(p_1,p_2, p_3, p_4\right) + \left(0, t, (p_1+a)t, \frac{(p_1+a)^2}{2} t \right) \in E \,\,\, \text{ for any } t \geq 0,  a \in \R.$$
The points $\left(t, (p_1+a)t, \frac{(p_1+a)^2}{2} t \right)$ with $t \geq 0$ and $a \in \R$ describe the surface $\{2x z = y^2: x >0, z>0\}$ in $\R^3$. This means that whenever $E$ contains $p$ then it must contain the surface $p + \{(x_1, x_2, x_3, x_4): x_2 = \frac{x_3^2}{2 x_4}, x_2 >0, x_4>0\}$. But recalling that $E$ is an upper-graph in the direction of the $x_2$-coordinate we get that $E$ contains the whole $p + \{(x_1, x_2, x_3, x_4): x_2 \geq \frac{x_3^2}{2 x_4}, x_2 >0, x_4>0\}$, which is exactly $p + C$.

\end{example}

\section{\textbf{Sets with normal $X_2$ as intrinsic graphs}}
\label{intrinsic}

In this section we look at the expression of $E$ (a set with constant normal $X_2$) as subgraph of a function when we use as ``graphing direction'' the flow lines of an horizontal vector field. The most natural choice would be to use the flow of $X_2$ as ``graphing direction'', as we are about to explain in the next subsection.

\subsection{Intrinsic graphs in the direction of the normal}
Let $W\subseteq\R^4$ be the set of points with second component equal to zero, 
$$W:=\{p\in \R^4: p_2=0\}.$$
One can show that $W$ is a subgroup of $\R^4$ with respect to the Engel structure.
Indeed, to see this, it is enough to observe the following two facts.
First, the vector space spanned by the vector fields $X_1,X_3,X_4$ form a Lie sub-algebra.
The second fact to notice is that the span of such vectors is tangent to $W$.
Thus $W$ is a subgroup whose Lie algebra has basis  $X_1,X_3,X_4$.
From the algebraic viewpoint, the subgroup $W$ is a complementary subgroup of the one-parameter subgroup tangent to the vector field $X_2$.
From the geometric viewpoint, for each $p\in\R^4$, the $3$-dimensional plane $W$ intersects the line $t\mapsto p\exp(t X_2)$ in one and only one point.
Indeed, by \eqref{Flow X2} the second coordinate of  $p\exp(t X_2)$  is 
$p_2+t$, which is zero when (and only when) $t=-p_2$.
We conclude that the space $\R^4$ can be parametrized by the following map
$$\Psi: W\times \R \to\R^4$$
$$ (p,t)\mapsto p\exp(t X_2).$$
Assume now that $E\subseteq\R^4$ is a $X_2$-calibrated set.
We plan to write $E$ as an upper graph of a function. By the definition of $X_2$-calibration there exists a map $p\mapsto T(p)$ from $\R^4$ to $\R$ such that
$$\{t : p\exp(t X_2))\in E\}=(T(p),+\infty).$$
Restricting such a map $T$ to $W$. We get that 
$$E= \{ \Psi(p,t) : p\in W, t> T(p)\}$$
$$\quad \quad\quad=\{\Psi(p,T(p)+t) : p\in W, t> 0\}.$$
$$\quad\quad \quad \quad \quad \quad \quad \quad \quad =\{(p_1, t, p_3+p_1 t, p_4 + p_1^2t/2) : p\in W, t> T(p)\}.$$

Let us study the map $p\mapsto T(p)$ from $W$ to $\R$, in the examples  Example \ref{example} where
 $E:=\{x\in\R^4 : x_2 + g(x_4)> 0\},$
with $g$ non-decreasing and upper semicontinuous. 
The value $T(p)$ is the lower value $T$ such  that
$$T+g(p_4+p_1^2 T)\geq 0,$$
since on $W$ we have $p_2=0$.
Restrict the map $T$ to $W\cap \{p_1=0\}$, so
$$T+g(p_4)=0.$$
In conclusion, $T:W\to \R$ is  as much non-regular as $g$ is. In particular, there are examples of non-continuous function $T$. We can summarize the last discussion in the following fact. 
\begin{theorem}
\label{thm:noncont}
There are examples of $X_2$-calibrated sets $E$ with the property that, when they are  written as upper graphs in the  direction of $X_2$, the function 
for which they are upper graphs is  not continuous.  
\end{theorem}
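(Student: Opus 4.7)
The plan is to produce an explicit $X_2$-calibrated set from the family in Example~\ref{example} for which the graphing function $T:W\to\R$ fails to be continuous. The hypotheses on $g$ in Example~\ref{example} (non-increasing and upper semi-continuous) do not force continuity, so I would simply take a step function such as
\begin{equation*}
g(s)=\begin{cases} 1 & \text{if } s\leq 0,\\ 0 & \text{if } s>0,\end{cases}
\end{equation*}
which is non-increasing and upper semi-continuous but has a jump at $s=0$. By Example~\ref{example}, the corresponding set $E=\{x\in\R^4 : x_2>g(x_4)\}$ is $X_2$-calibrated.

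Next I would compute the associated function $T:W\to\R$ that expresses $E$ as an $X_2$-upper-graph over the complementary subgroup $W=\{p_2=0\}$. Using the flow formula~\eqref{Flow X2}, for $p=(p_1,0,p_3,p_4)\in W$ one has $p\exp(tX_2)=(p_1,t,p_3+p_1 t,p_4+p_1^2 t/2)$, so membership in $E$ is equivalent to the scalar inequality $t>g(p_4+p_1^2 t/2)$, and $T(p)$ is the infimum of the $t$'s satisfying it. On the two-dimensional slice $W\cap\{p_1=0\}$ (parametrized by $p_3$ and $p_4$), the inequality collapses to $t>g(p_4)$, and hence $T(p)=g(p_4)$ on that slice. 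With our chosen $g$, the function $T$ is therefore discontinuous at every point of the slice with $p_4=0$, which proves the claim.

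There is no real obstacle: the heavy lifting (verifying that $\{x_2>g(x_4)\}$ satisfies both $X_1$-invariance and $X_2$-monotonicity) was already carried out in Example~\ref{example}. The content of the theorem is then the simple observation that on the affine plane $\{p_1=0\}$ the flow of $X_2$ fixes the $p_4$-coordinate, so $T$ directly inherits any discontinuity built into $g$.
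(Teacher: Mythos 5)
Your proposal is correct and follows essentially the same route as the paper: the paper likewise takes the sets of Example~\ref{example}, restricts the graphing function $T$ to the slice $W\cap\{p_1=0\}$ where the flow of $X_2$ fixes $p_4$, observes that $T(p)=g(p_4)$ there, and concludes that $T$ inherits any discontinuity of $g$. Your only addition is to make the discontinuous choice of $g$ explicit, which the paper leaves implicit.
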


\subsection{Graphs in other horizontal directions}

For a set $E$ with constant normal $X_2$, the previous example has shown that there can be a lack of continuity for the intrinsic graph representing the boundary of $E$ when we choose the flow lines of $X_2$ as graphing directions.

We might however still look at what happens when the boundary of $E$ is represented as an intrinsic graph using different horizontal flow lines as graphing direction: namely let us observe the flow lines of $aX_1 + X_2$ for $a>0$.

First of all we need to write down, analogously to what was done in (\ref{ODE flow 2}), the flow of $aX_1 + X_2$ in the model of the Engel group considered so far. What we get is that the flow line starting at $(p_1, p_2, p_3, p_4)$ is

\begin{equation}
\label{flowaX1}
\left( p_1+at, p_2 + t, p_3 + p_1 t + \frac{a}{2} t^2 , p_4 + \frac{p_1^2}{2}t + \frac{a p_1}{2} t^2 + \frac{a^2}{6} t^3 \right).
\end{equation}

We are going to show now that this intrinsic graph might fail to be Lipschitz. To see this, we will consider the set in Example \ref{example3}.

\begin{theorem}
\label{theorem:notlipshitz}
When the set $C:=\left\{x\in\R^4 : x_2 >0, x_4 >0, x_2 > \frac{x_3^2}{2 x_4}\right\}$ is represented as intrinsic upper graph in any horizontal direction $aX_1 + X_2$, the function for which it is upper graph is not Lipschitz.
\end{theorem}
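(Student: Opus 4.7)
The plan is, for each $a\in\R$, to exhibit a sequence of points on a fixed complementary subgroup approaching the origin along which the Euclidean difference-quotient of the graphing function diverges. I take the complementary subgroup $W=\{x_2=0\}$, which works for every horizontal direction $aX_1+X_2$: by \eqref{flowaX1} the second coordinate of $w\cdot\exp(t(aX_1+X_2))$ equals $w_2+t$, so any $p\in\R^4$ factors uniquely as $p=w\cdot\exp(p_2(aX_1+X_2))$ with $w\in W$. The graphing function $T_a:W\to\overline\R$ is then $T_a(w):=\inf\{t\in\R: w\cdot\exp(t(aX_1+X_2))\in C\}$, and the claim is that $T_a$ cannot be Euclidean Lipschitz.

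The key step is to pick the test points $p_\eta:=(0,0,\eta,0)\in W$ for $\eta>0$ and compare $T_a(p_\eta)$ with $T_a(0)$ using the explicit flow \eqref{flowaX1} and the description $C=\{x_2>x_3^2/(2x_4),\ x_4>0\}$. For $a\neq 0$ the flow from the origin is $(at,t,\tfrac{a}{2}t^2,\tfrac{a^2}{6}t^3)$ and the $C$-inequality reduces to $t>\tfrac{3}{4}t$, which holds for all $t>0$, so $T_a(0)=0$. The flow from $p_\eta$ is $(at,t,\eta+\tfrac{a}{2}t^2,\tfrac{a^2}{6}t^3)$, and the $C$-condition reduces to the polynomial inequality $a^2 t^4>3(\eta+\tfrac{a}{2}t^2)^2$, whose smallest positive root has the form $T_a(p_\eta)=K_a\sqrt{\eta}$ for a positive constant $K_a$ depending only on $a$. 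Since $|p_\eta-0|=\eta$ in the Euclidean sense, the quotient
\[
\frac{|T_a(p_\eta)-T_a(0)|}{|p_\eta-0|}=\frac{K_a}{\sqrt{\eta}}\longrightarrow +\infty \quad\text{as }\eta\to 0^+,
\]
so $T_a$ is not Euclidean Lipschitz. For $a=0$ the fourth component of the flow from $p_\eta$ is identically zero, so the flow never meets $C$ and $T_0(p_\eta)=+\infty$; combined with the immediate verification that $T_0((0,0,0,\eta))=0$, this already yields discontinuity (hence non-Lipschitzness) of $T_0$ near the origin.

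The only subtle point---which I regard as the main obstacle---is the choice of perturbation direction. The coordinate $x_3$ lies in the second stratum of the grading, so $(0,0,\eta,0)$ has parabolic Carnot weight $\sqrt\eta$; it is precisely this parabolic scaling of the "extreme" set $C$ against the linear behaviour of the Euclidean norm that produces the $1/\sqrt\eta$ blow-up of the Lipschitz quotient. Perturbing instead in $x_1$ or $x_4$ would leave $T_a$ bounded (in fact zero) and would fail to witness the non-Lipschitzness; the statement is inherently about the gap between intrinsic and Euclidean metrics emphasized in the paragraph preceding the theorem.
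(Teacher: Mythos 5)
Your argument is correct and follows essentially the same route as the paper: both proofs compute the graphing function $T_a$ explicitly along a coordinate line in $W=\{x_2=0\}$ through the origin and exhibit sublinear growth there; the paper perturbs in $x_4$ with $p_4<0$, obtaining $T_a=\sqrt[3]{24|p_4|/a^2}$, while you perturb in $x_3$, obtaining $T_a(0,0,\eta,0)=K_a\sqrt{\eta}$, and either modulus rules out Euclidean Lipschitz continuity (your separate treatment of $a=0$ is a small extra care the paper leaves implicit). One correction to your closing remark: the claim that perturbing in $x_4$ ``would leave $T_a$ bounded (in fact zero)'' is false for $p_4<0$ --- that is exactly the perturbation the paper uses, and there $T_a$ grows like $|p_4|^{1/3}$, which equally witnesses the failure of Lipschitz continuity; $T_a$ vanishes only on the side $p_4\geq 0$.
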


\begin{proof}[Proof of Theorem \ref{theorem:notlipshitz}]
The intrinsic function $T(p)$ yielding the upper graph is a function $T:W\to \R$, where $W:=\{p\in \R^4: p_2=0\}$ as before. The value $T(p)$ is the infimum of the times $t$ for which the flow line starting at $p$ is inside $E$.

Let us restrict the attention to points in $W$ with $p_1=p_3=0$. The flow lines are then $\left( at, t,  \frac{a}{2} t^2 , p_4 + \frac{a^2}{6} t^3 \right)$ and we must see when the flow line enters the set $C$. The value $T(p)$ for $p_4 \geq 0$ is clearly $0$, while for $p_4<0$ it is the solution of the following equation in $t$
$$2 t \left(p_4 + \frac{a^2}{6} t^3\right) =  \frac{a^2}{4} t^4\, ,$$
with the constraint that $p_4 + \frac{a^2}{6} t^3>0$.

Solving this equation we get $\frac{a^2}{12}t^3 = 2|p_4|$, i.e. $t= \sqrt[3]{\frac{24|p_4|}{a^2}}$, which fulfils the constraint $p_4 + \frac{a^2}{6} t^3>0$. So we have that, restricting to $p_1=p_3=0$ in $W$, the function for which $C$ is upper-graph is $\sqrt[3]{\frac{24|p_4|}{a^2}}$ for $p_4<0$ and identically $0$ for $p_4 \geq 0$, so it is not Lipschitz continuous.
\end{proof}

It is therefore necessary to use non-horizontal directions as ``graphing direction'' (as done in corollary \ref{cor:lip}) in order to see the Lipschitz continuity of the function describing the boundary of a set with constant horizontal normal. 

The previous proof leaves however still open the possibility for the intrinsic graph in the direction $aX_1 + X_2$ to be Lipschitz with respect to the intrinsic Carnot-Caratheodory distance on $W$ (in particular H\"older continuous for the Euclidean distance), as we are about to discuss.

\begin{lem}
\label{lem:holder}
When a $X_2$-calibrated set $E$ is represented as intrinsic upper graph in any horizontal direction $aX_1 + X_2$, with $a \neq 0$, the function for which it is upper graph is H\"older-continuous.
\end{lem}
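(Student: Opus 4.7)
The plan is to extract H\"older continuity directly from the ``extreme cone'' property established in Example \ref{example3}: for any $X_2$-calibrated set $E$ and any $p\in \overline E$, the Euclidean translate $p+C$ of the cone $C=\{x\in\R^4 : x_2>0,\,x_4>0,\,2x_2x_4>x_3^2\}$ lies in $\overline E$. By left invariance of the class of $X_2$-calibrated sets, it will suffice to estimate $T$ near a single fixed point of $W=\{p:p_2=0\}$, which after a left translation I may take to be the origin, so that $T(0)=0$ and $C\subseteq \overline E$; the global bound will then follow by transporting this local estimate.

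To bound $T(w')$ from above at $w'=(w'_1,0,w'_3,w'_4)\in W$, I will find the smallest $t'>0$ for which the flow point $\phi^{t'}_{Y}(w')$ given by \eqref{flowaX1} with $Y=aX_1+X_2$ enters $C$: any such $t'$ furnishes $T(w')\le t'$. Membership of the flow point in $C$ unpacks to the strict positivity of
\[
w'_4+\tfrac{(w'_1)^2}{2}t'+\tfrac{aw'_1}{2}(t')^2+\tfrac{a^2}{6}(t')^3,
\]
together with the quadratic-type inequality coming from $2x_2x_4>x_3^2$, both polynomial in $t'$ with coefficients polynomial in $(w'_1,w'_3,w'_4)$ and $a$. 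A Carnot-type scaling argument (assigning the weights $1,2,3$ to $w'_1,w'_3,w'_4$, as dictated by the dilation-homogeneity of the defining inequality of $C$) shows that the smallest admissible $t'$ obeys
\[
t'\le K_a\,\max\bigl\{|w'_1|,\,|w'_3|^{1/2},\,|w'_4|^{1/3}\bigr\}\le C\,|w'|_{\mathrm{Eucl}}^{1/3}
\]
on any bounded neighbourhood of the origin. The extremal case $w'_1=w'_3=0$, $w'_4<0$ already appeared in the proof of Theorem \ref{theorem:notlipshitz} and produced the exponent $1/3$, which governs the general case.

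For the matching lower bound, I will apply the identical argument to the complement $E^c$, which is $X_1$-invariant and $(-X_2)$-monotone, hence calibrated for $-X_2$; the corresponding reflected cone $-C$ then lies in $\overline{E^c}$ translated at each boundary point, yielding $T(w')\ge -C'|w'|^{1/3}$. Combining the two bounds gives H\"older continuity at the origin with exponent $1/3$. The main obstacle I anticipate is the uniformity of the constants $K_a,C,C'$ as the base point varies in $W$: this is needed to promote the pointwise H\"older estimate to a genuine H\"older continuity statement, and I expect it to follow from the fact that both the cone $C$ and the polynomial flow formula \eqref{flowaX1} interact with left translation and Carnot dilations in a controlled, scale-invariant way, so that a compactness/covering argument on $W$ delivers a uniform constant on any bounded set.
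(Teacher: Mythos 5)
Your proposal follows essentially the same route as the paper: reduce to the extreme cone $C$ of Example \ref{example3} via the containment $p+C\subseteq E$, compute when the $(aX_1+X_2)$-flow line enters $C$, and bound the entry time by $K\max\{|w_1'|,\sqrt{|w_3'|},\sqrt[3]{|w_4'|}\}$ (your weighted-scaling verification of the quartic inequality is just a repackaging of the paper's two-case substitution $t=K\max\{\sqrt{|p_3|},\sqrt[3]{|p_4|}\}$). You are in fact somewhat more careful than the paper, which only treats $p_1=0$ and only states the upper bound on $T$, leaving the matching lower bound (your reflected-cone argument on the complement) and the uniformity of constants implicit.
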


\begin{proof}[Proof of Lemma \ref{lem:holder}]
We know that, whenever $p \in \partial E$, then the set $p+C$ is contained in the interior of $E$, where $C$ is the set in Example \ref{example3}. In order to prove the theorem it therefore suffices to show that $C$ itself is, in the direction $aX_1 + X_2$, the upper-graph of a function that is H\"older-continuous at the origin. Analogously to what was done in proof of Theorem \ref{theorem:notlipshitz}, the intrinsic graph is the lowest value of $t$ such that
$$\frac{a^2}{12}t^4-p_3 a t^2 + 2 p_4 t - p_3^2 \geq 0\, ,$$
for $p_4<0$ with the constraint that $p_4 + \frac{a^2}{6} t^3>0$.





At $t=0$ the polynomial is clearly negative. Let us substitute the value $t=K \max\{\sqrt{|p_3|}, \sqrt[3]{|p_4|}\}$, for some constant $K>0$. 

Then we get, in the case $\sqrt{|p_3|}> \sqrt[3]{|p_4|}$,
$$K^4 \frac{a^2}{12}|p_3|^2-K^2 p_3 a |p_3| - 2 K |p_4| |p_3|^{\frac{1}{2}} - p_3^2 \geq  K^4 \frac{a^2}{12}|p_3|^2-\left[K^2  a |p_3|^2 + 2 K |p_3|^{\frac{3}{2}} |p_3|^{\frac{1}{2}} + |p_3|^2\right], $$
and this expression is positive if $K$ is chosen suitably large.

In the remaining case $\sqrt{|p_3|} \leq \sqrt[3]{|p_4|}$ we obtain 
$$K^4 \frac{a^2}{12}|p_4|^{\frac{4}{3}}-K^2 p_3 a |p_4|^{\frac{2}{3}} - 2 K |p_4| |p_4|^{\frac{1}{3}} - p_3^2 \geq  K^4 \frac{a^2}{12}|p_4|^{\frac{4}{3}}-\left[K^2  a |p_4|^{\frac{4}{3}} + 2 K |p_4|^{\frac{4}{3}}  + |p_4|^{\frac{4}{3}}\right], $$
and again this is positive if $K$ is chosen suitably large.

Altogether we have shown that for any $p \in W$ there exists a value $T(p)$ yielding the function describing $C$ as intrinsic upper graph in the direction $aX_1 + X_2$ and $T(p)$ is below $K \max\{\sqrt{|p_3|}, \sqrt[3]{|p_4|}\}$. In other words $T(p)$ at the origin has at least the H\"older regularity of the function $K \max\{\sqrt{|p_3|}, \sqrt[3]{|p_4|}\}$. 
\end{proof}

As a corollary of the previous proof we can state

\begin{theorem}
\label{theorem:intrinsiclipshitz}
When a $X_2$-calibrated set $E$ is represented as intrinsic upper graph in any horizontal direction $aX_1 + X_2$, with $a \neq 0$, the function for which it is upper graph is intrinsically Lipschitz-continuous.
\end{theorem}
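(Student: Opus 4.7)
The plan is to promote the H\"older estimate at the origin from Lemma \ref{lem:holder} to an intrinsic Lipschitz estimate on all of $W$, exploiting the translation invariance of the cone inclusion $p + C \subseteq E$ for $p \in \partial E$ established in Example \ref{example3}.

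First, I would extend the polynomial computation in the proof of Lemma \ref{lem:holder} to all points $p = (p_1, 0, p_3, p_4) \in W$, not only those with $p_1 = 0$. Plugging the full flow formula (\ref{flowaX1}) into the defining inequalities $2 x_2 x_4 > x_3^2$ and $x_4 > 0$ of $C$ yields a polynomial inequality in $t$. The key point is that, by the Ball--Box Theorem in the Engel group, $d_\G(0, p)$ is comparable to the homogeneous norm $|p_1| + \sqrt{|p_3|} + \sqrt[3]{|p_4|}$, and each monomial in the polynomial scales consistently under the Carnot dilations of $\G$. Substituting $t = K\bigl(|p_1| + \sqrt{|p_3|} + \sqrt[3]{|p_4|}\bigr)$, the scaling reduces the verification to positivity on a compact set of ``unit'' points, yielding a uniform constant $K$ independent of $p$; this gives $T(p) \leq K \cdot d_\G(0, p)$ at the origin.

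Second, the cone inclusion is translation-invariant, so the same estimate is available at every boundary point rather than only at the origin. Since $d_\G$ is left-invariant and $W$ is a subgroup, for any $q \in W$ on the graph and any $q' \in W$ we obtain
$$T(q \cdot q') - T(q) \leq K \cdot d_\G(q, q \cdot q') = K \cdot d_\G(0, q').$$
Swapping the roles of $q$ and $q \cdot q'$, using that the complement $E^c$ satisfies an analogous cone condition in the opposite direction (obtainable by reversing the orientation in the argument of Example \ref{example3}), yields the two-sided intrinsic Lipschitz bound on $T$ with respect to the restriction of the CC-distance of $\G$ to $W$. H\"older continuity for the Euclidean distance is then automatic from the comparison of the two distances.

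The main obstacle will be the polynomial computation in the first step: with $p_1 \neq 0$ the flow equation (\ref{flowaX1}) introduces mixed cubic terms in $t$ and $p_1$, and one has to track the constraint $p_4 + \tfrac{p_1^2}{2}t + \tfrac{a p_1}{2} t^2 + \tfrac{a^2}{6} t^3 > 0$ on positivity of the fourth coordinate. Nevertheless the Carnot-dilation homogeneity reduces this estimate to a finite compactness check, so no genuinely new ideas beyond those of Lemma \ref{lem:holder} should be required.
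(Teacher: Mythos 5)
Your proposal follows essentially the same route as the paper: reduce to the model cone $C$ of Example \ref{example3} via the translation-invariant inclusion $p+C\subseteq E$ at boundary points, bound the intrinsic graph of $C$ in the direction $aX_1+X_2$ by a multiple of the homogeneous norm (the paper uses $K\max\{\sqrt{|p_3|},\sqrt[3]{|p_4|}\}$ from Lemma \ref{lem:holder}), and conclude by the ball-box theorem. Your two refinements --- handling $p_1\neq 0$ by the dilation-homogeneity of $C$ rather than by the explicit polynomial estimate at $p_1=0$, and making the two-sided cone condition via the complement explicit --- fill in details the paper leaves implicit but do not change the argument.
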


\begin{proof}[Proof of Theorem \ref{theorem:intrinsiclipshitz}]
 In order to prove the theorem we must prove that whenever $p \in \partial E$ then there exists an intrinsic cone at $p$ whose interior part is all contained in $E$. In the proof of the previous lemma we have seen that the uppergraph of $K \max\{\sqrt{|p_3|}, \sqrt[3]{|p_4|}\}$ is all contained in $C$, and we know that $p+C$ is all contained in $E$.

Now the upper  graph of $K \max\{\sqrt{|p_3|}, \sqrt[3]{|p_4|}\}$ contains an intrinsic cone with respect to the Carnot-Caratheodory distance on $W$, thanks to the ball-box Theorem (see \cite{LeDonne-lectures}).
\end{proof}

\section{\textbf{Regularity in filiform groups}}\label{filif-sec}

A stratified group $\G$ is said to be a {\em filiform group} if the strata $V_j$ of the stratification
$${\rm Lie}(\G)=V_1\oplus \ldots \oplus V_s$$
 of the  Lie algebra Lie$(\G)$ are such that $\dim V_1=2$ and $\dim V_j=1$, for $j=2,\ldots , s$. Here $s$ is the step of the group.
 
 One can easily show that there exists a basis $X_0, \ldots X_s$ of  Lie$(\G)$ by vectors with the following property:
 $X_0, X_1\in V_1$, $X_j\in V_j$, for $j=2,\ldots , s$, and 
 \begin{equation}
 \label{filif}
 [X_0,X_{j-1}]=X_j,\qquad\text{ for }j=2,\ldots , s.
 \end{equation} In general there might be other non-null brackets of   elements of this basis.
In \cite{Vergne}, Vergne gave a classification of all stratified groups.  
In fact, in \cite[Corollary 1, page 93]{Vergne}, Vergne showed that, in the case the step $s$ is even (so the dimension of the group is odd), then there is only one stratified group of step $s$ and  it admits a basis for which the   brackets   are all null, except those in \eqref{filif}. In case $s$ is odd (and the dimension is even), then there are only two different filiform groups: one where, a part from \eqref{filif},  all other brackets are null and a second one where the only other non-null bracket relation is
$$[X_l, X_{s-l}]=(-1)^lX_s, \qquad \text{ for } l=1, \ldots, s-1.$$
We refer to this two groups as the
filiform group  of the {\em first kind} and  the
filiform group   of the {\em second kind}, respectively.

We shall show how the argument for proving the regularity
of constant-normal sets in the Engel group
can be extended to any filiform group of the first kind.
\begin{theorem}\label{filif-thm}
Let $\G$ be any filiform group of the first kind. Let $E\subset \G$ be a set with horizontal constant normal. Then $E$ is a Lipschitz domain.
\end{theorem}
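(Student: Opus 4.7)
The plan is to transpose the Engel argument: first use a Lie algebra automorphism to reduce the normal to one of two canonical positions, then invoke Proposition \ref{AKL prop} to produce $s+1=\dim\G$ linearly independent monotone directions, and finally invoke Proposition \ref{prop1}. Fix a basis $X_0,X_1,\ldots,X_s$ adapted to the stratification and satisfying \eqref{filif}; in the first kind all other brackets among basis elements vanish, so in particular $[X_i,X_j]=0$ whenever $i,j\geq 1$. The horizontal normal has the form $\alpha X_0+\beta X_1$. If $\alpha\neq 0$ I set $\psi(X_0):=\alpha X_0+\beta X_1$, $\psi(X_1):=X_1$, and $\psi(X_j):=\alpha^{j-1}X_j$ for $j\geq 2$; using $[X_1,X_k]=0$ for $k\geq 2$ one verifies that $\psi$ extends to a Lie algebra automorphism, and applying its group exponentiation to $E$ reduces the normal to $X_0$. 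If instead $\alpha=0$, the normal is already $X_1$ up to scaling. So only two cases remain.

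\emph{Case A (normal $X_0$).} Here $X_0\uno_E\geq 0$ and $X_1\uno_E=0$. I claim by induction on $j$ that $X_j\uno_E=0$ for every $j=1,\ldots,s$. Assuming $X_j\uno_E=0$ and applying Proposition \ref{AKL prop} with the pair $(tX_j,X_0)$, the vanishing of $[X_j,X_{j+1}]$ in the first kind truncates the exponential series to
\[ \mathrm{Ad}_{\exp(tX_j)}(X_0)=X_0-tX_{j+1},\qquad t\in\R. \]
Each such vector is monotone, and letting $t\to\pm\infty$ forces $X_{j+1}\uno_E=0$. Thus all $s+1$ basis vectors $X_0,X_1,\ldots,X_s$ are monotone.

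\emph{Case B (normal $X_1$).} Here $X_1\uno_E\geq 0$ and $X_0\uno_E=0$. Since $\mathrm{ad}_{X_0}^k(X_1)=X_{k+1}$ for $0\leq k\leq s-1$ and vanishes for $k\geq s$, Proposition \ref{AKL prop} applied to $(tX_0,X_1)$ yields the one-parameter family of monotone directions
\[ Z_t:=X_1+tX_2+\frac{t^2}{2!}X_3+\cdots+\frac{t^{s-1}}{(s-1)!}X_s,\qquad t\in\R. \]
For any $s$ distinct reals $t_1,\ldots,t_s$, the coordinate matrix of $Z_{t_1},\ldots,Z_{t_s}$ in the basis $(X_1,\ldots,X_s)$ is a rescaled Vandermonde matrix, hence nonsingular. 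Together with $X_0$ (monotone because $X_0\uno_E=0$), this provides $s+1$ linearly independent monotone directions. In either case Proposition \ref{prop1} then applies and yields that $E$ is equivalent to a Lipschitz Euclidean domain. The only real obstacle is that the $\mathrm{ad}$-expansions must truncate cleanly, which is precisely where the ``first kind'' hypothesis is essential: the extra bracket $[X_l,X_{s-l}]=(-1)^l X_s$ present in second-kind filiform groups would introduce cross-terms in $X_s$ both in the induction of Case A and in the polynomial $Z_t$ of Case B, breaking the Vandermonde structure and requiring a substantially different approach.
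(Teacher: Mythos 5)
Your two case analyses are sound and essentially coincide with the paper's own proof: Case B is the paper's Vandermonde argument (and you rightly note that the $s$ vectors $Z_{t_1},\ldots,Z_{t_s}$ must be supplemented by $X_0$ to obtain a full basis before invoking Proposition \ref{prop1}), while Case A is the paper's iterated-commutator argument. The gap is in the reduction to those two cases. You use the automorphism $\psi$ to normalize the \emph{normal} direction, but what Cases A and B actually require is a normalization of the \emph{invariant} direction $Y$ (the horizontal vector orthogonal to the normal, for which $Y\uno_E=0$). After pulling $E$ back by your $\psi$, the new normal is indeed $X_0$, but the new invariant direction is $\psi^{-1}(Y)$, which is parallel to $X_1$ only if $Y$ was already parallel to $X_1$. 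In general it equals $\gamma X_0+\delta X_1$ with $\gamma\neq 0$ and $\delta\neq 0$, whence $\delta X_1\uno_E=\bigl(\psi^{-1}(Y)-\gamma X_0\bigr)\uno_E=-\gamma\, X_0\uno_E\neq 0$: the hypothesis $X_1\uno_E=0$ of your Case A fails, and the base step of your induction has nothing to start from. Symmetrically, when $\alpha=0$ the hypothesis $X_0\uno_E=0$ of your Case B holds only when the invariant line is exactly $\R X_0$. These bad configurations genuinely occur: the non-half-space examples have invariant line $\R X_0$, and composing with a graded automorphism mixing $X_0$ and $X_1$ produces constant-normal sets (for a suitable scalar product) whose invariant line is a generic $\R(X_0+cX_1)$ with $c\neq 0$.

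The repair is exactly the filiform analogue of Corollary \ref{dicotomia}: the dichotomy must be taken on whether $\R Y=\R X_1$ or not. If $\R Y=\R X_1$, then $X_1\uno_E=0$, and subtracting the invariant component from the normal gives $X_0\uno_E\geq 0$ after an admissible sign change of the basis; this is your Case A. If $\R Y\neq\R X_1$, write $Y=\alpha X_0+\beta X_1$ with $\alpha\neq 0$ and apply your very automorphism with \emph{these} coefficients, i.e.\ $\psi(X_0):=Y$; in the new basis $X_0\uno_E=0$, and subtracting the invariant component from the normal gives $X_1\uno_E\geq 0$; this is your Case B. With the reduction corrected in this way, the remainder of your argument goes through and agrees with the paper's proof.
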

\proof
Let   $X_0, \ldots X_s$ be a basis of  Lie$(\G)$ satisfying  \eqref{filif}. As for the Engel group, we can assume that either $X_0\uno_E=0$ or  $X_1\uno_E=0$.
Consider first the case  $X_0\uno_E=0$.
By Proposition \ref{AKL prop}, 
the vector ${\rm Ad}_{\exp(tX_0)}X_1$ is a monotone direction.
Explicitly, by  \eqref{filif},  we have
$${\rm Ad}_{\exp(tX_0)}X_1=e^{{\rm ad} (tX_0)}X_1=X_1+tX_2+\dfrac{t^2}{2}X_3+\ldots +\dfrac{t^{s-1}}{s-1}X_s.$$
Pick $s$ distinct numbers $t_1,\ldots, t_s$. Consider the vectors $$Y_j={\rm Ad}_{\exp(t_jX_0)}X_1, \quad \text{ for } j=1,\ldots, s.$$
 We claim that the vectors $Y_j$ are linearly independent. Indeed, it is enough to show that the matrix
 \begin{equation*}
 \begin{pmatrix}
	1&t_1&\dfrac{t_1^2}{2}&\ldots&\dfrac{t_1^{s-1}}{s-1} \\
\vdots &\vdots&\vdots&\ddots&\vdots\\
	1&t_s&\dfrac{t_s^2}{2}&\ldots&\dfrac{t_s^{s-1}}{s-1}
     \end{pmatrix}.\end{equation*}
has full rank.
Equivalently we need 
 \begin{equation*}
\det \begin{pmatrix}
	1&t_1&t_1^2&\ldots& {t_1^{s-1}}{ } \\
\vdots &\vdots&\vdots&\ddots&\vdots\\
	1&t_s&t_s^2&\ldots& {t_s^{s-1}}{ }
     \end{pmatrix}\neq 0.\end{equation*}
We observe that we are considering a  Vandermonde Matrix. Hence
such a determinant is $\Pi _{1\leq i<j<s}(t_i-t_j)$, which is nonzero, since the $t_j$'s have been chosen to be distinct.
Since we found a basis of monotone directions, as for the Engel group, we conclude that the set $E$ is (equivalent) to a Lipschitz domain.

Let us consider now the case  $X_1\uno_E=0$.
Applying Proposition \ref{AKL prop},   we get that the vector field 
$$ {\rm Ad}_{\exp(tX_1)}X_0 =X_0-tX_2$$
is a monotone direction, for all $t\in\R$. Thus $X_2\uno_E=0$.
Iterating the use of  Proposition \ref{AKL prop} and using   \eqref{filif}, we get that all vectors $X_2, \ldots, X_s$ are invariant directions. Hence $E$ is  half-space.
\qed

   \bibliography{general_bibliography}
\bibliographystyle{amsalpha}

\end{document}